\newtheorem{thm}{Theorem}[section]
\newtheorem{cor}[thm]{Corollary}
\newtheorem{prop}[thm]{Proposition}
\newtheorem{lem}[thm]{Lemma}
\newtheorem{claim}[thm]{Claim}
\newtheorem*{openproblem*}{Problem}
\newtheorem*{quest*}{Question}
\newtheorem*{problem*}{Problem}
\theoremstyle{definition}
\newtheorem{defn}[thm]{Definition}
\newtheorem{con}[thm]{Construction}
\newtheorem{exmp}[thm]{Example}
\newtheorem*{cond*}{Condition}
\theoremstyle{remark}
\newtheorem{rem}[thm]{Remark}
\newcommand{\bR}{\mathbb{R}}
\newcommand{\bZ}{\mathbb{Z}}
\newcommand{\floor}[1]{\lfloor #1 \rfloor}
\newcommand\Diff{\mathrm{Diff}}
\newcommand\Cont{\mathrm{Cont}}
\newcommand\BDiff{\mathrm{BDiff}}
\newcommand\BCont{\mathrm{BCont}}
\newcommand\dDiff{\mathrm{Diff}^{\delta}}
\newcommand\dCont{\mathrm{Cont}^{\delta}}
\newcommand\BdDiff{\mathrm{BDiff}^{\delta}}
\newcommand\BdCont{\mathrm{BCont}^{\delta}}
\let\c@equation\c@thm
\numberwithin{equation}{section}
\title[Thurston's fragmentation]{Thurston's fragmentation and c-principles}
\author{Sam Nariman}
\email{snariman@purdue.edu}
\address{Department of Mathematics\\
  Purdue University\\
150 N. University Street\\
West Lafayette, IN 47907-2067\\
}
\dedicatory{To the memory of John Mather}
\begin{document}

\begin{abstract}
In this paper, we generalize the original idea of Thurston for the so-called Mather-Thurston's theorem for foliated bundles to prove new variants of this theorem for PL homeomorphisms and contactormorphisms. These versions answer questions posed by Gelfand -Fuks (\cite[Section 5]{MR0339195}) and Greenberg (\cite{MR1200422}) on PL foliations and Rybicki  (\cite[Section 11]{MR2729009}) on contactomorphisms. The interesting point about the original Thurston's technique compared to the better-known Segal-McDuff's proof of the Mather-Thurston theorem is that it gives a {\it compactly supported} c-principle theorem without knowing the relevant local statement on open balls. In the appendix, we show that Thurston's fragmentation implies the non-abelian Poincare duality theorem and its generalization using blob complexes (\cite[Theorem 7.3.1]{MR2978449}).
\end{abstract}
\maketitle

\section{Introduction}
Thurston (\cite{thurston1974foliations}) found a remarkable relation between the identity component of diffeomorphism groups of an $n$-dimensional compact manifold $M$ and ``singular" foliations, induced by Haefliger structures (see \cite{haefliger1971homotopy}), on $M$. His theorem can be thought of as a homology h-principle theorem or a c-principle theorem (see \cite[Theorem 3.3]{fuks1974quillenization}) between the space of genuine foliations on $M$-bundles that are transverse to the fiber $M$ and the space of singular foliations on $M$-bundles whose normal bundles are isomorphic to the vertical tangent bundle. 

More concretely, a Haefliger structure $\mathcal{H}$ comes with the data of a vector bundle $\nu\mathcal{H}$ (see \cite{haefliger1971homotopy}) which is called the normal bundle of the Haefliger structure $\mathcal{H}$ and a germ of a foliation near the zero section of $\nu\mathcal{H}$ that is transverse to the fibers but not necessarily to the zero section. So the intersection of this germ of foliation with the zero section will be described by a ``singular" foliation or a Haefliger structure. 

For a closed manifold $N$ we consider a Haefliger structure on a product bundle $\pi\colon N\times M\to N$ whose normal bundle is isomorphic to the vertical tangent bundle of $\pi$. One can ask whether this Haefliger structure is homotopic to a genuine foliation on $N\times M$ that is transverse to the fibers. In other words, whether an h-principle theorem holds for this formal data. This is not in general true, however, Thurston's theorem in (\cite{thurston1974foliations}) implies that there exists a ``cobordism" (hence a c-principle) of a trivial $M$-bundle with a Haefliger structure whose normal bundle is isomorphic to the vertical tangent bundle that starts from the bundle $\pi$ and ends with a bundle $N'\times M\to N'$ with a genuine foliation on the total space $N'\times M$ which is transverse to the fibers. 

 The space of foliation on a trivial $M$-bundle transverse to fibers is related to  $\Diff_0(M)$, the identity component of the diffeomorphism group, and the formal space that does not have the transversality condition is related to a section space over $M$ whose fiber is at least $n$-connected. Thurston showed that these two spaces satisfy a certain {\it fragmentation property}.  It is easier to state this property for $\Diff_0(M)$. So let $\{ U_i\}_{i}$ be a finite open cover for $M$. Fragmentation with respect to this cover means that any element $f\in \Diff_0(M)$ can be written as a composition of diffeomorphisms $f_j$ such that  $f_j$ is compactly supported in some element of the cover  $\{ U_i\}_{i}$. He used the fragmentation property to filter these two spaces and compare their filtration quotients to prove his c-principle theorem. In this paper, we first improve and make the method of Thurston more abstract to be able to apply it to other geometric structures.

To set up a more general context, let $F: (\mathsf{Mfld}^{\partial}_n)^{op}\to \mathsf{S}$ be a presheaf from the category of smooth $n$-manifolds (possibly with nonempty boundary) with smooth embeddings as morphisms to a convenient category of spaces $\mathsf{S}$. For our purpose, we shall consider the category of simplicial sets or compactly generated Hausdorff spaces. Let $F^f$ be the homotopy sheafification of $F$ with respect to $1$-good covers meaning contractible open sets whose nontrivial intersections are also contractible (see \cite{MR3138384} for more details). One can describe the value of $F^f(M)$ as the space of sections of the bundle $\text{Fr}(M)\times _{\mathrm{GL}_n(\bR)} F(\bR^n)\to M$, where $\text{Fr}(M)$ is the frame bundle of $M$.  We say $F$ satisfies an h-principle if  the natural map from the functor to its homotopy sheafification 
\[
j:F(M)\to F^f(M),
\]
 induces a weak equivalence and we say it satisfies the c-principle if the above map is a homology isomorphism. 
 
 Some important examples of such presheaf in the manifold topology are the space of generalized Morse functions (\cite{MR744854}), the space of framed functions (\cite{MR882699}), the space of smooth functions on $M^n$ that avoid singularities of codimension $n+2$ (this is, in general, a c-principle theorem, see \cite{MR1168473}), the space of configuration of points with labels in a connected space (\cite{MR0358766}), etc. h- and c-principle theorems (see \cite{MR1909245}) come in many different forms but the general philosophy is that a space of a geometric significance $F(M)$ which is sometimes called ``holonomic solutions" is homotopy equivalent or homology isomorphic to ``formal solutions" $F^f(M)$ (the superscript $f$ stands for formal). The space of formal solutions, $F^f(M)$,  is more amenable to homotopy theory since it is often the space of sections of a fiber bundle and therefore, it is easy to check its homotopy sheaf property with respect to certain covers. Hence, from the homotopy theory point of view, proving the h-principle theorem consists of a ``local statement" which is an equivalence of holonomic solutions and formal solutions on open balls, and a ``local to the global statement"  which is a homotopy sheaf property for the geometric functor of holonomic solutions. 

Thurston in (\cite{thurston1974foliations}), however, found a remarkable {\it compactly supported} c-principle theorem without knowing the ``local statement". The main goal of this paper is to abstract his ideas to prove new variants of compactly supported c-principle theorems without knowing the local statement. To briefly explain his compactly supported c-principle theorem, let $\text{Fol}_c(M):=\overline{\BDiff_c(M)}$\footnote{Historically as in \cite{thurston1974foliations}, for any topological group $G$, the space $\overline{\mathrm{B}G}$ is defined to be the homotopy fiber of the map $\mathrm{B}G^{\delta}\to \mathrm{B}G$ where $G^{\delta}$ is the group $G$ with discrete topology and the map induced by the identity homomorphism $G^{\delta}\to G$.} be the realization of the semisimplicial set whose $k$-simplices are given by the set of codimension $n$ foliations on $M^n \times \Delta^k$ that are transverse to the fibers of the projection $M \times \Delta^k\to \Delta^k$ and the foliations are horizontal outside of some compact set. 

To describe the space of formal solutions in this case, we need to recall the notion of Haefliger classifying space which is the space of formal solutions on an open ball. Let $\text{Fol}^f(\bR^n):=\overline{\mathrm{B}\Gamma_n}$ \footnote{Historically as in \cite{haefliger1971homotopy}, $\overline{\mathrm{B}\Gamma_n}$ is defined to be the homotopy fiber of the map $\nu\colon \mathrm{B}\Gamma_n\to \mathrm{B}\text{GL}_n(\bR)$ where  $ \mathrm{B}\Gamma_n$ is the classifying space of codimension $n$ Haefliger structures and $\nu$ classifies their normal bundles. } be the realization of a semisimplicial set whose $k$-simplices are given by the set of the {\it germs}  of codimension $n$ foliations  on $\bR^n \times \Delta^k$ around $\{0\} \times \Delta^k$ that are transverse to the fibers of the projection $\bR^n \times \Delta^k\to \Delta^k$. After fixing a base section of the space of sections of $\text{Fr}(M)\times _{\mathrm{GL}_n(\bR)} \text{Fol}^f(\bR^n)\to M$, we can define the support of sections to be the set on which they take different values from the base section. Let $\text{Fol}^f_c(M)$  be the space of compactly supported sections with respect to the fixed base section. Thurston proved that there exists a natural map $\text{Fol}_c(M)\to \text{Fol}^f_c(M)$ which induces a homology isomorphism. 

Although Segal later proved  (see \cite{segal1978classifying})   the local statement that $\overline{\BDiff(\bR^n)}$ is homology isomorphic to $\overline{\mathrm{B}\Gamma_n}$ which led to a different proof (\cite{mcduff1979foliations}) of Thurston's theorem, Thurston's original proof of the fact that a natural map $\text{Fol}_c(M)\to \text{Fol}^f_c(M)$ induces a homology isomorphism did not use this local statement.

The main idea is, given a metric on $M$ satisfying a mild condition (see \Cref{complete}), Thurston gives a compatible filtration on the space of foliated $M$-bundles  $\text{Fol}_c(M)$ and the space of formal solutions $\text{Fol}_c^f(M)$ which is a section space, and compares the spectral sequences of these filtrations to prove his compactly supported c-principle theorem. These filtrations are inspired by his idea of ``fragmenting" diffeomorphisms of manifolds that are isotopic to the identity.

\subsection{c-principle theorems via fragmentation}
Part of the method, Thurston used to prove his c-principle theorem is of course specific to foliation theory. In particular, the fact is that the local statement, in that case, was very nontrivial and the way he proved the compactly supported version without the local statement is specific to foliation theory. However, we show that given the local statement (which is often the easy case unlike foliation theory), we can still apply Thurston's method to obtain a compactly supported c-principle theorem. Then we also use this general strategy to prove versions of Thurston's theorem for other geometric structures that were conjectured to hold. 

Normally in c-principle theorems, the local statement is that the map $ F(\bR^n)\to  F^f(\bR^n)$ is a homology isomorphism or even a homotopy equivalence. In this context, when the functor is defined on manifolds with boundaries, we would like to consider closed disks instead. To do this, we first need to define $ F^f(-)$ on $\mathsf{Mfld}^{\partial}_n$ and in particular on closed disks. Fixing the space $F^f(\bR^n)$, we can define $F^f(-)$ on other manifolds ``linearly" as follow.
\begin{defn}\label{formal} Given that the group $\mathrm{GL}_n(\bR)$ acts both on $ F^f(\bR^n)$ since it acts on $\bR^n$ and it also acts on the frame bundle $\text{Fr}(M)$, we can form the following natural bundle over $M$ 
 \[
\text{Fr}(M)\times _{\mathrm{GL}_n(\bR)} F^f(\bR^n)\to M,
 \]
whose fiber is $ F^f(\bR^n)$. Let the space of formal solutions, $F^f(M)$, be the space of sections of this fiber bundle.
 \end{defn}
 \begin{rem} Note that since $D^n$ is contractible $F^f(D^n)\simeq F^f(\bR^n)$ and in all the examples of c-principle in the introduction, the common feature is the space $ F^f(D^n)$ is in fact at least $(n-1)$-connected.  Therefore, the cosheaf of compactly supported sections $F_c^f(-)$ satisfies the fragmentation property and non-abelian Poincare duality. 
\end{rem}
 The set-up of the c-principle theorem that we are interested in is the following: we have a natural transformation $\iota: F(-)\to F^f(-)$ that respects the choice of base sections. Hence, for any manifold $M$, we have an induced map 
 \[
 F_c(M)\to F_c^f(M).
 \]
 We want to find conditions under which the above map induces a homology isomorphism. 
 \begin{defn}\label{geod}For a given metric space $(M,d)$, the intrinsic metric between two points $x$ and $y$ in $M$ is defined to be the infimum of the lengths of all paths from $x$ to $y$. If the intrinsic metric agrees with the original metric $d$ on $M$, we call $(M,d)$ a length metric space. And if additionally there always exists a path that achieves the infimum of length (a geodesic) between all pairs of points, we call $(M,d)$ a geodesic space.
\end{defn}

\begin{defn}\label{complete}
Let  $s_0\in F(M)$ be a fixed global section and we fix a metric $d(-,-)$ on $M$. We suppose that the metric is complete and $(M,d)$ is a geodesic space and there exists an $\epsilon>0$  such that all balls of radius $\epsilon$ are geodesically convex. If $M$ is compact, then these two conditions are automatically given. For any other element $s\in F(M)$, we define the notion of {\it support}, $\text{supp}(s)$, with respect to $s_0$ to be the closure of points in $M$ at which the stalk of $s$ and $s_0$ are different. Now let $F_{\epsilon}(M, s_0)$ denote the subspace of $F(M)$ consisting of elements $s$ such that the support of $s$ can be covered by $k$ geodesically convex balls of radius $2^{-k}\epsilon$ for some positive integer $k$. Also, we can define the subspace of compactly supported elements. We shall suppress the fixed global section $s_0$ from the notation for brevity.  In the case of a non-empty boundary, we assume that the supports of all elements of $F_c(M, s_0)$ and $F_c^f(M, s_0)$ are away from the boundary.
\end{defn}
\begin{defn}\label{prop}
We say the functor $F$ satisfies the fragmentation property if the inclusion $F_{\epsilon}(M)\to F_c(M)$ is a weak equivalence for all $M$. 
\end{defn}
\begin{defn}\label{condi}
We say $F: (\mathsf{Mfld}^{\partial}_n)^{op}\to \mathsf{S}$ is {\it good}, if it satisfies 
\begin{itemize}
\item The subspace of elements with empty support in $F(M)$ is contractible. 

\item For an open subset $U$ of a manifold $M$, the inclusion $F_c(U)\to F_c(M)$ is an open embedding (We will consider the weaker condition in \Cref{condi}).
\item Let $U$ and $V$ be open disks.  All embeddings $U\hookrightarrow V$ induces a homology isomorphism between $F_c(U)$ and  $F_c(V)$.
\item For each finite family of open sets $U_1,\dots U_k$ such that they  are pairwise disjoint and are contained in an open set $U_0$, we have a permutation invariant map
\[
\mu^{U_0}_{U_1,\dots, U_k}\colon \prod_{i=1}^k F_c(U_i)\to F_c(U_0),
\]
where this map satisfies the obvious associativity conditions and for $U_0=\bigcup_{i=1}^{k} U_i$, the map $\mu^{U_0}_{U_1,\dots, U_k}$ is a weak equivalence.
\item Let $\partial_1$ be the northern-hemisphere boundary of $D^n$.  Let $F(D^n, \partial_1)$ be the subspace of $F(D^n)$ that restricts to the base element in a germ of $\partial_1$ inside $D^n$. We assume $F(D^n, \partial_1)$ is acyclic.  
\end{itemize}
  \end{defn}

\begin{thm}\label{main}
Let $F$ be a good sheaf on $n$-dimensional manifolds such that
\begin{itemize}
\item $F^f(D^n)$ is at least $(n-1)$-connected. \item $F$ has the fragmentation property. 
\end{itemize}
Then for any manifold $M$ which admits a metric that makes $M$ a complete geodesic space, then the map
 \[
 F_c(M)\to F_c^f(M),
 \]
induces a homology isomorphism.
 \end{thm}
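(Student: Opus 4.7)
The plan is to adapt Thurston's original filtration-plus-spectral-sequence approach. By the fragmentation hypothesis, the inclusion $F_\epsilon(M)\hookrightarrow F_c(M)$ is a weak equivalence, so it suffices to compare $F_\epsilon(M)$ with $F_c^f(M)$. On the formal side, the $(n-1)$-connectivity of $F^f(D^n)$ implies, via non-abelian Poincar\'e duality as noted in the remark after \Cref{formal}, that the cosheaf $F_c^f(-)$ itself satisfies an analogous fragmentation property, so one can work with a compatible $\epsilon$-small model $F_\epsilon^f(M)$ as well.

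Next, I would introduce parallel Thurston-style filtrations on both spaces. Let $F_\epsilon^{\leq k}(M)\subseteq F_\epsilon(M)$ consist of sections whose support can be covered by $k$ geodesically convex balls of radius $2^{-k}\epsilon$, and define $F_\epsilon^{f,\leq k}(M)$ analogously. Since the comparison map $\iota$ preserves supports and the chosen base section, it carries one filtration into the other, yielding a map of spectral sequences. The doubly shrinking radius $2^{-k}\epsilon$ is what allows one to pass from stage $k-1$ to stage $k$ in a controlled way, since any level-$k$ cover is refined by smaller balls at level $k+1$ whose intersection combinatorics is tame.

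The crucial step is to identify, in homology, the $k$-th filtration quotient on each side with an unordered configuration space of $k$ points in $M$ labeled by the homology of $F_c$ (resp.\ $F_c^f$) of a small disk with trivial-germ boundary condition. The multiplication maps $\mu^{U_0}_{U_1,\dots,U_k}$ for disjoint families provide the gluing; the weak equivalence $\prod F_c(U_i)\to F_c(\bigcup U_i)$ for covers provides the local model; the homology invariance of $F_c$ under embeddings of open disks makes the label independent of the chosen ball; the acyclicity of $F(D^n,\partial_1)$ kills the correction terms coming from sections that abut the boundary of a ball at the interface between filtration stages; and the contractibility of empty-support sections gives a contractible bottom stage. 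The same identification on the formal side then yields agreement of the $E^1$-pages and hence a homology isomorphism of abutments.

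The main obstacle will be the combinatorial control at the filtration interfaces: two different coverings of the support by $k$ balls can intersect in complicated ways, while the multiplication structure is only defined for pairwise disjoint families. The decreasing radius $2^{-k}\epsilon$ together with the acyclicity of $F(D^n,\partial_1)$ are precisely what reduce each quotient, up to acyclic pieces, to configurations of $k$ disjoint tiny balls labeled by compactly supported data; making this reduction precise, and checking that the two spectral sequences are appropriately convergent so that a level-wise iso on $E^1$ implies a homology iso of abutments, is where the bulk of the technical work will lie.
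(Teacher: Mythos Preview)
Your outline captures the coarse shape of the argument---filter both sides by the number of balls and compare spectral sequences---but it has a genuine circularity at its core. You describe the $k$-th filtration quotient on each side as a configuration space of $k$ points in $M$ labeled respectively by $F_c(D^n,\partial)$ and by $F_c^f(D^n,\partial)\simeq\Omega^n F^f(D^n)$, and then assert ``agreement of the $E^1$-pages.'' But agreement requires that the label map $F_c(D^n,\partial)\to F_c^f(D^n,\partial)$ already be a homology isomorphism, and that is precisely the theorem for $M=\operatorname{int}(D^n)$. Nothing in your list of good-sheaf axioms gives this directly: the hypotheses only say $F^f(D^n)$ is $(n-1)$-connected, not that $F(D^n,\partial)$ deloops it.

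The paper resolves this by treating the disk case first with a \emph{reverse} bootstrap rather than a forward spectral-sequence comparison. On the disk the situation is special: since $D^n$ is contractible, the map $F_\epsilon(D^n)\to F_\epsilon^f(D^n)$ at the \emph{top} of the filtration is a weak equivalence for free (sheafification does nothing over a point). One then runs the filtration spectral sequence for the \emph{pair} $(F_\epsilon^f(D^n),F_\epsilon(D^n))$; it converges to zero, and a careful acyclicity estimate on the filtration quotients---obtained by relating them to a functor $\nu_k(X)\simeq\bigl(S^{nk}/\Delta_{\mathrm{fat},k}\bigr)\wedge X^{\wedge k}$ that boosts $j$-acyclic maps to $(j+n+2k-2)$-acyclic ones---forces the bottom map $j_1$ to be a homology isomorphism. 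That map is, via the $k=1$ case of the same identification, exactly $F(D^n,\partial)\to\Omega^n F^f(D^n)$. Only after this disk step does the forward argument you sketch (filter, resolve by covers, compare fibers over the nerve of the cover poset) run for a general $M$. Your proposal skips the disk bootstrap entirely, which is where the actual content lies.
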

\begin{rem}
 The connectivity hypothesis in \Cref{fragment} and \Cref{main} is improved by one compared to the original Thurston's deformation technique. And as we shall see, this improvement will be useful to prove Mather-Thurston-type theorems for different geometric structures. One can also use this method to give a different proof of McDuff's theorem on the local homology of volume-preserving diffeomorphisms (\cite{mcduff1983local, MR707329}) using the methods of this paper. In that case $F(D^n)$ is at best $(n-1)$-connected (see \cite[Remark 2, part (a)]{haefliger1971homotopy}).
\end{rem}
It would be interesting to see if Thurston's method gives a different proof of Vassiliev's c-principle theorem (\cite{MR1168473}). In the last section, we discuss how one could use Thurston's fragmentation idea for the space of functions on $M$ avoiding singularities of codimension $\text{dim}(M)+2$. However, our main motivation still lies in foliation theory. 

Fragmentation property of foliation with different transverse structures (\cite{MR2509723, MR2284795, MR2729009}) have been extensively studied and conjecturally it is expected that an analog of Thurston's theorem or so-called Mather-Thurston's theory (for PL-homeomorphisms see \cite[Section 5]{MR0339195} and for a different version see also \cite{MR1200422}, for contactomorphisms see \cite{MR2729009}) should also hold for them. We prove in \Cref{newMT}  new variants of Mather-Thurston's theorem for contactomorphisms and PL-homeomorphisms which were conjectured by  Rybicki and Gelfand-Fuks/Greenberg respectively.

Recently, there were new geometric approaches to Mather-Thurston's theory due to Meigniez (\cite{meigniez2018quasi}) and Freedman (\cite{freedman2020controlled}). However, in this paper, we follow Mather's account (\cite{mather2011homology}) of Thurston's proof of this remarkable theorem in foliation theory. McDuff followed in \cite{mcduff1980homology, mcduff1979foliations} Segal's method (\cite{segal1978classifying}) to find a different proof of Mather-Thurston's theorem and she proved the same theorem for the volume-preserving case (\cite{MR707329, MR699012, mcduff1983local}). The techniques in Segal and McDuff's approach and in particular, their group completion theorem (\cite{mcduff1976homology}) are now well-understood tools in homotopy theory.  The author hopes that this paper also makes Thurston's ideas available to a broader context. 

\subsection{Mather-Thurston theory for new transverse structures} We consider two different transverse structures of foliated bundles for which the fragmentation properties were known and hence conjecturally the analogs of Mather-Thurston's theorem were posed (\cite{MR2729009, MR1200422}). We shall first recall these transverse structures.

\begin{defn}\label{foliatedbdle}
\begin{itemize}
\item Let $M$ be a smooth odd-dimensional manifold with a fixed contact structure $\alpha$. Let $\text{Fol}_c(M, \alpha)$ be the realization of the simplicial set whose $k$-simplices are given by the set of codimension $\text{dim}(M)$ foliations  on $M \times \Delta^k$ that are transverse to the fibers of the projection $M \times \Delta^k\to \Delta^k$ and the holonomies are compactly supported contactomorphisms of the fiber $M$.
\item Let $M$ be a PL $n$-dimensional manifold. Let $\text{Fol}^{\text{PL}}_c(M)$ be the realization of the simplicial set whose $k$-simplices are given by the set of codimension $\text{dim}(M)$ foliations  on $M \times \Delta^k$ that are transverse to the fibers of the projection $M \times \Delta^k\to \Delta^k$ and the holonomies are compactly supported PL-homeomorphisms of the fiber $M$.
\end{itemize}
\end{defn}
Analogue of Mather-Thurston's theorem in these cases can be summarized as follows.
\begin{thm}\label{MT}
The functors $\textnormal{\text{Fol}}_c(M, \alpha)$ and $\textnormal{\text{Fol}}^{\textnormal{\text{PL}}}_c(M)$ satisfy the c-principle.
\end{thm}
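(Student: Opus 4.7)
The plan is to deduce Theorem \ref{MT} from Theorem \ref{main} applied to the two functors $F_1(-) = \text{Fol}_c(-,\alpha)$ and $F_2(-) = \text{Fol}^{\text{PL}}_c(-)$. In each case, the space $F_i^f(D^n)$ is the appropriate Haefliger-type classifying space of germs of foliations on $\Delta^k\times D^n$ transverse to the projection whose transverse structure is contact (respectively PL), namely the realizations $\overline{\mathrm{B}\Gamma_n^{\text{Cont}}}$ and $\overline{\mathrm{B}\Gamma_n^{\text{PL}}}$. With these identifications, Definition \ref{formal} gives exactly the section space that plays the role of formal solutions, so the c-principle statement is precisely the content of Theorem \ref{main} for $F_i$.

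The first hypothesis of Theorem \ref{main}, namely that $F_i^f(D^n)$ is $(n-1)$-connected, follows from the standard analysis of Haefliger classifying spaces in the relevant transverse category: the $k$-skeleton is built from germs of foliations on $\Delta^k\times D^n$, which in low dimensions can be straightened to the trivial product foliation by an obstruction-theoretic argument using contractibility of the space of contact (respectively PL) structures on a ball. The second hypothesis, the fragmentation property of Definition \ref{prop}, is where the existing literature does the heavy lifting: for the contact case this is Rybicki's fragmentation of compactly supported contactomorphisms \cite{MR2729009}, and for the PL case it follows from the Gelfand-Fuks and Greenberg fragmentation of compactly supported PL homeomorphisms \cite{MR0339195,MR1200422}. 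In both cases one passes from the group-level fragmentation to the simplicial-level statement by applying it simplex-wise to the holonomies of the foliated bundle.

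The bulk of the work is checking that $F_1$ and $F_2$ are good in the sense of Definition \ref{condi}. Four of the five bullets are essentially formal from the semisimplicial description of $\text{Fol}_c$: the trivial foliation gives a contractible space of empty-support elements; the open-embedding condition follows because extension by the trivial foliation is an open inclusion on each simplicial degree; the disjoint-union multiplication map $\mu^{U_0}_{U_1,\dots,U_k}$ is the obvious overlay of foliations with disjoint supports, and when $U_0 = \bigsqcup U_i$ it is a weak equivalence because a foliated bundle with support in $\bigsqcup U_i$ is precisely a tuple of such on each piece; and the invariance under embeddings of disks follows from the contractibility of the space of self-embeddings of a disk together with isotopy extension in the contact/PL category.

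The main obstacle, and the step requiring the most care, is verifying the final bullet: that $F_i(D^n,\partial_1)$ is acyclic. This is the analogue of Mather's key acyclicity lemma underlying the proof of Mather-Thurston, and it is where one genuinely uses fragmentation together with a Mather-style shift (or ``Eilenberg swindle") argument to absorb any cycle into the base point. Concretely, one fragments an element supported away from $\partial_1$ into pieces whose supports can be pushed by an isotopy toward $\partial_1$ and then off to infinity inside a germ of $\partial_1$, producing a nullhomology. Making this argument work requires that the group of compactly supported contactomorphisms (respectively PL homeomorphisms) of a half-space admits an infinite shift isotopic to the identity together with the fragmentation property of the holonomy group, both of which are available in our two cases. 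Once this acyclicity is in hand, Theorem \ref{main} applies directly and gives the homology isomorphism $F_i{}_c(M)\to F_{i,c}^f(M)$ claimed in Theorem \ref{MT}.
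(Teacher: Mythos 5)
Your overall plan is close in spirit to the paper's, but there are two genuine gaps, one of which would stop the argument cold.

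The essential missing ingredient is Thurston's \emph{disk model}. The machinery you want to invoke (the paper's \Cref{h}, of which \Cref{main} is an informal restatement) requires as a hypothesis a weak equivalence $F(D^n)\xrightarrow{\simeq}F^f(D^n)$, i.e.\ a local statement. If you take $F_1(-)=\text{Fol}_c(-,\alpha)$ literally, then $F_1(D^n)\simeq\overline{\BCont_c(D^{2n+1},\alpha_{st})}$, while $F_1^f(D^n)\simeq\overline{\mathrm{B}\Gamma_{2n+1,ct}}$. Whether these two spaces are homology isomorphic is precisely the \emph{unknown} local statement for contactomorphisms, and the paper explicitly flags it as open (``it is unknown whether the map $\overline{\BCont(\bR^{2n+1},\alpha_{st})}\to\overline{\mathrm{B}\Gamma_{2n+1,ct}}$ induces a homology isomorphism''), and likewise for PL. The paper's fix is to redefine $F(D^n)$ via the disk model: the realization of the semisimplicial set whose $q$-simplices are \emph{germs} of transverse foliations on $\Delta^q\times\bR^n$ near $\Delta^q\times D^n$. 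With this model $F(D^n)$ is the Haefliger space itself, so $F(D^n)\simeq F^f(D^n)$ holds tautologically, while $F(D^n,\partial)$ is still $\overline{\BCont_c(\bR^{2n+1})}$ (resp.\ the PL analogue), which is what ultimately feeds into the c-principle for general $M$. Without this device your proof would be assuming the very statement Thurston's technique is designed to avoid.

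The second gap is in your verification of the disk-embedding condition of \Cref{condi}. Your claim that ``the invariance under embeddings of disks follows from the contractibility of the space of self-embeddings of a disk together with isotopy extension'' is not correct: the two groups $\Cont_c(B(r),\alpha_{st})$ and $\Cont_c(B(R),\alpha_{st})$ are not conjugate by a compactly supported isotopy, and isotopy extension alone does not produce the homology isomorphism $\overline{\BCont_c(B(r))}\to\overline{\BCont_c(B(R))}$. The paper's Lemma (\Cref{goodness}) does genuine work here: it computes $H_*(\overline{\mathrm{B}G})$ by the sub-chain complex of singular chains sending the first vertex to the identity, uses the Haller--Teichmann lemma that conjugation by an isotopic-to-identity element acts trivially on this homology, and then constructs a compactly supported contact contraction by cutting off the standard linear contraction $\rho_t(x_0,x)=(t^2x_0,tx)$ via the retraction of smooth vector fields onto contact vector fields (i.e.\ via contact Hamiltonians). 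None of this is ``essentially formal.'' Your sketch of the last condition ($F(D^n,\partial_1)$ acyclic) is closer in spirit, though the paper does not use an Eilenberg swindle: it produces a single concordance to the horizontal foliation by flowing along a contact isotopy that pushes $D^{2n+1}$ into a small collar of $\partial_1$ disjoint from the support, which trivializes the chain directly. Finally, a small attribution point: the PL fragmentation input is Gauld's local contractibility of $\mathrm{PL}(M)$ together with Hudson's isotopy extension theorem, fed into McDuff's Lemma \ref{mcduff}; Gelfand--Fuks and Greenberg posed the question but did not supply the fragmentation.
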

\begin{rem}
Gael Meigniez told the author that he has a forthcoming paper to show that the PL case could also be obtained using his geometric proof for the smooth case (\cite{meigniez2018quasi}) and there is a work in progress to use his method in the transverse contact structure. 
\end{rem}

\subsubsection{Perfectness and Mather-Thurston's theorems} Often in h- and c-principles theorems, the {\it formal solutions} is easier to study than the {\it holonomic solutions}. However, Thurston used the Mather-Thurston theorem and the perfectness of the identity component of smooth diffeomorphism groups to improve the connectivity of the Haefliger space which is on the formal side of the theorem. Similarly, our c-principle theorems and the perfectness results  in \cite{MR2729009, MR2509723, MR2284795} can be used to improve the connectivity results of the corresponding Haefliger structures. In particular, as a corollary (see \Cref{improve}) for transverse contact structures and flag of foliations, we obtain the following.
\begin{cor}
The Haefliger classifying space $\overline{\mathrm{B}\Gamma_{2n+1,ct}}$ of codimension $2n+1$ Haefliger structures with a transverse contact structure is  at least $(2n+2)$-connected. 
\end{cor}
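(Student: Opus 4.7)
The plan is to adapt Thurston's original argument improving the Haefliger connectivity bound by one, using the contact Mather--Thurston theorem of \Cref{MT} in place of the smooth Mather--Thurston theorem, and Rybicki's theorem \cite{MR2729009} that the identity component of the compactly supported contactomorphism group of $(\bR^{2n+1},\alpha_{0})$ is perfect in place of Thurston's perfectness theorem for $\Diff_{0}$.

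First, I would establish the baseline estimate that $\overline{\mathrm{B}\Gamma_{2n+1,ct}}$ is at least $(2n+1)$-connected. This is the analog for transverse contact Haefliger structures of Haefliger's classical $n$-connectivity of $\overline{\mathrm{B}\Gamma_{n}}$: a transverse contact Haefliger structure on $S^{k}$ with $k\leq 2n+1$ can be trivialized by extending the trivial contact foliation from a collar of a basepoint, since the obstructions to filling live above the relevant skeletal dimension in codimension $2n+1$.

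Next I would specialize \Cref{MT} to $M=\bR^{2n+1}$ equipped with the standard contact structure $\alpha_{0}$. Because $\bR^{2n+1}$ is parallelizable and contractible, the compactly supported section space of the formal bundle of \Cref{formal} reduces to an iterated loop space, and the c-principle of \Cref{MT} becomes a homology equivalence
\[
B\Cont^{\delta}_{c}(\bR^{2n+1},\alpha_{0}) \longrightarrow \Omega^{2n+1}\overline{\mathrm{B}\Gamma_{2n+1,ct}}.
\]
Rybicki's perfectness result then forces $H_{1}$ of the left-hand side to vanish, so $H_{1}(\Omega^{2n+1}\overline{\mathrm{B}\Gamma_{2n+1,ct}};\bZ)=0$.

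Finally, the baseline $(2n+1)$-connectivity from the first step implies that $\Omega^{2n+1}\overline{\mathrm{B}\Gamma_{2n+1,ct}}$ is path-connected; since the fundamental group of a loop space is abelian, Hurewicz identifies $\pi_{1}$ of the loop space with $H_{1}$, giving $\pi_{1}(\Omega^{2n+1}\overline{\mathrm{B}\Gamma_{2n+1,ct}})=0$, equivalently $\pi_{2n+2}(\overline{\mathrm{B}\Gamma_{2n+1,ct}})=0$, and hence the desired $(2n+2)$-connectivity. I expect the main subtlety to be the second step, namely the identification of the formal section space over $(\bR^{2n+1},\alpha_{0})$ with $\Omega^{2n+1}\overline{\mathrm{B}\Gamma_{2n+1,ct}}$: one must check that the relevant reduction of structure group implicit in a transversely contact Haefliger structure, and not merely the frame bundle, is trivializable over a Darboux chart, so that the associated bundle is globally trivial and its compactly supported section space is genuinely an iterated based loop space.
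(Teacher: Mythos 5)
Your argument is essentially the paper's: you combine the known $(2n+1)$-connectivity of $\overline{\mathrm{B}\Gamma_{2n+1,ct}}$ (which the paper simply quotes from McDuff \cite[Proposition 7.4]{mcduff1987applications} rather than re-deriving it via a Haefliger/Phillips-style argument as you sketch), the contact Mather--Thurston equivalence of \Cref{MT} applied to $(\bR^{2n+1},\alpha_{st})$, Rybicki's perfectness result, and the Hurewicz step together with the observation that $\pi_1$ of an iterated loop space is abelian. One imprecision worth flagging: the c-principle of \Cref{MT} produces the homology isomorphism from $\overline{\BCont_c(\bR^{2n+1},\alpha_{st})}$, the homotopy fiber of $\BdCont_c(\bR^{2n+1},\alpha_{st})\to \BCont_c(\bR^{2n+1},\alpha_{st})$, not from $\BdCont_c(\bR^{2n+1},\alpha_{st})$ itself as in your display; the two agree only if the topological group $\Cont_c(\bR^{2n+1},\alpha_{st})$ is weakly contractible, which, unlike the contractibility of $\Diff_c(\bR^n)$, is not automatic in the contact setting. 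The paper sidesteps this by working with $\overline{\BCont_c}$ throughout and reading Rybicki's theorem as perfectness of $\pi_1\bigl(\overline{\BCont_c(\bR^{2n+1})}\bigr)$ directly, so you should either justify the contractibility of $\Cont_c(\bR^{2n+1},\alpha_{st})$ or rephrase the middle step in terms of $\overline{\BCont_c}$.
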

These connectivity ranges are improved by one from the previously known ranges (see \cite[Proposition 7.4]{mcduff1987applications}).

However, for a PL manifold $M$, unlike other transverse structures, curiously the perfectness result is not known in general.  It was asked by Epstein (\cite{epstein1970simplicity}) whether $\text{PL}_0(M)$ as an abstract group is perfect and he proved it for $\text{PL}_0(S^1)$. In \cite{nariman2022flat}, the author used the c-principle for $\textnormal{\text{Fol}}^{\textnormal{\text{PL}}}_c(M)$ and the work of Greenberg (\cite{MR1200422}) to show that $\text{PL}_0(M)$ is perfect for any closed surface $M$. 

\subsection{Organization:}  In \Cref{sec3}, we discuss fragmentation homotopy and we  improve it to prove \Cref{fragment}. In \Cref{sec4}, we apply Thurston's fragmentation ideas in foliation theory in a broader context to prove \Cref{main}. In \Cref{newMT}, we prove a compactly supported version of Mather-Thurston's theorem for PL and contact transverse structures. In these cases still, the local statements are not known and therefore, the non-compactly supported versions are still open, in \Cref{sec2}, we use microfibration techniques to show that Thurston's fragmentation method implies the non-abelian Poincar{\' e} duality.
 \subsection*{Acknowledgment} I would like to thank John Mather for his encouragement and correspondence in 2016 about his paper \cite{mather2011homology}. Mather described his notes on Thurston's lectures  in an email: ``{\it The proof that I wrote up was based on 
lectures that Thurston gave at Harvard.  The lectures were sketchy and it 
was really hard to write up the proof.  I spent 14 months on it "}. We are grateful for his detailed account of Thurston's intuition which was the main inspiration for this paper.  I would like to thank Mike Freedman for many discussions around Mather-Thurston's theorem and for mentioning the relation to blob homology.  I would like to thank Gael Meigniez, F. Laudenbach, Y. Eliashberg, Sander Kupers, and T. Tsuboi  for their comments and discussions. The author is partially supported by NSF grant DMS-1810644,  NSF CAREER Grant DMS-2239106, and Simons Collaboration Grant award 855209. I also thank the referees for their careful reading and comments that improved the exposition and the readability of the paper.
\section{Thurston's fragmentation} \label{sec3}
In this section, we explain Thurston's idea of fragmentation and we improve the hypothesis of the connectivity of the fiber in Mather's note \cite[First deformation lemma]{mather2011homology} by one. And throughout the paper, we assume that $M$ satisfies the hypothesis in \Cref{complete}.

To explain his fragmentation idea, it is easier to start with fragmenting the space of sections. Let $\pi: E\to M$ be a Serre fibration over the manifold $M$ and we suppose $E$ is Hausdorff. Let $s_0$ be a base section of this fiber bundle. 
\begin{cond*}\label{goodsection}We assume that the base section satisfies the following homotopical property: there is a fiber preserving homotopy $h_t$ of $E$ such that $h_0=\text{id}$ and $h_1^{-1}(s_0(M))$ is a neighborhood of $s_0(M)$ in $E$ and $h_t(s_0(M))=s_0(M)$ for all $t$, in other words, the base section is a {\it good} base point in the space of sections.  We fix a metric on $M$ and assume that it is a geodesic space (see \Cref{geod}) that there exists a positive $\epsilon$ so that every ball of radius $\epsilon$ is geodesically convex.
\end{cond*}
By the support of a section $s$, we mean the closure of the points on which $s$ differs from the base section $s_0$. Let $\text{Sect}_c(\pi)$ be the space of compactly supported sections of the fiber bundle $\pi:E\to M$ equipped with the compact-open topology. 

Let $\text{Sect}_{\epsilon}(\pi)$ denote the subspace of sections $s$ such that the support of $s$ can be covered by $k$ geodesically convex balls of radius $2^{-k}\epsilon$ for some positive integer $k$. Note that there is a filtration on  $\text{Sect}_{\epsilon}(\pi)$ by the number of balls that cover the support. 

The reason for the choice of $2^{-k}\epsilon$, as we shall see in detail in \Cref{ssSpace}, is to have nice filtration quotients where the filtration is induced by the number of balls that cover the support of a section. For example, suppose the support of a section can be covered by two balls of radius $2^{-2}\epsilon$ but it cannot be covered by one ball of radius $2^{-1}\epsilon$ so it is a nontrivial element in the second term of the filtration quotients. Then one could choose those two balls to be disjoint and this phenomenon will be useful to describe the filtration quotients in particular in proving \Cref{j_1}.
\begin{thm}[Fragmentation property]\label{fragment} If the fiber of $\pi$ is at least $(n-1)$-connected, the inclusion 
\[
\text{\textnormal{Sect}}_{\epsilon}(\pi)\hookrightarrow \text{\textnormal{Sect}}_c(\pi)
\]
is a weak homotopy equivalence. 
\end{thm}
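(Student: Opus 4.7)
The plan is to prove $\pi_j(\mathrm{Sect}_c(\pi), \mathrm{Sect}_\epsilon(\pi)) = 0$ for every $j \geq 0$. A relative map $\sigma \colon (D^j, \partial D^j) \to (\mathrm{Sect}_c(\pi), \mathrm{Sect}_\epsilon(\pi))$ corresponds to a section $\tilde\sigma$ of $\pi \times \mathrm{id} \colon E \times D^j \to M \times D^j$ which agrees with $s_0$ outside a compact $K \subset M$ independently of $t \in D^j$. The goal is a rel-$\partial D^j$, compactly supported homotopy deforming $\tilde\sigma$ into a map whose values all lie in $\mathrm{Sect}_\epsilon(\pi)$.

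First, triangulate a compact neighborhood $K' \supset K$ by a smooth triangulation $T$ with mesh so fine that every closed $n$-simplex lies in a geodesically convex ball of radius $2^{-k}\epsilon$ for some prescribed large $k$. It then suffices to deform $\tilde\sigma$ so that the new section equals $s_0$ on an open neighborhood of the $(n-1)$-skeleton $T^{(n-1)}\cap K'$: the support of each $\tilde\sigma_t$ will then be confined to the finitely many $n$-simplices meeting $K$, each contained in a ball of the prescribed radius, placing $\tilde\sigma_t$ in $\mathrm{Sect}_\epsilon(\pi)$.

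Next, by upward induction on $i = 0, 1, \ldots, n-1$, I would construct a rel-$\partial D^j$, compactly supported deformation producing a section which additionally agrees with $s_0$ on a neighborhood of $T^{(i)}\cap K'$. On a single $i$-simplex $\Delta^i$, the restriction $\tilde\sigma\big|_{\Delta^i\times D^j}$ already equals $s_0$ on a collar of $\partial \Delta^i \times D^j$ by the inductive hypothesis, so defines for each $t$ an element of the space of sections over $\Delta^i$ with prescribed boundary, which is weakly equivalent to $\Omega^i F$ (where $F$ is a typical fiber). The $(n-1)$-connectedness of $F$ makes $\Omega^i F$ path-connected for every $i \leq n-1$, so each $\tilde\sigma_t\big|_{\Delta^i}$ is joined by a path to $s_0\big|_{\Delta^i}$. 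To arrange this in the $D^j$-family and rel $\partial D^j$, I would invoke the good-basepoint homotopy $h_t$ from \Cref{goodsection}: first fiberwise deform $\tilde\sigma$ into the canonical neighborhood $h_1^{-1}(s_0(M))$, then apply $h_1$ to land on $s_0$---a two-step procedure naturally continuous in all parameters, cut off by a bump function on $D^j$ vanishing near $\partial D^j$.

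\textbf{Main obstacle.} The delicate point---and the source of the improvement by one on the connectivity hypothesis over Mather's original first deformation lemma---is reconciling the parameterized rel-$\partial D^j$ nullhomotopy with the cellular obstruction count. A naive rel-$\partial D^j$ nullhomotopy on an $i$-cell has obstruction in $\pi_{i+j}(F)$, which for varying $j$ would force $F$ to be arbitrarily highly connected. The good-basepoint homotopy $h_t$ circumvents this by substituting a canonical fiberwise retraction onto the base section for the relative nullhomotopy, reducing the genuine obstruction to the parameter-free $\pi_i(F)$ for $i \leq n-1$. Equally delicate is organizing the bump functions in the $M$- and $D^j$-directions so that the successive cellular deformations remain compactly supported in $K'$, respect the rel-$\partial D^j$ condition, and do not undo one another across stages of the induction.
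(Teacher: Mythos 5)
You correctly identify the central difficulty in the ``Main obstacle'' paragraph, but the proposed fix does not actually resolve it, so there is a genuine gap. Your plan is to induct over the skeleta of a fine triangulation of (a compact neighborhood of the support inside) $M$, at stage $i$ deforming the family so it agrees with $s_0$ over a neighborhood of the $i$-skeleton, for all $t\in D^j$. As you observe, the obstruction to nullhomotoping a $D^j$-family of sections over an $i$-cell rel its boundary lives in degrees up to $i+j$; already for $i=n-1$ and $j\ge 1$ this requires $\pi_n(F)=0$, so the hypothesis that $F$ be only $(n-1)$-connected is not enough and the required connectivity grows without bound in $j$. The claim that the good-basepoint homotopy $h_t$ of \Cref{goodsection} ``reduces the genuine obstruction to the parameter-free $\pi_i(F)$'' is where the argument breaks: $h_1$ only collapses the neighborhood $V=h_1^{-1}(s_0(M))$ onto the base section, so it is useful for turning a section that \emph{already} takes values in $V$ into one equal to $s_0$, but it gives no help with the prior step of deforming $\tilde\sigma$ to take values in $V$ over $T^{(n-1)}\times D^j$. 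That step still carries the full parameterized obstruction. Nothing in \Cref{goodsection} says $V$ is large in any sense that would make this automatic; it is an arbitrary neighborhood of the base section.

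The idea you are missing is Thurston's fragmentation homotopy $H_t\colon M\times\Delta^q\to M\times\Delta^q$, and it is precisely what decouples the obstruction dimension from the parameter dimension. One does \emph{not} try to make the section equal $s_0$ over the whole $(n-1)$-skeleton of $M$ uniformly in $t$. Instead, one picks a partition of unity on $M$ and forms $H_1$, which preserves the $M$-coordinate and crushes $M\times\Delta^q$ onto a subcomplex $L(\Delta^q)=H_1(M\times V(\Delta^q))$ that is a \emph{finite union of graphs over $M$}; removing a small ball $B_\epsilon$ gives $L_\epsilon(\Delta^q)$ of homotopical dimension $n-1$, independently of $q$. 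The only obstruction-theoretic input (\Cref{lem1}) is to adjust the original family so that its adjoint vanishes on $L_\epsilon(D^q)$; those obstructions live over $L_\epsilon(D^q)\times[0,1]$, which has homotopical dimension $n$, so $(n-1)$-connectedness of the fiber suffices. After that, precomposing with $H_t$ (not changing the section data, only reparametrizing the family) does the rest: for each fixed $t$, the complement of $H_1^{-1}(L_\epsilon(\Delta^q))$ in the slice $M\times\{t\}$ is covered by at most $q$ partition-of-unity supports plus the small ball (\Cref{property}), so the deformed section lands in $\text{Sect}_\epsilon(\pi)$. Two further features of this scheme that your approach does not reproduce: the deformation of \Cref{lem1} shrinks supports ($\text{supp}(G'_t(s))\subset\text{supp}(g(s))$), which is what guarantees that the boundary family stays in $\text{Sect}_\epsilon(\pi)$ throughout (it is not held literally fixed, and in fact cannot be, since you want to change its values on the $(n-1)$-skeleton); and the final covering of each support uses roughly $q$ balls, not one ball per top cell of a triangulation, which matters for matching the definition of $\text{Sect}_\epsilon$. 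As written, your induction over $T^{(i)}$ would need an unbounded connectivity hypothesis, and the step ``deform into $V$'' is unjustified.
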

\begin{rem}
 Mather refers to the above statement as a deformation lemma in \cite{mather2011homology} and he assumed that the fiber is $n$-connected but we show that $(n-1)$-connectedness is enough. 
\end{rem}
\begin{rem}\label{Federico Cantero Moran}
In general, if the fiber of $\pi$ is $(n-k)$-connected, the same techniques apply to localize the support of the sections. For example,  for a fixed neighborhood $U$ of the $(k-1)$-skeleton, one could show  that the space $\text{\textnormal{Sect}}_c(\pi)$ is weakly equivalent to the subspace of sections that are supported in $U$ union  $s$ balls of radius $2^{-s}\epsilon$ for some non-negative integer $s$.  But this is not the direction, we want to pursue in this paper.
\end{rem}
As we shall see in \Cref{thurston}, given a $D^k$-family of sections in $ \text{\textnormal{Sect}}_c(\pi)$, we subdivide the parameter space $D^k$ and change the family up to homotopy such that on each part of this subdivision, the new family is supported in the union of $k$ balls of radius $2^{-k}\epsilon$. 

\subsection{Fragmentation homotopy}\label{thurston} Let $\{\mu_i\}_{i=1}^N$ be a partition of unity with respect to an open cover of $M$. We define a {\it fragmentation homotopy} with respect to this partition of unity. Let $\nu_0=0$ and for $j>0$, let $\nu_j$ be the function
\[
\nu_j(x)=\sum_{k=1}^j\mu_k(x).
\]
We shall write $\Delta^q$ for the standard $q$-simplex parametrized by $$\{ {\bf t}=(t_1,t_2,\dots,t_q);  0\leq t_1\leq \dots\leq t_q\leq 1\}.$$  We now consider the following map
\[
H_1:M\times \Delta^q\to M\times \Delta^q,
\]
\[
H_1(x, (t_1,t_2,\dots,t_q))=(x,(u_1,u_2,\dots,u_q)),
\]
\[
u_i(x, {\bf t})=\nu_{\floor{Nt_i}}(x)+\mu_{\floor{Nt_i}+1}(x)(Nt_i-\floor{Nt_i}).
\]
Note that $u_i$ only depends on $t_i$ and $x$. Since $H_1({\bf t},x)$ preserves the $x$ coordinate, we can define a straight line homotopy $H_t:M\times \Delta^q\to M\times \Delta^q$ from the identity to $H_1$. 
\begin{figure}[h]\label{frag}
\[
\begin{tikzpicture}
  \def\rectanglepath{-- ++(3cm,0cm)  -- ++(0cm,3cm)  -- ++(-3cm,0cm) -- cycle}
  \draw (0,0) \rectanglepath;
  \draw (6.5,0) \rectanglepath;
  \draw [decorate,
    decoration = {calligraphic brace, mirror}] (0.1,0.1) --  (0.1,1.9);
 
 \draw [decorate,
    decoration = {calligraphic brace, mirror}] (1.1,1.6) --  (1.1,2.2);
    
     \draw [decorate,
    decoration = {calligraphic brace, mirror}] (2.1,1.8) --  (2.1,2.9);
 
\draw (1,0)--(1,3);
\draw (2,0)--(2,3);
\draw (0,2)--(1,2);
\draw (1, 1.5)--(2, 1.5);
\draw (1,2.3)--(2, 2.3);
\draw (2, 1.7)--(3,1.7);
\node (X) at (1.5,-0.5) {$\Delta^1$};
\node (Y) at (8,-0.5) {$\Delta^1$};
\node (Z) at (-0.5,1.5) {$M$};
\node (W) at (6,1.5) {$M$};
\node (R) at (0.58,1.) {\scalebox{0.5}{$\text{supp}(\mu_1)$}};
\node (R) at (1.58,1.9) {\scalebox{0.5}{$\text{supp}(\mu_2)$}};
\node (R) at (2.6,2.37) {\scalebox{0.5}{$\text{supp}(\mu_3)$}};
\draw [ fill=gray] (0,2) rectangle (1,3);
\draw [fill=gray] (1,0) rectangle (2,1.5);
\draw [ fill=gray] (1,2.3) rectangle (2,3);
\draw [ fill=gray] (2,0) rectangle (3,1.7);
\draw [ultra thick] (6.5,2) to [out=-20,in=150] (9.5,1.5);
\draw [ultra thick] (6.5,2.3) to [out=-30,in=150] (9.5,1.7);
\draw [ultra thick] (6.5,0) -- (6.5,3);
\draw [ultra thick] (9.5,0) -- (9.5,3);
\draw [->](4,1.5) -- (5.,1.5) node[midway,above] {$H_1$};
\end{tikzpicture}
\]
\caption{Fragmentation map for $N=3$ and $q=1$. The bold lines are the images of $M\times\{0\}, M\times \{1/3\}, M\times\{2/3\}$ and $M\times \{1\}$ under the map $H_1$.}\label{frag}
\end{figure}
As in Figure \ref{frag}, the map $H_1$ is defined so that the gray area  is mapped onto the union of  the bold lines in the target where the union of bold lines is a subcomplex of $M\times \Delta^q$ of dimension $n=\text{dim}(M)$. 

It is easy to check that $H_t$ is compatible with the face maps $d_i: \Delta^{q-1}\to \Delta^q$. Therefore, for any simplicial complex $K$, we still can define the homotopy $H_t: M\times K\to M\times K$.  Note that we can choose the integer $N$ as large as we want but as we shall see in the proof of \Cref{fragment} we want to homotope a map $g:K\to \text{\textnormal{Sect}}_c(\pi)$, and the choice of $N$ depends on the dimension of the parameter space $K$.

\begin{defn}To define the analogue of bold lines in Figure \ref{frag} for the simplicial complex $K$, let $V(\Delta^q)$ be the set ${\bf t}\in \Delta^q$ such that $N{\bf t}$ is a vector with integer coordinates. Let $V(K)$ be the union of $V(\Delta^q)$, where the union is taken over simplices of $K$. The analogue of bold lines is $L(K)=H_1(M\times V(K))\subset M\times K$. \end{defn}
Note that the topological dimension of the subcomplex $L$ is $n$. But if we choose any small open ball $B_{\epsilon}$ of radius say $2^{-q-1}\epsilon$ where $q=\text{dim}(K)$, then the homotopical dimension of $L_{\epsilon}(K):= H_1((M\backslash B_{\epsilon})\times V(K))$ is $n-1$ because the $n$-dimensional manifold $M\backslash B_{\epsilon}$ has homotopical dimension $n-1$ by which we mean it has the homotopy type of a CW complex of dimension $n-1$. 

The fragmentation map $H_1$ has the following useful property.
\begin{lem}\label{property}
Let $H_1:M\times \Delta^q\to M\times \Delta^q$ be the fragmentation map. For each ${\bf t}\in \Delta^q$, the space $(M\times {\bf t})\backslash H^{-1}_1(L_{\epsilon}(\Delta^q))$ can be covered by the support of at most $q$ functions among the partition of unity functions and the ball $B_{\epsilon}$. 
\end{lem}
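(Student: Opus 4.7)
The plan is, for each $\mathbf{t}=(t_1,\ldots,t_q)\in\Delta^q$, to exhibit an explicit collection of at most $q$ partition-of-unity functions whose supports together with $B_\epsilon(\Delta^q)$ cover the complement $(M\times\{\mathbf{t}\})\setminus H_1^{-1}(L_\epsilon(\Delta^q))$. The natural candidates are the indices $j_i:=\lfloor Nt_i\rfloor+1$ for $i=1,\ldots,q$, because in the defining formula for $u_i$ the unique partition-of-unity function carrying the fractional part $Nt_i-\lfloor Nt_i\rfloor$ is precisely $\mu_{j_i}$. Once these $q$ indices are identified, the claim to verify is
\[
\{x\in M:(x,\mathbf{t})\notin H_1^{-1}(L_\epsilon(\Delta^q))\}\subset B_\epsilon(\Delta^q)\cup\bigcup_{i=1}^q \mathrm{supp}(\mu_{j_i}).
\]

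First I would argue by contrapositive: assume $x\in M\setminus B_\epsilon(\Delta^q)$ and $\mu_{j_i}(x)=0$ for every $i$, and aim to produce $(y,\mathbf{s})\in(M\setminus B_\epsilon(\Delta^q))\times V(\Delta^q)$ with $H_1(y,\mathbf{s})=H_1(x,\mathbf{t})$. Since $H_1$ fixes the $M$-coordinate, I am forced to take $y=x$; the right choice for $\mathbf{s}$ is the ``snapped'' vertex $\mathbf{s}:=(\lfloor Nt_1\rfloor/N,\ldots,\lfloor Nt_q\rfloor/N)$. A short sanity check confirms that $\mathbf{s}\in V(\Delta^q)$: the monotonicity $t_1\leq\cdots\leq t_q$ and the bounds $0\leq t_i\leq 1$ pass through the floor function, and $N\mathbf{s}\in\bZ^q$ by construction.

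Next I would compare $H_1(x,\mathbf{s})$ and $H_1(x,\mathbf{t})$ coordinate by coordinate. Plugging $\mathbf{s}$ into $u_i$ kills the fractional term because $Ns_i$ is integral, yielding $u_i(x,\mathbf{s})=\nu_{\lfloor Nt_i\rfloor}(x)$; on the $\mathbf{t}$ side the hypothesis $\mu_{j_i}(x)=0$ also kills the fractional correction, so $u_i(x,\mathbf{t})=\nu_{\lfloor Nt_i\rfloor}(x)$ as well. The two images therefore coincide, and because $x\notin B_\epsilon(\Delta^q)$ the common image lies in $L_\epsilon(\Delta^q)$, contradicting the standing assumption $(x,\mathbf{t})\notin H_1^{-1}(L_\epsilon(\Delta^q))$.

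No serious obstacle is expected here: the lemma is essentially a bookkeeping check unpacking the piecewise-affine structure of $H_1$, using only that $\nu_j$ is the running sum and that each $u_i$ depends on $t_i$ through a single fractional correction carried by $\mu_{j_i}$. The one subtle point is distinguishing $L_\epsilon(\Delta^q)$ from the larger set $L(\Delta^q)$, which is exactly where the hypothesis $x\notin B_\epsilon(\Delta^q)$ is used to guarantee that the snapped witness pair $(x,\mathbf{s})$ lies in the smaller domain $(M\setminus B_\epsilon(\Delta^q))\times V(\Delta^q)$.
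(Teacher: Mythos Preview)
Your proof is correct and follows essentially the same approach as the paper's. The paper's own proof is a terse two-sentence sketch saying the result is ``straightforward from the definitions'' and that ``one for each coordinate of $\Delta^q$'' among the partition of unity functions suffices; you have simply supplied the explicit indices $j_i=\lfloor Nt_i\rfloor+1$ and the snapped vertex $\mathbf{s}$ that make this precise, together with the observation that passing from $L(\Delta^q)$ to $L_\epsilon(\Delta^q)$ only costs the extra ball $B_\epsilon(\Delta^q)$.
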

\begin{proof}
This is straightforward from the definitions. As in Figure \ref{frag}, the complement of the gray area in each slice $M\times {\bf t}$ can be covered by the support of one function from
the chosen partition of unity. In general, the complement of  $H^{-1}_1(L(\Delta^q))$ in the slice $M\times {\bf t}$ can be covered by the support of at most $q$ functions (one for each coordinate of $\Delta^q$) among the partition of unity functions. Given that $H_1$ preserves the $M$ factor, to cover the complement of $H^{-1}_1(L_{\epsilon}(\Delta^q))$ in the slice $M\times {\bf t}$, we only need to add $B_{\epsilon}$.
\end{proof}
Now we want to use this lemma to prove \Cref{fragment}. To deform a family of sections of $\pi: E\to M$, parametrized by a map $g:K\to \text{\textnormal{Sect}}_c(\pi)$, we consider its adjoint as a map $G: M\times K\to E$. We also define the support of $g$ over $K$ with respect to the base section $s_0$ as follows.
\begin{defn}
Let $\text{supp}(g|_K)$  consist of the closure of those points $x\in M$ for which there exists at least one $t\in K$ such that $G(x,t)\neq s_0(x)$.
\end{defn}

 We shall need the following lemma that uses the fiber of the map $\pi:E\to M$ is $(n-1)$-connected to prove \Cref{fragment}.
\begin{lem}\label{lem1}
Given a family  $g:D^q\to \text{\textnormal{Sect}}_c(\pi)$, there exists a homotopy $g_s: D^q\to \text{\textnormal{Sect}}_c(\pi)$ so that for all $t\in D^q$ and $s\in [0,1]$, we have $\text{supp}(g_s(t))\subset \text{supp}(g(t))$ and at time $1$, the adjoint $G_1$ of $g_1$ satisfies $G_1(L_{\epsilon}(D^q))=s_0(M)$. 
\end{lem}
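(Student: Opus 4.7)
The plan is to construct the homotopy $g_s$ by an obstruction-theoretic argument on $L_\epsilon(D^q)$, exploiting two facts: this subset has homotopical dimension at most $n-1$, and the fiber $F$ of $\pi$ is $(n-1)$-connected.

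First, I would view the adjoint $G\colon M\times D^q\to E$ of $g$ as a lift through $\pi$ of the projection $p\colon M\times D^q\to M$, and seek a fiberwise homotopy on $L_\epsilon(D^q)$ from $G|_{L_\epsilon(D^q)}$ to $s_0\circ p|_{L_\epsilon(D^q)}$. Such a homotopy is the same datum as a section, along the map $L_\epsilon(D^q)\to E\times_M E$, $(x,t)\mapsto (G(x,t), s_0(x))$, of the fibration of fiberwise paths in $E$ with prescribed endpoints. The fiber of this fibration over any fixed pair of endpoints is the based loop space $\Omega F$, which is $(n-2)$-connected by hypothesis on $F$. Since $L_\epsilon(D^q)$ has the homotopy type of an $(n-1)$-dimensional CW complex, the only potentially nonvanishing obstructions to producing such a section lie in groups of the form $H^k(L_\epsilon(D^q);\pi_k(F))$ for $1\le k\le n-1$, and these vanish by the connectivity hypothesis. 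This yields the desired fiberwise homotopy $K\colon L_\epsilon(D^q)\times [0,1]\to E$.

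Next, I would extend $K$ to a homotopy of the full family using a tubular neighborhood $N$ of $L_\epsilon(D^q)$ in $M\times D^q$ and the homotopy extension property. Fix a continuous cutoff $\rho\colon M\times D^q\to [0,1]$ equal to $1$ on $L_\epsilon(D^q)$ and supported in $N$, and arrange (using the good base section condition of \Cref{goodsection}) that $\rho$ vanishes on a neighborhood of the set where $G(x,t)=s_0(x)$ that is disjoint from $L_\epsilon(D^q)$. Then define $G_s(x,t)=K(x,t,\rho(x,t)s)$ on $N$ and $G_s(x,t)=G(x,t)$ outside $N$. At $s=1$, $G_1$ agrees with $s_0\circ p$ on $L_\epsilon(D^q)$ as required, and the choice of $\rho$ ensures that $\text{supp}(g_s(t))\subset \text{supp}(g(t))$ for every $t$ and $s$.

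The main obstacle is the tight connectivity bookkeeping in the first step: Mather's original argument in \cite{mather2011homology} assumed $F$ is $n$-connected, and the improvement to $(n-1)$-connected hinges precisely on the observation that $M\setminus B_\epsilon(D^q)$ deformation retracts onto an $(n-1)$-dimensional spine, so that $L_\epsilon(D^q)$, despite having topological dimension $n$, has homotopical dimension only $n-1$. A secondary subtlety is arranging the tubular neighborhood $N$ so that the support containment holds for every parameter $t\in D^q$ uniformly; this is where the good base section condition is essential, since it allows the cutoff interpolation near $\partial N$ to be performed inside a fiberwise contractible neighborhood of $s_0(M)$ in $E$ without introducing spurious support.
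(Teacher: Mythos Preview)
Your overall strategy---obstruction theory on $L_\epsilon(D^q)$ using its homotopical dimension $n-1$, followed by homotopy extension---is exactly the paper's, and your connectivity bookkeeping in step~1 is correct. But there is a gap in how you enforce the support containment. Your cutoff $\rho$ is designed to vanish near points of $\{G=s_0\circ p\}$ lying \emph{off} $L_\epsilon(D^q)$, yet $\rho\equiv 1$ on $L_\epsilon(D^q)$ itself. At a point $(x_0,t_0)\in L_\epsilon(D^q)$ with $G(x_0,t_0)=s_0(x_0)$, your fiberwise homotopy $K(x_0,t_0,\cdot)$ is merely a loop in the fiber based at $s_0(x_0)$, and nothing in your section-of-path-space argument forces this loop to be constant. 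Since $\rho(x_0,t_0)=1$, you get $G_s(x_0,t_0)=K(x_0,t_0,s)$, which may leave $s_0(x_0)$ and thereby put $x_0$ into $\text{supp}(g_s(t_0))$ even though $x_0\notin\text{supp}(g(t_0))$. The good base section condition does not help here: it lets you retract a neighborhood of $s_0(M)$ onto $s_0(M)$, but the loop $K(x_0,t_0,\cdot)$ need not stay in that neighborhood.

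The paper avoids this by building the constraint in from the start. Writing $N=\{(x,t):G(x,t)=s_0(x)\}$, it prescribes the partial section $\tilde G$ to equal $s_0(x)$ on $N\times[0,1]$ \emph{before} invoking obstruction theory, and then extends over $L_\epsilon(D^q)\times[0,1]$ relative to what is already defined, in particular relative to $(L_\epsilon(D^q)\cap N)\times[0,1]$. Since $L_\epsilon(D^q)\times[0,1]$ still has homotopical dimension $n$ and the fiber is $(n-1)$-connected, the relative obstructions vanish just as before, and a final application of HEP extends $\tilde G$ to all of $M\times D^q\times[0,1]$ with the support condition automatic. To repair your argument, simply run your step~1 relative to $L_\epsilon(D^q)\cap N$, insisting that $K$ be the constant homotopy at $s_0$ there; step~2 then reduces to a single HEP with no need for the cutoff gymnastics.
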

\begin{proof}
We think of the desired homotopy $G_t:M\times D^q\to E$ as a section of the pullback of $\pi: E\to M$ over $M\times D^q\times [0,1]$. The map $G_0$ is the adjoint of $g$. Let $Z\subset M\times D^q$ be the subcomplex consisting of points $(x,t)$ so that $G_0(x,t)=s_0(x)$. By the homotopy extension property, we will obtain the desired homotopy $G_t$, if we show that $G_0$ can be extended to a section $\tilde{G}$ over $$M\times D^q\times\{0\}\cup Z\times [0,1]\cup L_{\epsilon}(D^q)\times [0,1],$$ so that $\tilde{G}$ on $M\times D^q\times\{0\}$ is the same as $G_0$, on $Z\times [0,1]$ is given by $\tilde{G}(x,t,s)=s_0(x)$ and on $L_{\epsilon}(D^q)\times \{1\}$ is also given by $\tilde{G}(x,t,1)=s_0(x)$.  So far, we know how to define $\tilde{G}$ on $M\times D^q\times\{0\}\cup Z\times [0,1]$. We extend it over $L_{\epsilon}(D^q)\times [0,1]$ with a prescribed value on $ L_{\epsilon}(D^q)\times \{1\}$, by obstruction theory. Note that the homotopical dimension of $ L_{\epsilon}(D^q)\times [0,1]$ is $n$ and the fiber of the pullback of $\pi$ over $M\times D^q\times [0,1]$ is $(n-1)$-connected. Hence all obstruction classes that live in $H^*( L_{\epsilon}(D^q)\times [0,1]; \pi_{*-1}(\text{fiber}))$ vanish and we obtained the desired extension $\tilde{G}$. 
\end{proof}
\subsection{Proof of \Cref{fragment}} The idea is roughly as follows. To deform a family $g\colon D^q\to \text{Sect}_c(\pi)$ to a family of sections in $\text{Sect}_{\epsilon}(\pi)$, we  use \Cref{lem1} to assume that for the family $g$, we have $G(L_{\epsilon}(D^q))=s_0(M)$. We then use the fragmentation homotopy  to deform this family so that for each $s\in D^q$, the section $g(s)$ sends  ``most" part of $M$ to $G(L_{\epsilon}(D^q))$. For example in Figure \ref{frag}, for each $s\in D^1$, the support of the  section $g(s)$ lies inside the support of one function from
the partition of unity which can be chosen to be very small. 

 More precisely, we shall prove that homotopy groups of the pair $(\text{Sect}_c(\pi), \text{Sect}_{\epsilon}(\pi))$ are trivial. To do so, we show that for any commutative diagram
\begin{equation}\label{d1}
\begin{gathered}
\begin{tikzpicture}[node distance=2cm, auto]
  \node (A) {$S^{q-1}$};
  \node (B) [right of=A] {$\text{\textnormal{Sect}}_{\epsilon}(\pi)$};
  \node (C) [below of=A, node distance=1.2cm] {$ D^{q}$};  
  \node (D) [below of=B, node distance=1.2cm] {$ \text{\textnormal{Sect}}_c(\pi),$};
  \draw[->] (C) to node {$g$} (D);
  \draw [right hook->] (A) to node {}(C);
  \draw [->] (A) to node {$f$} (B);
  \draw [->] (B) to node {$$} (D);
\end{tikzpicture}
\end{gathered}
\end{equation}
there exists a homotopy of pairs $(g_t,f_t): (D^{q}, S^{q-1})\to (\text{Sect}_c(\pi), \text{Sect}_{\epsilon}(\pi))$ so that $f_0=f$, $g_0=g$ and $g_1:D^q\to \text{Sect}_c(\pi)$ factors through $\text{Sect}_{\epsilon}(\pi)$.  We first use the condition in \Cref{goodsection} to satisfy the following.
\begin{claim}
Note that for all $x\in S^{q-1}$, the support of $f(x)$ can be covered by at most $k$ balls of radius $2^{-k}\epsilon$ for some $k$. But we can also change $f$ up to homotopy to $f'$ such that for sufficiently fine triangulation of $S^{q-1}$, we can assume that for every simplex $\sigma\subset S^{q-1}$, we can cover $\text{\textnormal{supp}}(f'|_{\sigma})$  by  at most $k$ balls of radius $2^{-k}\epsilon$ for some $k$. 
\end{claim}
This is because there exists a fiberwise homotopy $h_t:E\to E$ that is the identity on $s_0(M)$ and whose time $1$ maps a neighborhood of $s_0(M)$ onto $s_0(M)$. So we can define a homotopy $F_t(x,s)=h_t(F(x,s)), G_t(x,s)=h_t(G(x,s))$ where $F$ and $G$ are adjoints of $f$ and $g$ respectively. These maps give a homotopy of the diagram \ref{d1} and it is easy to see for every $s\in S^{q-1}$, there exists a neighborhood $\sigma$ of $s$ so that $\text{supp}({F_1}|_{\sigma})\subset \text{supp}(f)(s)$. So from now on we assume that $f$ satisfies the claim. $\blacksquare$

To deform the family $g:D^q\to \text{Sect}_c(\pi)$ to a family in $\text{Sect}_{\epsilon}(\pi)$, we choose a partition of unity $\{\mu_i\}$ for a neighborhood  of $\text{supp}(g|_{D^q})$ so that  each $\text{supp}(\mu_i)$ can be covered by a ball of radius $2^{-q-1}\epsilon$. Let $H_t:M\times D^q\to M\times D^q$ be the fragmentation homotopy associated with this partition of unity. 

By \Cref{lem1}, there exists a homotopy $G': M\times D^q\times [0,1/2]\to E$ so that $G'_0$ is the adjoint of $g$, for all $s\in D^q$ and $t\in [0,1/2]$, we have $\text{supp}(G'_t(s))\subset \text{supp}(g(s)) $ and at time $1/2$, we have $G_{1/2}(L_{\epsilon}(D^q))=s_0(M)$. Note that if $s\in S^{q-1}$, then $G'_t(s)$ lies in $\text{Sect}_{\epsilon}(\pi)$. Therefore, $G'_t$ gives a homotopy of the pairs $(D^{q}, S^{q-1})\to (\text{Sect}_c(\pi), \text{Sect}_{\epsilon}(\pi))$.

Now we use the fragmentation homotopy to define $G_t: M\times D^q\to E$
\[
G_t:= \begin{cases}
G'_t & 0\leq t\leq 1/2,\\ G'_1\circ H_{2t-1} & 1/2 \leq t \leq 1. 
\end{cases}
\]

To show that $G_t$ is the desired homotopy, we first need to show that $G_t(-,S^{q-1})$ is also in $\text{Sect}_{\epsilon}(\pi)$ for $1/2 \leq t \leq 1$. Recall that by the claim, for every $x\in S^{q-1}$, there exists a simplex $\sigma$ containing $x$ so that $\text{supp}(f|_{\sigma})$ is contained in at most $k$ balls of radius $2^{-k}\epsilon$ for some $k$. We showed that  $\text{supp}(G'_1|_{\sigma})$ also has the same property.  Since the fragmentation homotopy preserves the $M$ factor, $\text{supp}(G'_1\circ H_{2t-1}|_{\sigma})$ also has the same property. Hence, $G_t(-,S^{q-1})$ lies in $\text{Sect}_{\epsilon}(\pi)$. So $G_t$ induces a homotopy of the pair of the map $(g,f)$. 

Now it is left to show that $G_1(-,s)$ lies in $\text{Sect}_{\epsilon}(\pi)$ for all $s\in D^q$. Note that the section $G_1(-,s)$ is the same as the base section on $H_1^{-1}(L_{\epsilon}(D^q))\cap M\times\{s\}$. Hence, by \Cref{property} the support of $G_1(-,s)$ can be covered by $q+1$ balls of radius $2^{-q-1}\epsilon$. Therefore, $G_1(-,s)$ is $\text{Sect}_{\epsilon}(\pi)$ for all $s\in D^q$.
\begin{rem} As we mentioned in the introduction, Morrison and Walker in their blob homology paper (\cite[Theorem 7.3.1]{MR2978449}) dropped the connectivity assumption but relaxed the notion of support to prove a key deformation lemma (\cite[Lemma B.0.4]{MR2978449}).  For a family $F:D^k\to  \text{\textnormal{Sect}}_c(\pi)$, they say $F$ is supported in $S\subset M$ if $F(p)(x)$ does not depend on $p$ for $x\notin S$. Our notion of support, however, requires $F(p)(x)$  to be equal to the value of the base section at $x$ for $x\notin S$. 

Note that when we drop the connectivity hypothesis, we no longer have \Cref{lem1}. However, for each ${\bf t} \in \Delta^q$, by \Cref{property}, we know that $(M\times {\bf t})\backslash H^{-1}_1(L_{\epsilon}(\Delta^q))$ is covered by at most $q$ open sets. Therefore, the same deformation $G_t$ as above, deforms a $\Delta^q$-family of sections to sections whose supports, in the sense of (\cite[Lemma B.0.4]{MR2978449}), can be covered by $q$ open balls.
\end{rem}
Note that $\text{\textnormal{Sect}}_{\epsilon}(\pi)$ which is a subspace of $ \text{\textnormal{Sect}}_c(\pi)$ has a natural filtration whose filtration quotients are similar to the filtration quotients induced by the non-abelian Poincar{\' e} duality (see \cite[Theorem 5.5.6.6]{lurie2016higher}). 

We in fact show in Appendix \ref{sec2} that this theorem implies the non-abelian Poincar{\' e} duality  for the space of sections of $\pi: E\to M$. To recall its statement,  let $\text{Disj}(M)$ be the  poset of the open subsets of $M$ that are homeomorphic to a disjoint union of finitely many open disks. For an open set $U\in \text{Disj}(M)$, let $\text{Sect}_c(U)$ denote the subspace of sections that are compactly supported and their supports are covered by $U$.  Although the non-abelian Poincar{\' e} duality holds for topological manifolds, to use the fragmentation idea, we assume that $M$ admits a metric for which there exists $\epsilon>0$ such that all balls of radius $\epsilon$ is geodesically convex. For example, this holds for all compact smooth manifolds.
\begin{cor}[Nonabelian Poincar{\' e} duality]\label{nonabelian}
If the fiber of the map $\pi$ is $(n-1)$-connected, the natural map
\[
\underset{U\in\text{Disj}(M)}{\textsf{hocolim }} \text{\textnormal{Sect}}_c(U)\to \text{\textnormal{Sect}}_c(\pi),
\]
is a weak homotopy equivalence. 
\end{cor}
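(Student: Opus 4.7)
The plan is to deduce the corollary from \Cref{fragment} via a relative lifting argument for the natural map
\[
\phi\colon \hocolim_{U\in\mathcal{U}(M)}\text{\textnormal{Sect}}_c(U)\longrightarrow \text{\textnormal{Sect}}_c(\pi).
\]
I would show that every relative pair $(g,f)\colon (D^q,S^{q-1})\to(\text{\textnormal{Sect}}_c(\pi),\hocolim_U\text{\textnormal{Sect}}_c(U))$ admits, after a homotopy of $g$ rel $S^{q-1}$, a lift $\widetilde{g}\colon D^q\to\hocolim_U\text{\textnormal{Sect}}_c(U)$ extending $f$. This vanishing of the relative homotopy groups gives that $\phi$ is a weak equivalence.

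First, I would apply the fragmentation deformation constructed in the proof of \Cref{fragment} to produce a homotopy $G_s\colon D^q\to\text{\textnormal{Sect}}_c(\pi)$ with $G_0=g$, compatible on $S^{q-1}$ with the preexisting lift $f$, as in the relative statement of that theorem. By \Cref{property}, after this deformation the support of $G_1(t)$ is contained, for each $t\in D^q$, in the union of at most $q+1$ geodesically convex balls of radius $2^{-q-1}\epsilon$; iterating the fragmentation with $\epsilon$ taken small enough makes these covering balls pairwise disjoint at each $t$, so that their union becomes a bona fide element of $\mathcal{U}(M)$.

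Second, I need to assemble these $t$-dependent elements of $\mathcal{U}(M)$ into a single map $D^q\to\hocolim_U\text{\textnormal{Sect}}_c(U)$. The key observation is that if $U\in\mathcal{U}(M)$ contains the support of $G_1(t_0)$, then after slightly enlarging $U$ (inside $\mathcal{U}(M)$) it contains the support of $G_1(t)$ for $t$ in a whole neighborhood of $t_0$, by openness of the ``supported in $U$'' condition in the compact-open topology — this mirrors the microfibration behavior of $\phi$. By compactness of $D^q$ we obtain a finite cover by such neighborhoods, and a subordinate partition of unity on $D^q$ produces a simplex of the nerve of the poset $\mathcal{U}(M)$, hence the desired map $\widetilde{g}\colon D^q\to\hocolim_U\text{\textnormal{Sect}}_c(U)$. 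Agreement with $f$ on $S^{q-1}$ is built in because on $S^{q-1}$ the family $G_s$ was controlled by $f$ throughout the deformation.

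The main obstacle is exactly this family-coherence step: the balls produced by \Cref{property} move with $t$, so one must patch local choices of $U\in\mathcal{U}(M)$ consistently over the parameter space. This is microfibration-style bookkeeping, essentially the same pattern used in the proof of \Cref{fragment} and in the blob-complex arguments of \cite{MR2978449}; once the partition-of-unity patching is written down, the identification of the assembled map as a simplex of the homotopy colimit is formal, and the connectivity hypothesis on the fiber is used only through its appearance in \Cref{fragment}.
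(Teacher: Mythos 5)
Your overall plan---deform a disk of sections into $\text{\textnormal{Sect}}_{\epsilon}(\pi)$ via \Cref{fragment} and then parametrize the resulting supports by elements of $\mathcal{U}(M)$ to build a map into the homotopy colimit---is the right shape of argument, and it is close in spirit to the paper's proof in Appendix \ref{sec2}. But two steps as written are incorrect, and they are exactly the steps the paper spends its technical lemmas on.

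First, the claim that ``iterating the fragmentation with $\epsilon$ taken small enough makes these covering balls pairwise disjoint at each $t$'' is false. Fragmentation gives you, for each $t\in D^q$, a cover of $\text{supp}(G_1(t))$ by at most $q+1$ balls of radius $2^{-q-1}\epsilon$, but there is no reason for these balls to be disjoint, and shrinking $\epsilon$ or iterating does not change that: the cover is by $q+1$ balls whatever $\epsilon$ is, and they can always intersect. What is true, and what the paper actually uses (see the Claim inside the proof of \Cref{colim}), is that a union of $k$ geodesically convex balls of radius $2^{-k}\epsilon$ can be covered by at most $k$ \emph{disjoint} geodesically convex balls of radius at most $\epsilon$: group the small balls into connected clusters; a connected cluster of $r$ balls sits inside one ball of radius $2^{-k+r-1}\epsilon$, and the factor $2^{-k}$ was chosen precisely so that these enlarged balls over distinct clusters remain disjoint. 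You need to replace your iteration claim with this convex-hull/cluster argument to land in $\mathcal{U}(M)$.

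Second, and more seriously, the patching step rests on the assertion that ``the `supported in $U$' condition'' is open in the compact-open topology. It is not. If $\text{supp}(s)\subset U$, a nearby section $s'$ can agree with $s_0$ only approximately outside $U$, so $\text{supp}(s')$ will generally leak out of $U$. Consequently the sets $\{s:\text{supp}(s)\subset U\}$ do not form an open cover of $\text{\textnormal{Sect}}_{\epsilon}(\pi)$, and the partition-of-unity argument for assembling the map $D^q\to\hocolim_U\text{\textnormal{Sect}}_c(U)$ does not get off the ground. This is precisely why the paper introduces the notion of \emph{lax support} (\Cref{cond}, \Cref{lax}): the condition ``lax support lies in $U$'' \emph{is} open, and \Cref{lax} shows that passing to lax support does not change the homotopy type. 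The paper then runs your ``microfibration-style bookkeeping'' rigorously: \Cref{micro} shows the map $\hocolim_{\mathcal{O}_\epsilon(M)}\widetilde{\text{\textnormal{Sect}}}_\epsilon(-)\to\widetilde{\text{\textnormal{Sect}}}_\epsilon(\pi)$ is a Serre microfibration with contractible fibers, and \Cref{colim} compares the indexing posets by a cofinality argument. Your proposal would become correct if you inserted the lax-support replacement before the patching step and replaced the partition-of-unity heuristic with the microfibration lemma; as written, both the disjointness claim and the openness claim are gaps.
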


\section{On h-principle theorems whose formal sections have highly connected fibers}\label{sec4}Let us recall the set-up from the introduction. Let $F: (\mathsf{Mfld}^{\partial}_n)^{op}\to \mathsf{S}$ be a topologically invariant sheaf in the sense of \cite[Section 2]{kupers2017three} from the category of smooth $n$-manifolds (possibly with nonempty boundary) with smooth embeddings as morphisms to a convenient category of spaces $\mathsf{S}$   (see \cite[Appendix A]{kupers2017three}). For our purpose, it is enough to consider the category of simplicial sets or compactly generated Hausdorff spaces. For brevity, when we refer to a simplicial set as a space, we mean the geometric realization of it.  Recall that we defined the space of formal solutions  $F^f(M)$ to be the space of sections of the bundle $\text{Fr}(M)\times _{\mathrm{GL}_n(\bR)} F^f(\bR^n)\to M$, where $\text{Fr}(M)$ is the frame bundle of $M$.  We say $F$ satisfies an h-principle if  the natural map from the functor to its homotopy sheafification (see \cite[Proposition 7.6]{MR3138384})
\[
j:F(M)\to F^f(M),
\]
 induces a weak equivalence and we say $F$ satisfies the c-principle if the above map induces a homology isomorphism. 
 
Often in proving h- and c-principles theorems, proving that the local statement $F(D^n)\xrightarrow{\simeq} F^f(D^n)$ which is a statement for $0$-handles is the easy step.   The hard step often is to inductively deduce the statement for higher handles relative to their attaching maps. Then one could prove the statement for compact manifolds using handle decompositions.  Thurston, however, proved a c-principle theorem in foliation theory (see \cite{mather2011homology} and \cite{sergeraert1979bgamma}) using his fragmentation idea without using the corresponding local statement. Proving the local statement in this c-principle theorem is surprisingly very subtle and it was later proved by Segal (\cite{segal1978classifying}) for smooth foliations and McDuff (\cite{mcduff1981groups}) for foliations with transverse volume form when the codimension is larger than $2$!
 
Let us first recall Thurston's theorem in this language. Let $F: (\mathsf{Mfld}^{\partial}_n)^{op}\to \text{sSet}$ be the functor from manifolds with a possibly non-empty boundary to simplicial sets so that the $q$-simplices  $F_{q}(M)$ is the set of codimension $n$ foliations on $M\times \Delta^q$ that are transverse to the fibers of  $M\times \Delta^q\to \Delta^q$. Let $F_c: \mathsf{Mfld}^{\partial}_n\to \text{sSet}$ be  {\it the compactly supported} version of $F$ meaning that we impose the condition that the foliations on $M\times \Delta^q$ are horizontal near the boundary $\partial M\times \Delta^q$. 
 
 Since in this case $F^f(M)$ is given by the section space of a bundle over $M$ whose fiber is $F^f(D^n)$, one could make sense of the compactly supported version by choosing a base section. In fact, there is a canonical choice of the base section so that we could define a map
 \[
j:F_c(M)\to F_c^f(M).
 \]
 Thurston uses his fragmentation technique on the closed disk $D^n$ to show directly (instead of induction on handles and inductively deloop) that $$|F_{c,\bullet}(\text{int}(D^n))|\to |F^f_{c,\bullet}(\text{int}(D^n))|$$ is a homology isomorphism. Recall that for the right-hand side, we have the weak homotopy equivalence $|F^f_{c,\bullet}(\text{int}(D^n))|\simeq \Omega^n|F^f_{\bullet}(D^n)|$.  
  
 Given the above delooping statement,  Thurston showed that this statement and the fragmentation on $M$ implies that $|F_{\bullet}(M)|\to |F^f_{\bullet}(M)|$ is a homology isomorphism for all compact manifolds $M$. If $M$ has a boundary, there is a version relative to the boundary. His fragmentation technique avoids the usual delooping steps in other approaches to go inductively from the statement for a handle of index $i$ to that of a handle of index $i+1$ and also avoids the step for $0$-handles. 
 
 
 To recall the main theorem, let $F$ be a topologically invariant sheaf enriched over $\mathsf{S}$  meaning that the sheaf is space valued and restriction maps are continuous. Suppose that there is a canonical base element in $F(N)$ for each manifold $N$ so that for a manifold with boundary $M$, we can define the relative version $F(M,\partial)$ to be the subspace of those elements in $F(M)$ that restrict to the base element in the germ of the boundary. We can also define {\it the compactly supported} version $F_c(M)$ to be the subspace of $F(M)$ consisting of those elements that restrict to the base element outside of a compact subset of $M$. Similarly, we can define the relative and compactly supported versions for $F^f$ so that we have a map $F_c(M)\to F_c^f(M)$. Similar to the previous section, we can define {\it $\epsilon$-supported versions} $F_{\epsilon}(M)$ and $F_{\epsilon}^f(M)$. We need to impose a homotopy theory condition on $F$ similar to the condition in \Cref{goodsection}. It is easy to see that this condition is satisfied for all geometric examples in the introduction and it will be necessary to find a simplicial resolution for $F$ in \Cref{resolution} and as we shall explain this is also a technical oversight in Mather's note. 
\begin{defn} \label{cond2} We say $F$ is well-pointed if for every manifold $M$ there exists a base point $s_0(M)\in F(M)$ and an open neighborhood $V_M$ of $s_0(M)$ such that 
\begin{itemize}
\item The open set $V_M$ deformation retracts to $s_0(M)$.
\item If $U\subset M$ is an open subset, the restriction $r\colon F(M)\to F(U)$ sends $s_0(M)$ to $s_0(U)$ and  $r({V_M})$ also is an open set that deformation retracts to $s_0(U)$.
\end{itemize}
\end{defn}
\begin{defn}\label{ha}
Given the base point $s_0(M)$ and the neighborhood $V_M$, we define the subspace $\widetilde{F_c(M,V_M)}\subset F(M)$ consisting of all elements $s$ such that there exists a compact set $K\subset M$ where the restriction of $s$ to $F(M\backslash K)$ lies in the restriction of $V_M$ to $M\backslash K$. These elements are said to be lax and compactly supported.
\end{defn}
Note that being compactly supported means that, for some compact set $K\subset M$, the restriction of $s$ to $F(M\backslash K)$ coincides with the base point. 
%
 \begin{defn}\label{condi}
We say that a well-pointed $F$ is {\it good}, if it satisfies 
\begin{enumerate}
\item The subspace of elements with empty support in $F(M)$ is contractible. 
\item There exists a neighborhood $V_M$ that deformation retracts to $s_0(M)$ such that the inclusion $F_c(M)\to \widetilde{F_c(M,V_M)}$ is a weak equivalence.
\item  Let $U$ be an open subset  of a manifold $M$ and let $r\colon F(M)\to F(U)$ be the restriction map. For all such $U$, the inclusion $\widetilde{F_c(U,r(V_M))}\to \widetilde{F_c(M,V_M)}$ be  an open embedding. 
\item For each finite family of open sets $U_0, U_1,\dots U_k$ such that $U_1,\dots, U_k$ are pairwise disjoint and contained in $U_0$, we have a permutation invariant map
\[
\mu^{U_0}_{U_1,\dots, U_k}\colon \prod_{i=1}^k F_c(U_i)\to F_c(U_0),
\]
where this map satisfies the obvious associativity conditions and for $U_0=\bigcup_{i=1}^{k} U_i$, the map $\mu^{U_0}_{U_1,\dots, U_k}$ is a weak equivalence.
\item Let $U$ and $V$ be open disks.  All embeddings $U\hookrightarrow V$ induces a homology isomorphism between $F_c(U)$ and  $F_c(V)$.
\item Let $\partial_1$ be the northern-hemisphere boundary of $D^n$.  Let $F(D^n, \partial_1)$ be the subspace of $F(D^n)$ that restricts to the base element in a germ of $\partial_1$ inside $D^n$. We assume $F(D^n, \partial_1)$ is contractible.  
\end{enumerate}
  \end{defn}
 \begin{thm}\label{h}
 Let $F$ be a good functor such that $F(D^n)\xrightarrow{\simeq} F^f(D^n)$. We assume that these spaces  are at least $(n-1)$-connected and $F$ has the fragmentation property meaning that 
 \[
 F_{\epsilon}(M)\to F_c(M),
 \]
 is a weak homotopy equivalence for every small enough $\epsilon>0$. Then for any compact manifold $M$, the map
 \[
 F_c(M)\to F_c^f(M),
 \]
 is a homology isomorphism. 
 \end{thm}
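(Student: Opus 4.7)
The plan is to apply Thurston's fragmentation strategy: build compatible augmented semi-simplicial resolutions of $F_c(M)$ and $F_c^f(M)$ whose realizations compute the targets, and whose $p$-simplex spaces are homology equivalent by virtue of the local statement $F(D^n)\simeq F^f(D^n)$.

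First I would reduce to the $\epsilon$-supported versions on both sides. On the holonomic side this is the fragmentation hypothesis. On the formal side, \Cref{fragment} applied to the section bundle $\mathrm{Fr}(M)\times_{\mathrm{GL}_n(\bR)}F^f(D^n)\to M$, whose fiber $F^f(D^n)$ is $(n-1)$-connected by hypothesis, gives that $F_\epsilon^f(M)\hookrightarrow F_c^f(M)$ is a weak equivalence. Thus it suffices to show that $F_\epsilon(M)\to F_\epsilon^f(M)$ is a homology isomorphism.

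Second, following Mather's account of Thurston's argument, I would resolve $F_\epsilon(M)$ by an augmented semi-simplicial space $F_\epsilon^\bullet(M)$ whose $p$-simplices parametrize pairs consisting of an element $s\in F_\epsilon(M)$ together with an ordered tuple of small geodesically convex open balls $(U_0,\ldots,U_p)$ whose union covers $\mathrm{supp}(s)$. The multiplication maps $\mu^{U_0}_{U_1,\ldots,U_k}$ and their permutation invariance in condition (4) make the augmentation $|F_\epsilon^\bullet(M)|\to F_\epsilon(M)$ a weak equivalence: each $p$-simplex space is, up to weak equivalence, a product $\prod_i F_c(U_i)$, and these glue together consistently whenever $U_0=\bigcup_i U_i$. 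The analogous construction on the formal side yields an acyclic resolution $(F_\epsilon^f)^\bullet(M)\to F_\epsilon^f(M)$, which is essentially non-abelian Poincar\'e duality (\Cref{nonabelian}) rephrased simplicially.

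Third I would compare the two resolutions levelwise. A $p$-simplex space on either side is, up to homotopy, a product over the chosen balls of $F_c(U_i)$ or $F_c^f(U_i)$. By condition (5), any embedding of open disks induces a homology isomorphism on $F_c$, so the local statement $F(D^n)\xrightarrow{\simeq} F^f(D^n)$ together with the $(n-1)$-connectedness implies $F_c(U_i)\to F_c^f(U_i)$ is a homology isomorphism for each small ball. The spectral sequence of the simplicial filtration (or equivalently a double-complex argument) then upgrades the levelwise homology equivalence of resolutions to a homology equivalence on the geometric realizations, finishing the proof.

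The main obstacle will be verifying the acyclicity of the holonomic-side resolution, which requires delicately combining the good-functor axioms. Condition (6) on the acyclicity of $F(D^n,\partial_1)$ is what enables one to collapse the relative-to-boundary pieces that appear when supports of simplicial components touch, while conditions (1)--(3) provide the openness and local deformation-retract structure needed to run a microfibration-type gluing argument, and the well-pointedness of \Cref{cond2} is what ensures the simplicial resolution has the correct homotopy type rather than only a homotopy-coherent one. For $M$ with nonempty boundary, one works relative to a collar and uses condition (6) to handle the boundary contribution; this is precisely the subtle point flagged in Mather's note.
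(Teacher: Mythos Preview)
Your reduction to the $\epsilon$-supported versions and your global step are essentially correct and match the paper's approach in Section~\ref{global} (and the alternative sketched in the Remark after it): once one knows that $F_c(U)\to F_c^f(U)$ is a homology isomorphism for every open ball $U$, a semisimplicial resolution of $F_\epsilon(M)$ and $F_\epsilon^f(M)$ by small balls reduces the statement for $M$ to the statement for balls.

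The gap is in your third paragraph. You assert that the hypothesis $F(D^n)\xrightarrow{\simeq}F^f(D^n)$, together with $(n-1)$-connectedness and condition~(5), implies that $F_c(U)\to F_c^f(U)$ is a homology isomorphism for an open ball $U$. These are entirely different statements: $F_c^f(U)\simeq\Omega^n F^f(D^n)$ is an $n$-fold loop space, while $F_c(U)\simeq F(D^n,\partial)$ has no a~priori description as a loop object of $F(D^n)$. Passing from the non-relative equivalence $F(D^n)\simeq F^f(D^n)$ to the compactly supported homology isomorphism $F(D^n,\partial)\to\Omega^n F(D^n)$ is exactly the ``$n$-fold delooping via fragmentation'' that is the heart of the proof; condition~(5) only compares $F_c$ on nested disks and says nothing about this.

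What the paper does for the disk case is a bootstrap. One filters both $F_\epsilon(D^n)$ and $F_\epsilon^f(D^n)$ by the number of covering balls and applies the filtered-space lemma (\Cref{blueprint}): if the bottom map $j_1$ is $k$-acyclic, then the induced map on $N$-th filtration quotients is $(2N+k-2)$-acyclic, and the spectral sequence forces $j_1$ to be a homology isomorphism in all degrees. The required acyclicity gain on filtration quotients is obtained by comparing them to a thickened suspension functor $\nu_k$ (\Cref{sss}) and invoking the combinatorial estimate of \Cref{acyc}; it is here that conditions~(1), (5), and~(6) actually enter. Only after this bootstrap is the map $F_c(\text{ball})\to F_c^f(\text{ball})$ available, at which point your global resolution argument goes through.
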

 \begin{exmp}
 Let $\Gamma_n^{\text{vol}}$ denote the topological Haefliger groupoid whose objects are $\bR^n$ with the usual topology and the space of morphisms are local volume-preserving diffeomorphisms of $\bR^n$ with respect to the standard volume form (see \cite{haefliger1971homotopy} for more details on how this groupoid is topologized). Let $\mathrm{B}\Gamma_n^{\text{vol}}$  denote its classifying space. There is a map 
\[
\theta: \mathrm{B}\Gamma_n^{\text{vol}}\to \mathrm{BSL}_n(\bR),
\]
which is induced by the functor $\Gamma_n^{\text{vol}}\to \mathrm{SL}_n(\bR)$ that sends a local diffeomorphism to its derivative at its source. We denote the homotopy fiber of $\theta$ by $\overline{\mathrm{B}\Gamma_n^{\text{vol}}}$. 
Let $M$ be an $n$-dimensional manifold with possibly non-empty boundary with a fixed volume form $\omega$.  Let $\tau^*(\theta)$ be the bundle over $M$ given by the pullback of $\theta$ via the map $\tau$

\begin{equation}\label{eq1}
\begin{gathered}
 \begin{tikzpicture}[node distance=2cm, auto]
  \node (C)  {$M$};  
  \node (D) [right of=C, node distance=2cm] {$\mathrm{BSL}_n(\bR).$};
    \node (B)  [above of=D, node distance=1.3 cm] {$\mathrm{B}\Gamma_{n}^{\text{vol}}$};

  \draw[->] (C) to node {$\tau$} (D);
  \draw [->] (B) to node {$\theta$} (D);
\end{tikzpicture}
\end{gathered}
\end{equation}
which is the classifying map for the tangent bundle. The space of sections of $\tau^*(\theta)$ has a natural base point $s_0$. Let $\text{Sect}(\tau^*(\theta),\partial)$ to be those sections that are equal to $s_0$ in the germ of the boundary (see \cite[Section 5.1]{nariman2014homologicalstability} for more details). It was proved by Haefliger that the fiber of $\tau^*(\theta)$ is $(n-1)$-connected. Note that $\text{Sect}(\tau^*(\theta),\partial)$ is not connected.

Let $\Diff_{\omega}(M,\partial)$ be the group of volume preserving diffeomorphisms with $C^{\infty}$-topology. And let $\dDiff_{\omega}(M,\partial)$ be the same group with the discrete topology. Now let $\overline{\BDiff_{\omega}(M,\partial)}$ denote the homotopy fiber of the natural map
\[
\BdDiff_{\omega}(M,\partial)\to \BDiff_{\omega}(M,\partial),
\]
induced by the identity homomorphism. This space can be thought of as the space of foliated $M$-bundles with a transverse volume form. It is easy to check the conditions in \Cref{condi} except the second condition which is proved by McDuff in \cite{MR699012}.  McDuff (\cite{mcduff1981groups}) showed that $\BdDiff_{\omega}(\bR^n)\to \mathrm{B}\Gamma_{n}^{\text{vol}}$ is a homology isomorphism for $n>2$ and it still not known for $n=2$. So the local statement in this case is known for $n>2$. She used this fact to show that when $\text{dim}(M)>2$
\[
\overline{\BDiff_{\omega}(M,\partial)}\to \text{Sect}(\tau^*(\theta),\partial),
\]
 induces a homology isomorphism into the connected component that it hits. She also found a different proof for $\text{dim}(M)=2$ in \cite{MR707329}. But by using \Cref{h}, one could give a uniform proof for the compactly supported version without using her local statement in dimension $3$ and higher. 
 \end{exmp}
   
\begin{exmp}\label{conf} Let $M$ be a manifold of positive dimension and let $F(M)$  be the labeled configuration space (\cite{MR922926}, \cite{segal1973configuration}) for which proving the fragmentation property is easy. To recall the definition from \cite{MR922926}, let $X$ be a fixed connected CW complex with a base point $x_0$. Let $C(M;X)$ be the configuration space of a finite number of distinct points in $M$ with labels in $X$ and the topology is such that points can vanish if their label is $x_0$ (For a precise definition of the topology see \cite{MR922926}, \cite{segal1973configuration}). We shall write a point $\xi\in C(M;X)$ as a formal sum $\sum x_im_i$ where $m_i\in M$ are distinct points and $x_i\in X$ satisfying the relation $\sum x_im_i\sim \sum x_im_i+x_0m$. For a subspace $N\subset M$, we let $C(M,N;X)$ be the quotient of $C(M;X)$ by the relation $\sum x_im_i\sim \sum x_im_i+xn$ where $n\in N$. We define the support of $\sum x_im_i$ to be the set of the points $m_i$ whose label $x_i$ is not the base point $x_0$. Note that, similar to section spaces, we can define the subspace $C_{\epsilon}(M;X)$ to be that labeled configuration of points whose support can be covered by $k$ balls of radius $2^{-k}\epsilon$ for some $k$. But obviously we have $C_{\epsilon}(M;X)=C(M;X)$.

 It is easy to show that $C(D^n,\partial D^n; X)$ is homotopy equivalent to the reduced suspension $\Sigma^nX$ which is at least $n$-connected.  The fragmentation method implies that the natural scanning map (see \cite[Definition 6.3.5]{knudsen2018configuration})
 \[
 C(D^n;X)\to \Omega^nC(D^n,\partial D^n; X),
 \] 
 is a homology isomorphism (it is in fact a weak homotopy equivalence by \cite{segal1973configuration}). Using fragmentation again for $C(M;X)$, we could obtain the homological version of McDuff's theorem (\cite{MR0358766}) that for any closed manifold $M$, the natural map $$C(M;X)\to\text{Sect}_c(\text{Fr}(M)\times _{\mathrm{GL}_n(\bR)} \Sigma^nX\to M),$$ induces a homology isomorphism. 
\end{exmp}

\subsection{$n$-fold delooping via fragmentation}\label{disk} The key step in proving \Cref{h} is to show that if $F$ has a fragmentation property then the map
\[
F(D^n,\partial)\to F^f(D^n,\partial)\simeq \Omega^nF(D^n),
\]
is a homology isomorphism. To do so, we filter $F(D^n)$ and $F^f(D^n)$. Since $D^n$ is compact, the fragmentation property for $F$ and $F^f$ implies that 
\[
F_{\epsilon}(D^n)\xrightarrow{\simeq} F(D^n),
\]
\[
F^f_{\epsilon}(D^n)\xrightarrow{\simeq} F^f(D^n).
\]
The spaces $F_{\epsilon}(D^n)$ and $F^f_{\epsilon}(D^n)$ are naturally filtered by the number of balls that cover the supports. We shall denote these filtrations and the corresponding maps between them by
\begin{equation}\label{R}
  \begin{gathered}
\begin{tikzcd}
F_1(D^n) \arrow{d}{j_1}\arrow[r,hook] &F_2(D^n) \arrow{d}{j_2}\arrow[r,hook] &\cdots \arrow[r,hook] & F_{\epsilon}(D^n) \arrow{d}{j}\arrow{r}{\simeq}&F(D^n)\arrow{d}{\simeq}\\
F^f_1(D^n)\arrow[r,hook] &F^f_2(D^n) \arrow[r,hook] &\cdots \arrow[r,hook] & F^f_{\epsilon}(D^n)\arrow{r}{\simeq}&F^f(D^n).
\end{tikzcd}
\end{gathered}
\end{equation}
Note that the last vertical map is a weak equivalence because $F^f(D^n)$ is a section space of a bundle over contractible space $D^n$ with the fiber $F(D^n)$.  Therefore, the map $j$ in the diagram \ref{R} also is a weak homotopy equivalence. 
\begin{rem}
We dropped $\epsilon$ from our notations for filtrations $F_k(-)$ and $F^f_k(-)$ but if we want to emphasize our choice of $\epsilon$, we shall instead use $F_k(-,\epsilon)$ and $F^f_k(-,\epsilon)$.
\end{rem}

\begin{prop}\label{j_1}
Let $F$ be a good functor satisfying the hypothesis of \Cref{h}. Now if $j_1$ in the diagram \ref{R} induces a homology isomorphism, so does the map 
\[
F(D^n,\partial)\to F^f(D^n,\partial)\simeq \Omega^nF(D^n).
\]
\end{prop}
We first explain the strategy to prove that $j_1$ is a homology isomorphism before we embark on proving \Cref{j_1}. We have the following general lemma about filtered spaces (\cite[Lemma 2, Section 27]{mather2011homology}):
\begin{lem}\label{blueprint}
Consider the commutative diagram of spaces
\[
\begin{tikzcd}
X_1 \arrow{d}{f_1}\arrow[r,hook] &X_2 \arrow{d}{f_2}\arrow[r,hook] &\cdots \arrow[r,hook] & X_{\infty} \arrow{d}{f_{\infty}}\arrow{r}{\iota}&X\arrow{d}{f}\\
Y_1\arrow[r,hook] &Y_2 \arrow[r,hook] &\cdots \arrow[r,hook] & Y_{\infty}\arrow{r}{\iota'}&Y.
\end{tikzcd}
\]
Suppose:
\begin{itemize}
\item $X_{\infty}$ and $Y_{\infty}$ are the union of $X_i$'s and $Y_i$'s respectively and for each $i$, the pairs $(X_i, X_{i-1})$ and $(Y_i, Y_{i-1})$ are good pairs.\footnote{The pair $(A,B)$ of topological spaces where $B\subset A$ is a good pair if there exists an open neighborhood of $B$ in $A$ such that it deformation retracts to $B$.}
\item $f$, $\iota$ and $\iota'$ are weak homotopy equivalences.
\item  The filtration is so that if $f_1$ is $k$-acyclic \footnote{we say $f:A\to B$ is $k$-acyclic if it induces a homology isomorphism  for homological degrees less than $k$ and surjection on degree $k$} for some $k$ then the induced map 
\[
\overline{f_N}:X_N/X_{N-1}\to Y_N/Y_{N-1},
\]
is $(2N+k-2)$-acyclic for every integer $N>1$.
\end{itemize}
Then $f_1$ induces a homology isomorphism.
\end{lem}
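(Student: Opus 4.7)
The plan is to bootstrap on the acyclicity of $f_1$: I will show that if $f_1$ is $k$-acyclic for some integer $k\geq -1$, then $f_1$ is $(k+1)$-acyclic. Since the third hypothesis of the lemma is itself a conditional statement allowing the improved acyclicity to be fed back in, iterating from the vacuous base case $k=-1$ yields that $f_1$ is $k$-acyclic for every $k$, hence a homology isomorphism.

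Assume $f_1$ is $k$-acyclic. Because $\iota,\iota',f$ are weak equivalences, $f_\infty\colon X_\infty\to Y_\infty$ is a homology isomorphism. The good-pair hypothesis produces strongly convergent spectral sequences
\[
E^1_{p,q}(X)=H_{p+q}(X_p,X_{p-1})\cong \tilde H_{p+q}(X_p/X_{p-1}) \;\Longrightarrow\; H_{p+q}(X_\infty),
\]
and likewise for $Y$, with $\overline{f_p}$ inducing a morphism between them. Restricting to the sub-filtration $p\geq 2$ gives spectral sequences abutting to $H_*(X_\infty,X_1)$ and $H_*(Y_\infty,Y_1)$. By the third hypothesis $\overline{f_p}$ is $(2p+k-2)$-acyclic, so on $E^1_{p,q}$ the comparison is an isomorphism for $p+q<2p+k-2$ and surjective when $p+q=2p+k-2$. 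At a fixed total degree $n$ the binding constraint is the smallest allowed $p$, namely $p=2$: this gives iso for $n\leq k+1$ and surjection at $n=k+2$, while larger $p$ only broaden the range. Since the differentials $d^r$ decrease total degree by one, the comparison map on $E^\infty$ is iso in total degrees $\leq k+1$ and surjective in degree $k+2$, so
\[
H_*(X_\infty,X_1)\longrightarrow H_*(Y_\infty,Y_1)
\]
is $(k+2)$-acyclic.

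A standard five-lemma on the long exact sequences of the pairs $(X_\infty,X_1)$ and $(Y_\infty,Y_1)$, using that $f_\infty$ is iso and that the relative map is $(k+2)$-acyclic, then yields that $f_1$ is iso on $H_i$ for $i\leq k$, and a four-lemma variant yields surjectivity on $H_{k+1}$. Thus $f_1$ is $(k+1)$-acyclic. Feeding this back into the third hypothesis with $k+1$ in place of $k$ and repeating the argument gives $(k+2)$-acyclicity, and so on; starting from the vacuous base $k=-1$ completes the bootstrap and concludes the proof.

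The main obstacle is the spectral-sequence bookkeeping in the middle step: one must check that surjectivity at the edge degree $n=k+2$ genuinely survives all the differentials, and that the filtration on the abutment agrees with the one appearing in the long exact sequence of the pair. Both are standard consequences of strong convergence guaranteed by the good-pair hypothesis, but they constitute the most delicate technical point of the argument.
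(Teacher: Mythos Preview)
Your argument is correct. The bootstrap via comparing the two filtration spectral sequences for $p\ge 2$ converging to $H_*(X_\infty,X_1)$ and $H_*(Y_\infty,Y_1)$, followed by the five/four-lemma against the isomorphism $f_\infty$, does produce the improvement from $k$-acyclic to $(k+1)$-acyclic, and the conditional form of the third hypothesis lets you iterate. The delicate point you flag---that surjectivity at the boundary degree survives the differentials---is easily checked by induction on $r$, since at total degree $k+1$ the incoming differentials have sources where the comparison is surjective and targets where it is iso.

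The paper takes a somewhat different route. It replaces each $f_i$ by an inclusion via the mapping cylinder and forms a \emph{single} spectral sequence of the filtered pair $(Y_\infty,X_\infty)$, with
\[
E^1_{p,q}=H_{p+q}(Y_p,\,Y_{p-1}\cup X_p),
\]
converging to $H_*(Y_\infty,X_\infty)\cong H_*(Y,X)=0$. It then argues by contradiction: if $k$ is minimal with $E^1_{1,k}=H_{k+1}(Y_1,X_1)\neq 0$, then $f_1$ is $k$-acyclic, so the third hypothesis forces $E^1_{p,q}=0$ for $q\le p+k-2$; inspection of the resulting vanishing region shows that no differential can ever hit $E^*_{1,k}$, contradicting convergence to zero. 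The two arguments encode the same information---your comparison of two spectral sequences plus the five-lemma is exactly what is packaged into the single relative spectral sequence---but the paper's version trades the explicit inductive bootstrap and the long-exact-sequence chase for a one-shot contradiction read off from a picture of the $E^1$-page.
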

\begin{proof}
We can assume that the maps $f_i$ are inclusions by replacing them with the mapping cylinder of $f_i$.  Therefore, the filtration $(Y_p,X_p)$ of $(Y_{\infty},X_{\infty})$ gives rise to a spectral sequence whose first page is $$E^1_{p,q}=H_{p+q}(Y_p, Y_{p-1}\cup X_p).$$ It converges to the homology of the pair $(Y_{\infty},X_{\infty})$ but this pair is weakly homotopy equivalent to the pair $(Y,X)$. Since the first condition $f$ is a weak homotopy equivalence, the spectral sequence converges to zero.  \begin{figure}[h]
\[
\begin{tikzpicture}
\draw [<-] (-0.6,4.7) -- (-0.6,0.2);
\draw [->] (-1,0.6)-- (4.6,0.6);
\node  at (0.2,0.9) {\small$0$};
\node  at (0.2,0.3) {$1$};
\node at (1.2,0.9) {\small$0$};
\node at (1.2,0.3) {\small$2$};

\node at (2.5,0.9) {\small$0$};
\node at (2.5,0.3) {\small$3$};

\node at (3.7,0.9) {\small$0$};
\node at (3.7,0.3) {\small$4$};



\node at (4.8,0.3) {\small$p$};

\node at (0.2,1.5) {\small$\vdots$};
\node at (-1,1.5) {\small$\vdots$};
\node at (-1,0.9) {\small$0$};

\node at (0.2,2.2) {\small$0$};
\node at (-1,2.2) {\small$k-2$};

\node at (0.2,3.2) {\small$0$};
\node at (-1,3.2) {\small$k-1$};

\node at (0.2,4.2) {\small$*$};
\node at (-1,4.2) {\small$k$};

\node at (0.2,5.2) {\small$$};
\node at (-1,4.9) {\small$q$};

\node at (1.2,2.2) {\small$0$};
\node at (2.5,2.2) {\small$0$};
\node at (3.7,2.2) {\small$0$};
\node at (1.2,3.2) {\small$0$};
\node at (2.5,3.2) {\small$0$};
\node at (1.2,4.2) {\small$0$};

\node at (1.2,1.5) {\small$\vdots$};
\node at (2.5,1.5) {\small$\vdots$};
\node at (3.7,1.5) {\small$\vdots$};
\draw [-,red] (3.6,2.3)--(0.3,4.2);
\draw [->,red] (2.3,3.2)--(0.3,4.2);
\draw [-,red] (1.1,4.2)--(0.3,4.2);

\end{tikzpicture}\]
\caption{All the differentials that map to $E^1_{1,k}$ have trivial domains. We drew differentials on the first, second, and third pages.}\label{s}
\end{figure}
Now we suppose the contrary that $f_1$ is not a homology isomorphism and we choose the smallest $k$ so that $E^1_{1,k}=H_{k+1}(Y_1,X_1)\neq 0$. Therefore, $f_1$ is $k$-acyclic and by the third condition $\overline{f_p}$ is $(2p+k-2)$-acyclic which implies that $E^1_{p,q}=H_{p+q}(Y_p, Y_{p-1}\cup X_p)=0$ for $q\leq p+k-2$. 

Hence, as is indicated in Figure \ref{s}, no nontrivial differentials can possibly hit $E^1_{1,k}$ which contradicts the fact that the spectral sequence converges to zero in all degrees.
\end{proof}
In order to apply \Cref{blueprint} to the diagram \ref{R}, we need to establish the second condition of \Cref{blueprint} for the diagram \ref{R}. The subtlety here is in the filtrations $F_k(-)$ and $F_k^f(-)$ where we know that the support is covered by $k$ small balls but the data of these balls are not given.  We shall define certain auxiliary spaces by adding the data of covering balls. 
\subsubsection{Semisimplicial resolutions}\label{ssSpace} To study the  filtration quotients in the diagram \ref{R}, we shall define auxiliary semisimplicial spaces. 

For the definition of semisimplicial spaces and the relevant techniques, we follow \cite{ebert2017semi}. Briefly, what we need about semisimplicial spaces and their (fat) realizations are as follows:  first, a semisimplicial map that is a weak homotopy equivalence in each degree induces a weak homotopy equivalence between fat realizations (\cite[Theorem 2.2]{ebert2017semi}), second, there is a skeletal filtration on the fat realization that gives rise to a spectral sequence calculating the homology of the fat realization (\cite[Section 1.4]{ebert2017semi}), and the last is the technical lemma in \cite[Proposition 2.8]{galatius2018homological} that gives a useful criterion to prove that the augmentation map for an augmented semisimplicial space induces a weak homotopy equivalence after taking realizations.

 Recall that  we assumed that $M$ is a geodesic space and a small positive $\epsilon$ exists so that all balls of radius $\epsilon$ are geodesically convex. We say that a subset $U$ of $M$ is {\it $\epsilon$-admissible} if it is open, geodesically convex and it can be covered by an open ball of radius $\epsilon$.
\begin{defn}\label{O}
Let $\mathcal{O}_{\epsilon}(M)$ be the discrete poset of open subsets of $M$ that can be covered by a union of $k$ geodesically convex balls of radius at most $2^{-k}\epsilon$ for some positive integer $k$.
\end{defn}
\begin{defn}
Let $CF_k(M)$ be the subspace of $F(M)^k$ consisting of $k$-tuples so that each one has support contained in one ball of radius $2^{-k}\epsilon$. We define the subspace $DF_k(M)$ of $CF_k(M)$ to be {\it degenerate} $k$-tuples  that is the union of their supports can be covered by $k_0$ balls of radius $2^{-k_0}\epsilon$ for some $k_0<k$. We denote the quotient space $CF_k(M)/DF_k(M)$ by $NF_k(M)$. Similarly, we can define $CF^f_k(M)$, $DF^f_k(M)$ and $NF^f_k(M)$. 
\end{defn}

The natural maps $NF_k(M)\to F_k(M)/F_{k-1}(M)$ and $NF^f_k(M)\to F^f_k(M)/F^f_{k-1}(M)$ are $(k!)$-sheeted covers away from the base points. So if $\Sigma_k$ denotes the permutation group on $k$ letters, we have the spectral sequence of the action whose $E^2$-page is $H_p(\Sigma_k;H_q( NF_k(M)))$ converging to $H_{p+q}(F_k(M)/F_{k-1}(M))$. Similarly, we have the same spectral sequence for $NF^f_k(M)$ and the comparison of the spectral sequences implies the following.
\begin{lem}\label{filtrationquotients}
If the induced map $NF_k(M)\to NF^f_k(M)$ is $j$-acyclic so is the map between the filtration quotients
\[
F_k(M)/F_{k-1}(M)\to F^f_k(M)/F^f_{k-1}(M).
\]
\end{lem}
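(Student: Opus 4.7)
The plan is to spell out the spectral sequence comparison that the paragraph immediately above the lemma alludes to. Recall the two Borel spectral sequences
\begin{equation*}
E^2_{p,q} = H_p(\Sigma_k; H_q(NF_k(M))) \Rightarrow H_{p+q}(F_k(M)/F_{k-1}(M)),
\end{equation*}
and its analogue $\tilde{E}^2_{p,q}$ with $NF^f_k(M)$ and $F^f_k(M)/F^f_{k-1}(M)$ in place of the above, both coming from the $\Sigma_k$-action permuting the $k$ coordinates together with the fact that the quotient map onto $F_k(M)/F_{k-1}(M)$ is a $k!$-sheeted cover away from the basepoint.

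First I would observe that the coordinate-wise natural transformation $\iota \colon F \to F^f$ makes the induced map $f \colon NF_k(M) \to NF^f_k(M)$ into a $\Sigma_k$-equivariant map of pointed spaces, so it induces a morphism between the two spectral sequences. Then I would form the equivariant mapping cone $C_f$; the hypothesis that $f$ is $j$-acyclic translates to $\tilde{H}_q(C_f) = 0$ for all $q \leq j$.

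Second, I would apply the Borel spectral sequence to the pointed $\Sigma_k$-space $C_f$ itself, giving $E^2_{p,q}(C_f) = H_p(\Sigma_k; \tilde{H}_q(C_f))$ converging to $\tilde{H}_{p+q}((C_f)_{h\Sigma_k})$. The vanishing $\tilde{H}_q(C_f) = 0$ for $q \leq j$ forces $E^2_{p,q}(C_f) = 0$ in that range, hence $\tilde{H}_n((C_f)_{h\Sigma_k}) = 0$ for all $n \leq j$. Since the $\Sigma_k$-action is free away from a single fixed basepoint, the homotopy quotient agrees with the strict orbit space on reduced homology, so $\tilde{H}_*((C_f)_{h\Sigma_k}) \cong \tilde{H}_*(C_f/\Sigma_k)$. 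Finally, $C_f/\Sigma_k$ is identified with the mapping cone of the quotient map $F_k(M)/F_{k-1}(M) \to F^f_k(M)/F^f_{k-1}(M)$ (this is where functoriality of the cone construction combined with the $k!$-sheeted cover statement is used), so the vanishing of its reduced homology in degrees $\leq j$ is exactly the desired $j$-acyclicity.

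The main obstacle, and really the only nonformal step, is the passage from the Borel construction to the strict orbit space. Formally the $\Sigma_k$-action fixes the basepoint so the action is not free, but since the fixed locus is a single point one can handle it by excising a small neighborhood of the basepoint, on whose complement the cover is honestly free and the homotopy quotient and strict quotient agree. Naive care with this point is the only subtlety; everything else is naturality of the spectral sequence and functoriality of equivariant mapping cones.
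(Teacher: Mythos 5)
Your proof is correct and is essentially the paper's argument: the paper proves this lemma in one line by ``comparison of the spectral sequences'' arising from the $\Sigma_k$-action on $NF_k(M)$ and $NF^f_k(M)$, and your equivariant mapping cone $C_f$ is precisely the clean way to make that comparison precise, reducing it to a vanishing statement for a single Cartan--Leray spectral sequence. The passage from the homotopy quotient to the strict orbit space is the same fact the paper already implicitly uses to assert that the Borel spectral sequence abuts to $H_*(F_k(M)/F_{k-1}(M))$, so your treatment introduces no new hypotheses beyond those of the paper.
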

Hence, to establish the third condition of \Cref{blueprint} for the diagram \ref{R}, it is enough to study the acyclicity of the map $NF_k(M)\to NF^f_k(M)$. To do so, we shall use the following semisimplicial spaces.
\begin{defn}
 Let ${CF_k(M)}_{\bullet}$ be a semisimplicial space whose space of $q$-simplices  is given by the tuples $(\sigma, (B_{ij}))$ where $\sigma=(\sigma_1,\dots,\sigma_k)\in CF_k(M)$ and $(B_{ij})$ is a $k\times (q+1)$  matrix of $(2^{-k}\epsilon)$-admissible sets such that $B_{ij}$ contains the support of $\sigma_i$ for all $j$ (if the support of $\sigma_i$ is empty $B_{ij}$'s are just $(2^{-k}\epsilon)$-admissible sets). We topologize the $q$-simplices as a subspace of $F(M)^k\times \mathcal{O}_{\epsilon}(M)^{kq+k}$.
\end{defn}
\begin{defn}\label{Delta}
 We define subsemisimplicial space $D{F_k(M)}_{\bullet}$ so that its $q$-simplices are given by pairs $(\sigma, (B_{ij}))$ so that for each $0\leq j\leq q$, the closure of $\cup_i B_{ij}$ is covered by $k_0$ balls of radius $2^{-k_0}\epsilon$ for some $k_0<k$.
\end{defn}
We define similarly ${CF^f_k(M)}_{\bullet}$ and $D{F^f_k(M)}_{\bullet}$.
\begin{rem}\label{epsilon}
If we keep track of  the choice of $\epsilon$ in our notations we have the  useful identifications ${CF_k(M,\epsilon)}_{\bullet}={CF_1(M,2^{-k}\epsilon)}_{\bullet}^k$ and the same for $F^f$. 
\end{rem}
\begin{lem}\label{resolution}
The natural maps
\[
||{CF_k(M)}_{\bullet}||\to CF_k(M), \, ||D{F_k(M)}_{\bullet}||\to DF_k(M),
\]
are all weak homotopy equivalencies where $||-||$ means the fat realization of a semi-simplicial space (see \cite{ebert2017semi}). Similarly, the corresponding statement holds for $F^f$.
\end{lem}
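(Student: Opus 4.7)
The plan is to apply the standard semisimplicial resolution technique: show that the augmentation $||CF_k(M)_{\bullet}||\to CF_k(M)$ is a Serre microfibration with contractible fibers, then invoke the criterion of Ebert--Randal-Williams (\cite[Theorem 2.14]{ebert2017semi}) to conclude that it is a weak equivalence. The same argument will then handle $DF_k(M)_{\bullet}$, $CF^f_k(M)_{\bullet}$, and $DF^f_k(M)_{\bullet}$ with only cosmetic changes.

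First I would compute the strict fiber over $\sigma=(\sigma_1,\dots,\sigma_k)\in CF_k(M)$. Because $\mathcal{O}_\epsilon(M)$ is endowed with the discrete topology, this fiber is the fat realization of a semisimplicial \emph{set} whose set of $q$-simplices is $S_\sigma^{q+1}$, where
\[
S_\sigma=\prod_{i=1}^k\bigl\{\,U\ :\ U\text{ is }(2^{-k}\epsilon)\text{-admissible and }\text{supp}(\sigma_i)\subset U\,\bigr\},
\]
with face maps given by deleting a column. Since $\sigma\in CF_k(M)$, each $\text{supp}(\sigma_i)$ lies in a ball of radius $2^{-k}\epsilon$, which one can enlarge slightly to produce an admissible set, so $S_\sigma\neq\emptyset$. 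The resulting semisimplicial set is the nerve of the codiscrete category on $S_\sigma$, whose fat realization is contractible: fixing $s\in S_\sigma$ and inserting $s$ as a new initial vertex provides an explicit contracting homotopy.

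Next I would verify the Serre microfibration property. Given a lift of $\{0\}\times D^q\to CF_k(M)$ to $||CF_k(M)_{\bullet}||$, locally on $D^q$ this lift is recorded by a fixed matrix $(B_{ij})$ of admissible sets, since $\mathcal{O}_\epsilon(M)$ is discrete. By condition (3) of \Cref{condi}, the subspace of $F(M)^k$ consisting of tuples with $\text{supp}(\sigma_i)\subset\bigcap_j B_{ij}$ is open in the lax sense of condition (2); well-pointedness (\Cref{cond2}) guarantees that this neighborhood genuinely opens around the original $\sigma$. Hence for each $x\in D^q$ the same matrix remains valid on some time interval $[0,\delta_x]$, and compactness of $D^q$ produces a uniform $\delta>0$ over which the lift extends.

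The arguments for $||DF_k(M)_{\bullet}||\to DF_k(M)$ and for the $F^f$-variants are essentially identical: the extra defining constraint of $DF_k(M)_{\bullet}$ only restricts the admissible matrices $(B_{ij})$, leaving the fiber computation and the microfibration verification intact, and $F^f_c(M)$ is a section space whose fiber satisfies the same axioms. The main obstacle is the second step: verifying the microfibration condition precisely enough to control supports as $\sigma$ varies continuously in $F(M)$. This is exactly why the well-pointedness hypothesis and the lax compactly supported variant $\widetilde{F_c(M,V_M)}$ were introduced in the definition of a \emph{good} functor---they upgrade ``$\text{supp}(\sigma)\subset B$'' into an open (or cofinally open) condition on $\sigma$, so that a single admissible matrix $(B_{ij})$ can serve as a lift over a whole neighborhood of any $\sigma$.
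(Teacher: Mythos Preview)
Your approach is essentially the same as the paper's: both prove the augmentation is a microfibration with contractible fibers and then appeal to the standard criterion. Your fiber computation matches the paper's exactly.

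There is one point where your argument is looser than the paper's, and it is precisely the step you flag as the ``main obstacle.'' You argue the microfibration property directly for $||CF_k(M)_\bullet||\to CF_k(M)$, saying that the condition $\mathrm{supp}(\sigma_i)\subset \bigcap_j B_{ij}$ is ``open in the lax sense'' and that well-pointedness then ``genuinely opens'' a neighborhood of $\sigma$. But strict support containment is \emph{not} an open condition on $\sigma$, and well-pointedness alone does not make it one; what condition~(2) of Definition~\ref{condi} gives you is only that the inclusion $F_c(M)\hookrightarrow \widetilde{F_c(M,V_M)}$ is a weak equivalence. So a nearby $\sigma'$ will have lax support in $\bigcap_j B_{ij}$, but there is no reason its strict support must lie there, and hence your proposed lift $(\sigma',(B_{ij}))$ need not land in $CF_k(M)_\bullet$ as defined.

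The paper handles this by making the substitution you allude to but never actually perform: it replaces $CF_k(M)$ by the lax variant $\widetilde{CF_k(M,U_M)}$ \emph{before} running the microfibration argument. For the lax spaces, condition~(3) of Definition~\ref{condi} makes $\widetilde{CF_k(M,U_M)}_\bullet\subset \widetilde{CF_k(M,U_M)}\times (S_\bullet)^k$ a genuinely open inclusion, and then \cite[Proposition~2.8]{galatius2018homological} yields the microfibration directly. Condition~(2) of Definition~\ref{condi} is used only to pass back and forth between strict and lax at the end. Your proposal has all the right ingredients, but you should make this zigzag through the lax variant explicit rather than trying to argue openness for the strict version.
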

\begin{proof}
This is  \cite[Lemma in section 20]{mather2011homology} for the functor defined by Thurston. But there is an oversight in that proof that Mather assumes that the augmentation map from the realization of semisimplicial sets to $F_k(M)$ is a fibration and says that it is enough to show that their fibers are contractible. To fix this oversight, we need \Cref{condi}. The idea is to show that the augmentation maps are microfibrations with contractible fibers. 

Let $\widetilde{CF_k(M, V_M)}$ be the subspace of $F(M)^k$ consisting of $k$-tuples such that each one has a lax support (see \Cref{ha})  in ball of radius $2^{-k}\epsilon$. Similarly, we define $\widetilde{CF_k(M,V_M)}_{\bullet}$.  Hence, it is enough to show that 
\[
\alpha\colon||\widetilde{CF_k(M,V_M)}_{\bullet}||\to \widetilde{CF_k(M, V_M)},
\]
is a weak homotopy equivalence. 

Let $S_{\bullet}$ be the simplicial set whose $q$-simplices are given by $q+1$ ordered  $(2^{-k}\epsilon)$-admissible sets. By the third condition of the goodness of the functor (see \Cref{condi}), it is clear that $\widetilde{CF_k(M,V_M)}_{\bullet}\subset\widetilde{CF_k(M,V_M)}\times (S_{\bullet})^k$ is open. Similar to the proof of \Cref{micro},  this inclusion satisfies the conditions of \cite[Proposition 2.8]{galatius2018homological}. Therefore, the map $\alpha$
induced by the projection to the first factor is microfibration. To identify the fiber over $\sigma=(\sigma_1,\dots,\sigma_k)$, let $S_i$ be the set of $(2^{-k}\epsilon)$-admissible sets containing the support of $\sigma_i$.  Let $S_{i\bullet}$ be the simplicial set whose $q$-simplices are given by mappings $[q]=\{0,1,\dots,q\}$ to $S_i$. Therefore, the realization of this simplicial set is contractible. The fiber over $\sigma$ can be identified with the fat realization of $S_{1\bullet}\times\cdots S_{k\bullet}$. Since the fat realization and the realization for the simplicial sets are weakly equivalent and the realization commutes with products (\cite{MR0084138}), we deduce that the fiber over $\sigma$ is contractible. The proof for the other augmentation map is similar. 
\end{proof}
Now the strategy to check the third condition of \Cref{blueprint} for the diagram \ref{R} is as follows. We define a functor $\nu_N$ on spaces so that when we apply it to a $k$-acyclic map $f:X\to Y$, we obtain a $(2N+k-2)$-acyclic map $\nu_N(f):\nu_N(X)\to \nu_N(Y)$. And then we construct a homotopy commutative diagram
\[ 
\begin{tikzpicture}[node distance=4cm, auto]
  \node (A) {$||{CF_k(D^n)}_{\bullet}||/||D{F_k(D^n)}_{\bullet}||$};
  \node (B) [right of=A] {$\nu_k(F(D^n,\partial))$};
  \node (C) [below of=A, node distance=1.7cm] {$ ||{CF^f_k(D^n)}_{\bullet}||/||D{F^f_k(D^n)}_{\bullet}||$};  
  \node (D) [below of=B, node distance=1.7cm] {$ \nu_k(F^f(D^n,\partial)),$};
  \draw[->] (C) to node {$$} (D);
  \draw [->] (A) to node {}(C);
  \draw [->] (A) to node {$$} (B);
  \draw [->] (B) to node {$$} (D);
\end{tikzpicture}
\]
where the horizontal maps induce  homology isomorphisms.  In the next section, we shall define a suitable functor $\nu_k$ satisfying the desired properties.
\subsubsection{A thick model of the suspension of a based space} To define the functor $\nu_k$ that receives a map from the above semisimplicial resolutions, we need to modify the definition of the suspension of a space. First, we define auxiliary simplicial sets associated with the manifold $M$ with the fixed choice of $\epsilon$.
\begin{defn}
Let $S(r)$ be the set of $(2^{-r}\epsilon)$-admissible sets in $M$. Let $\Delta_{\bullet}(M,r)$ denote the simplicial set whose $q$-simplices are given by mappings $[q]$ into $S(r)$ i.e. $(q+1)$-tuple of elements in $S(r)$. Let $M_{\bullet}(r)$ be the subsimplicial set of $\Delta_{\bullet}(M,r)$ whose $q$-simplices consist of those admissible sets that the intersection of the entries of the tuple is nontrivial. And let $\partial M_{\bullet}(r)$ be the subsimplicial set of $\Delta_{\bullet}(M,r)$ whose $q$-simplices consist of those admissible sets that the intersection of the entries of the tuple and $\partial M$ is nontrivial. 
\end{defn}
\begin{rem}
For each $k$, the geometric realizations of $\Delta_{\bullet}(M,r)$ are contractible because it is a full simplex, and the geometric realizations of $M_{\bullet}(r)$ and $\partial M_{\bullet}(r)$ by the nerve theorem have the homotopy type of $M$ and $\partial M$ respectively. 
\end{rem}
Our modification of the suspension of a space $X$ is
\begin{defn}
Let $\widetilde{\Sigma}^nX$ be the realization of the following semisimplicial space 
\[
\widetilde{\Sigma}_{\bullet}^nX(r):=\frac{(\Delta_{\bullet}(D^n,r)\times \{*\})\cup (D_{\bullet}^n(r)\times X)}{(t,x)\sim (t,x')\text{ if } t\in \partial D_{\bullet}^n(r)}.
\]
\end{defn}
Note that for each $r$, the space $\widetilde{\Sigma}^nX$ has the same homotopy type of the suspension $\Sigma^nX$ so we do not write the dependence on $r$. Because $D_{\bullet}^n(r)$ and $\partial D_{\bullet}^n(r)$ are semisimplicial sets that realize to the disk $D^n$ and the sphere $S^{n-1}$ respectively. And $\Delta_{\bullet}(D^n,r)\times \{*\}$ is a contractible semi-simplicial set that is glued to the base point. Note that we also have a natural projection $\pi:\widetilde{\Sigma}^nX(r)\to ||\Delta_{\bullet}(D^n,r)||$.
\begin{defn}\label{T}
Let $T_{k,\bullet}(M)$ be the subsimplicial set of $(\Delta_{\bullet}(M,k))^k$ whose $q$-simplices are given by matrices $(B_{ij})$, $i=0,1,\dots,q$, $j=1,\dots,k$ of admissible sets so that for each $i$, the union $\cup_j B_{ij}$ can be covered by $k_0$ open balls of radius $2^{-k_0}\epsilon$ for some $k_0<k$. For $k=1$, we define $T_{1,\bullet}=*$.
\end{defn}
\begin{defn}
We define $\theta_k(X)$ to be the pair $$((\widetilde{\Sigma}^nX)^k, (\pi^k)^{-1}(|T_{k,\bullet}(D^n)|)),$$ where $\pi^k:  (\widetilde{\Sigma}^nX)^k\to ||\Delta_{\bullet}(D^n, k)||^k$ is the natural projection. Let $\nu_k(X)$ denote the quotient 
\[
(\widetilde{\Sigma}^nX)^k/(\pi^k)^{-1}(|T_{k,\bullet}(D^n)|).
\]
\end{defn}
\begin{rem}\label{k=1}
Note that for $k=1$, the space $\nu_1(X)$ has the homotopy type of $\Sigma^nX$.
\end{rem}
We suppress $n$, the dimension from the notations $\theta_k(X)$  and $\nu_k(X)$ as  it is fixed throughout.  
The following technical lemma is the main property of the functor $\nu_k$.
\begin{lem}\label{acyc}
If $f:X\to Y$ is $j$-acyclic, the induced map of pairs $\nu_k(f): \nu_k(X)\to \nu_k(Y)$ is $(j+n+2k-2)$-acyclic. 
\end{lem}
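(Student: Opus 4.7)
The plan is to prove Lemma \ref{acyc} by induction on $k \geq 1$, following the strategy of Mather \cite{mather2011homology}.

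For the base case $k=1$, Remark \ref{k=1} identifies $\nu_1(X)$ and $\nu_1(Y)$ with $\Sigma^n X$ and $\Sigma^n Y$ respectively. Since $f\colon X \to Y$ is $j$-acyclic, the suspension isomorphism gives
\[
H_i(\nu_1(Y), \nu_1(X)) \cong H_i(\Sigma^n Y, \Sigma^n X) \cong H_{i-n}(Y, X),
\]
which vanishes for $i \leq j + n$. Hence $\nu_1(f)$ is $(j+n)$-acyclic, which is exactly $j + n + 2\cdot 1 - 2$.

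For the inductive step, I would factor $(\widetilde{\Sigma}^n X)^k = (\widetilde{\Sigma}^n X)^{k-1} \times \widetilde{\Sigma}^n X$ and decompose the subspace $(\pi^k)^{-1}(|T_{k,\bullet}(D^n)|)$ according to whether the $k$-th column of admissible balls is ``absorbed'' into a cover of the first $k-1$ columns or appears as a distinct covering ball. This yields a pushout (Mayer--Vietoris) presentation of $\nu_k(X)$ in terms of $\nu_{k-1}(X)$, the factor $\widetilde{\Sigma}^n X \simeq \Sigma^n X$, and a linking piece supported on the ``merging'' stratum. The associated long exact sequence of pairs, combined with the K\"unneth theorem on the product piece and the inductive hypothesis applied to $\nu_{k-1}(f)$, controls $H_*(\nu_k(Y), \nu_k(X))$ through the target range: the dominant contribution inherits acyclicity $(j + n + 2(k-1) - 2)$ from $\nu_{k-1}(f)$, and the correction from the merging locus supplies an additional $+2$, yielding the desired $(j + n + 2k - 2)$-acyclicity.

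The main obstacle is the geometric/combinatorial analysis of the merging stratum $|T_{k,\bullet}(D^n)| \setminus (|T_{k-1,\bullet}(D^n)| \times |\Delta_\bullet(D^n)|)$ and the verification of the pushout decomposition at the level of subsimplicial sets. The crucial dimensional fact is that, for a small admissible ball in $D^n$ to be absorbed into a cover of the other balls, one must align both its position and its scale, which cuts out a subset of codimension at least $2$ in the relevant strata of $(\Delta_\bullet(D^n))^k$. This codimension-$2$ slack is precisely what upgrades the naive shift $+n$ (coming from one suspension) to the claimed $+n + 2(k-1)$ after $k-1$ inductive steps. The bookkeeping involved is the direct analogue of the arguments carried out by Mather in the foliation setting \cite[Sections 25--28]{mather2011homology}, and transfers essentially verbatim to the abstract framework of a good sheaf $F$.
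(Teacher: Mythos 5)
Your base case is fine, but the inductive step does not reconstruct a valid argument, and it is not what the paper does.

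The paper's proof is not an induction on $k$. It first identifies $\nu_k(X)$, up to homotopy, with the quotient $(S^n\wedge X)^k/\Delta_{\text{fat},k}(S^n,X)$, where $\Delta_{\text{fat},k}$ is the locus with $s_i=s_j$ for some $i\neq j$, and then separates variables to get $\nu_k(X)\simeq \bigl(S^{nk}/\Delta_{\text{fat},k}(S^n)\bigr)\wedge X^{\wedge k}$. From this factorization the estimate is immediate: $f^{\wedge k}$ is $(j+k-1)$-acyclic by iterating the long exact sequence of a pair, and $S^{nk}/\Delta_{\text{fat},k}(S^n)$ is $(n+k-2)$-acyclic, which is proved by showing $\Delta_{\text{fat},k}(S^n)$ is $(n+k-3)$-acyclic via a Mayer--Vietoris spectral sequence for the closed cover by the partial diagonals $\Delta_{(i,j)}$. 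Smashing these two estimates together gives the $(j+n+2k-2)$-acyclicity in one shot.

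There are two genuine gaps in your proposal. First, you never pass from the combinatorial model built on $T_{k,\bullet}(D^n)$ (a covering condition on unions of admissible balls) to the fat-diagonal model on $(S^n\wedge X)^k$; without that identification, the ``merging stratum'' you want to analyze is a complicated covering locus whose homotopy type you do not control, and the proposed pushout/Mayer--Vietoris presentation of $\nu_k$ in terms of $\nu_{k-1}\times\widetilde{\Sigma}^n X$ is not established. Second, the ``codimension $2$'' heuristic is wrong as stated: after the fat-diagonal reduction, the merging condition is a coincidence $s_i=s_j$ of two points on $S^n$, which is a codimension-$n$ condition, not codimension $2$. The $+2$ per unit of $k$ in the bound does not come from a single codimension-$2$ stratum; it comes from two independent $+1$'s, namely the improvement $(n+k-3)\rightsquigarrow(n+k-2)$ in the acyclicity of the fat diagonal and the improvement $(j+k-2)\rightsquigarrow(j+k-1)$ in the acyclicity of $f^{\wedge k}$. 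As written, your inductive step asserts the target arithmetic but does not supply a mechanism that produces it.
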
 
\begin{proof} Recall that the reduced suspension of $X$ for a based space $(X,*)$, is the smash product $S^n\wedge X$ and we represent points in this smash product by a pair $(s,x)$ where $s\in S^n $ and $x\in X$. First it is not hard to see (\cite[Section 24]{mather2011homology}) that  the space $\nu_k(X)$ is homotopy equivalent to 
\[
(S^n\wedge X)^k/\Delta_{\text{\textnormal{fat}},k}(S^n,X),
\]
where $\Delta_{\text{\textnormal{fat}},k}(S^n,X)$ consists of tuples $\big( (s_1,x_1), (s_2, x_2),\dots, (s_k, x_k)\big )$ such that $s_i=s_j$ for some $i\neq j$. We can further simplify the homotopy type of $\nu_k(X)$ by separating $S^n$ and $X$ in the above quotient to obtain
\[
\nu_k(X)\simeq \big (S^{nk}/\Delta_{\text{\textnormal{fat}},k}(S^n)\big )\wedge X^{\wedge k}.
\]
Note that if $f:X\to Y$ is $j$-acyclic, the long exact sequence for the homology of a pair implies that the induced map $f^{\wedge 2}: X\wedge X\to Y\wedge Y$ is $(j+1)$-acyclic. Hence, one can inductively show that the induced map $f^{\wedge k}:X^{\wedge k}\to Y^{\wedge k}$ is $(j+k-1)$-acyclic. Thus, it is enough to show that $\big (S^{nk}/\Delta_{\text{\textnormal{fat}},k}(S^n)\big )$ is $(n+k-2)$-acyclic. Using again the long exact sequence for the homology of a pair, we need to show that $\Delta_{\text{\textnormal{fat}},k}(S^n)$ is $(n+k-3)$-acyclic.

For $i\neq j$, let $\Delta_{(i,j)}(S^n,k)\subset (S^n)^{\wedge k}$ be the subspace given by tuples $(s_1,s_2,\dots,s_k)$ where $s_i=s_j$.  Note that $\Delta_{(i,j)}(S^n,k)\simeq (S^n)^{\wedge (k-1)}$. The fat diagonal $\Delta_{\text{\textnormal{fat}},k}(S^n)$ is the union of $\Delta_{(i,j)}(S^n,k)\subset (S^n)^{\wedge k}$ for all pairs $(i,j)$ where $i\neq j$. These are not open subsets but they are sub-CW complexes, so we still can apply Mayer-Vietoris spectral sequence for this cover to compute the homology of $\Delta_{\text{\textnormal{fat}},k}(S^n)$. Let $\Delta_{(i_1,j_1),\dots, (i_r,j_r)}(S^n,k)$ denote the intersection $\Delta_{(i_1,j_1)}(S^n,k)\cap\dots\cap \Delta_{(i_r,j_r)}(S^n,k)$. Hence, we have 
\[
E_{p,q}^1=\bigoplus_{(i_m,j_m)}H_q(\Delta_{(i_0,j_0),\dots, (i_p,j_p)}(S^n,k))\Longrightarrow H_{p+q}(\Delta_{\text{\textnormal{fat}},k}(S^n)),
\]
where the sum is over different tuples of pairs $(i_m,j_m)$. Since the intersection  $\Delta_{(i_0,j_0),\dots, (i_p,j_p)}(S^n,k)$ is a $n(k-p-1)$-connective space, $E^1_{p,q}=0$ for $q<n(k-p-1)$. Note that $p$ is at most $k-2$ so we have $n+k-3< n(k-p-1)+p$. On the other hand, if $p+q<n(k-1)-pn+p$, we have $E^1_{p,q}=0$ which implies that $\Delta_{\text{\textnormal{fat}},k}(S^n)$ is $(n+k-3)$-acyclic. 
\end{proof}
Now we are ready to prove the third condition of \Cref{blueprint} for the diagram \ref{R}.
\subsubsection{Proof of \Cref{h} for $M=D^n$}\label{diskcase} So we want to prove that the natural map 
\[
F(D^n,\partial)\to F^f(D^n,\partial)\simeq \Omega^nF(D^n),
\] 
induces a homology isomorphism. To do this, we show that 
\begin{lem}\label{sss} There exists a commutative diagram of pairs
\begin{equation}\label{RR}
  \begin{gathered}
\begin{tikzpicture}[node distance=4.3cm, auto]
  \node (A) {$(||{CF_k(D^n)}_{\bullet}||,||D{F_k(D^n)}_{\bullet}||)$};
  \node (B) [right of=A] {$\theta_k(F(D^n,\partial))$};
  \node (C) [below of=A, node distance=1.7cm] {$ (||{CF^f_k(D^n)}_{\bullet}||,||D{F^f_k(D^n)}_{\bullet}||)$};  
  \node (D) [below of=B, node distance=1.7cm] {$ \theta_k(F^f(D^n,\partial)),$};
  \draw[->] (C) to node {$$} (D);
  \draw [->] (A) to node {}(C);
  \draw [->] (A) to node {$$} (B);
  \draw [->] (B) to node {$$} (D);
\end{tikzpicture}
\end{gathered}
\end{equation}
so that the horizontal maps are homology isomorphisms (by which we mean homology isomorphism on each member of the pair). 
\end{lem}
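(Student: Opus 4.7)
The plan is to realize both horizontal maps as the geometric realization of a levelwise comparison of semisimplicial spaces, and then to verify that this comparison is a homology isomorphism by analyzing its fibers over the space of admissible matrices. First I would construct the map at level $q$. Given a $q$-simplex $(\sigma,(B_{ij}))$ of ${CF_k(D^n)}_{\bullet}$ with $\sigma=(\sigma_1,\dots,\sigma_k)$, I associate to each $i\in\{1,\dots,k\}$ the row $(B_{i0},B_{i1},\dots,B_{iq})$, which is a $q$-simplex of $\Delta_{\bullet}(D^n)$. Because every $B_{ij}$ contains $\text{supp}(\sigma_i)$, whenever $\sigma_i\neq s_0$ these balls have nonempty common intersection and the row lies in $D^n_{\bullet}$; moreover the support of $\sigma_i$ sits in a ball of radius $2^{-k}\epsilon$ inside the interior of $D^n$, so $\sigma_i$ is canonically an element of $F(D^n,\partial)$. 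Pairing the row with $\sigma_i$ (and collapsing to the basepoint component $\Delta_{\bullet}(D^n)\times\{*\}$ whenever $\sigma_i=s_0$) produces a $k$-tuple of $q$-simplices in $\widetilde{\Sigma}^n_{\bullet}F(D^n,\partial)$, and hence a map $\phi_q$ from $(CF_k(D^n))_q$ to $((\widetilde{\Sigma}^nF(D^n,\partial))^k)_q$.

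Next I would check compatibility with face maps: both sides remove the $j$-th column $B_{\bullet j}$, so the $\phi_q$'s assemble into a continuous map $\phi\colon ||{CF_k(D^n)}_{\bullet}||\to(\widetilde{\Sigma}^n F(D^n,\partial))^k$. I would then verify that $\phi$ carries $||D{F_k(D^n)}_{\bullet}||$ into $(\pi^k)^{-1}(|T_{k,\bullet}(D^n)|)$. The degeneracy condition for $DF_k$ says that for every simplex index the union of the matrix entries over the element index is coverable by fewer than $k$ admissible balls of radius $2^{-k_0}\epsilon$, and this is precisely the condition cut out by $T_{k,\bullet}$ once one transposes the two indexing conventions. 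The same recipe applied to $F^f$ (viewing an element compactly supported in a small ball as an element of $F^f(D^n,\partial)$ by extending via the base section) defines the bottom horizontal map, and naturality of the inclusion $F\hookrightarrow F^f$ makes the square commute.

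To see that $\phi$ is a homology isomorphism on each member of the pair, I would compare the spectral sequences associated to the semisimplicial filtrations on both sides, using \Cref{resolution} to reduce to a levelwise comparison. At level $q$, after restricting to the fiber over a fixed admissible matrix $(B_{ij})$, the source is essentially $\prod_{i=1}^{k}F_c(\bigcap_j B_{ij})$ while the target is $F(D^n,\partial)^k$; goodness condition~(5) of \Cref{condi} identifies each factor in homology, and the K\"unneth theorem promotes this to a homology isomorphism of products. The relative version is obtained by restricting the same comparison to the subspaces cut out by the matching degeneracy condition on the matrix $(B_{ij})$.

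The main technical obstacle will be implementing the last step rigorously: neither side is literally a Serre fibration over the space of admissible matrices, so the fiberwise identification has to be phrased either through a careful bisimplicial spectral sequence comparison or, as in the proof of \Cref{resolution}, through a microfibration argument with contractible fibers. One must also verify that the goodness hypotheses produce a homology equivalence of fibers compatibly with the relative structure, so that the absolute and relative statements are treated simultaneously; this bookkeeping between the stratification by support size, the admissible-matrix fibration, and the extension-by-base-section identification is where the real work of the proof lives.
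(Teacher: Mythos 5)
Your overall blueprint agrees with the paper's: construct the comparison levelwise using the admissible balls, realize, and prove a homology isomorphism by ``fibering'' over the combinatorial data. Two things are worth flagging.

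First, you anticipate the main obstacle as the fact that neither side is a Serre fibration over the space of admissible matrices, and suggest resolving this via a bisimplicial spectral sequence or a microfibration argument as in \Cref{resolution}. But this difficulty does not actually arise, and \Cref{resolution} plays no role here. By definition (\Cref{O}), $\mathcal{O}_\epsilon(M)$ is a \emph{discrete} poset, and the $p$-simplices of ${CF_k(D^n)}_\bullet$ are topologized as a subspace of $F(D^n)^k \times \mathcal{O}_\epsilon(D^n)^{kp+k}$. Hence the level-$p$ space is literally a disjoint union indexed by matrices $(B_{ij})$, and the ``fiber'' identification is tautological. What the paper actually uses is the ordinary simplicial spectral sequence $E^1_{p,q}=H_q(X_p)\Rightarrow H_{p+q}(\|X_\bullet\|)$, which reduces to checking $f_p$ is a homology isomorphism for each $p$; no microfibration argument is needed. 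The paper also simplifies by first handling $k=1$ and then invoking the identification $CF_k(M,\epsilon)_\bullet = CF_1(M,2^{-k}\epsilon)_\bullet^k$ (\Cref{epsilon}) to promote to products, rather than fibering over the matrix data directly; both are fine, but the $k=1$ reduction keeps the fiber analysis one-dimensional.

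Second, the fiber analysis you give covers only the generic case, where each $\bigcap_j B_{ij}$ is nonempty and avoids $\partial D^n$, and there condition (5) of \Cref{condi} does the work. But there are two other cases the paper handles: (a) $\bigcap_j B_{ij}=\emptyset$, where the fiber is the subspace of elements with empty support and you need condition (1) (contractible) to conclude; and (b) all $B_{ij}$ meet $\partial D^n$, where the fiber lands in the relative subspace and you need condition (6) (acyclicity of $F(D^n,\partial_1)$) on the source, matched against a point on the target. Without those two cases the levelwise comparison is incomplete; they are exactly the two boundary strata of the decomposition of $\Delta_p(D^n)$ into $\partial D^n_p$, $D^n_p\setminus\partial D^n_p$, and the rest, and they are what the quotient defining $\widetilde{\Sigma}^n_\bullet$ was designed to absorb. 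Once you add these two cases, the argument closes.
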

Before we prove this lemma, let us explain how the above lemma finishes the proof of  \Cref{h} for $M=D^n$. By \Cref{resolution}, \Cref{acyc} and \Cref{sss}, the third condition of \Cref{blueprint} for the diagram \ref{R} holds. Hence, $j_1$ in the diagram \ref{R} is a homology isomorphism. Recall that for $k=1$, the pair $\theta_1(X)$ has the homotopy type of $(\Sigma^nX,*)$. Therefore, \Cref{j_1} follows from \Cref{sss} for $k=1$. 

\begin{con}To define the horizontal map in the diagram \ref{RR}, we first define a semi-simplicial map
\[
f_{\bullet}:{CF_1(D^n, 2^{-k}\epsilon)}_{\bullet}\to \widetilde{\Sigma}_{\bullet}^nF(D^n,\partial)(k)= \frac{(\Delta_{\bullet}(D^n, k)\times\{*\})\cup(D_{\bullet}^n(k)\times F(D^n,\partial))}{(t,x)\sim (t,x')\text{ if } t\in \partial D_{\bullet}^n(k)}.
\]
For a $q$-simplex $(\sigma, B_0,\dots,B_q)$ on the left hand side, we know that $\cap_i B_i$ contains $\text{supp}(\sigma)$. 

\begin{itemize}
\item If $(B_0,\dots, B_q)$ is a $q$-simplex in $\partial D^n_{\bullet}(k)$, then we send $(\sigma, B_0,\dots,B_q)$ to the base point on the right hand side. 
\item If $(B_0,\dots, B_q)$ is a $q$-simplex in $D^n_{\bullet}(k)$ but not in $\partial D^n_{\bullet}(k)$, then the support of $\sigma$ lies inside $D^n$. Therefore $\sigma\in F(D^n,\partial)$, so we send $(\sigma, B_0,\dots,B_q)$ to the corresponding element in $ F(D^n,\partial)\times D_{\bullet}^n(k)$. 
\item And if $(B_0,\dots,B_q)$ is in $\Delta_{\bullet}(D^n,k)$ but not in $D_{\bullet}^n(k)$, we send $(\sigma, B_0,\dots,B_q)$, to $(B_0,\dots,B_q)$ in $\Delta_q(D^n,k)\times \{*\}$. 
\end{itemize} 
Since the above map is a semisimplicial map, we could  take the realization to obtain 
\begin{equation}\label{*}
f:||{CF_1(D^n, 2^{-k}\epsilon)}_{\bullet}||\to \widetilde{\Sigma}^nF(D^n,\partial).
\end{equation}

Recall from \Cref{epsilon}, that ${CF_k(M,\epsilon)}_{\bullet}={CF_1(M,2^{-k}\epsilon)}^k$. Therefore, the above construction gives rise to maps
\[
||{CF_k(D^n)}_{\bullet}||\to (\widetilde{\Sigma}^nF(D^n,\partial))^k.
\]
\Cref{T} and \Cref{Delta} are so that the above map induces a map
\[
||D{F^f_k(D^n)}_{\bullet}||\to (\pi^k)^{-1}(||T_{k,\bullet}(D^n)||).
\]
The lower horizontal map in diagram \ref{RR} is similarly defined. 
\end{con}
\begin{proof}[Proof of \Cref{sss}] From the naturality of the construction, we obtain the commutative diagram \ref{RR}. We show that the top horizontal map is a homology isomorphism. The proof for the bottom horizontal map is similar. We first show that the map $f$ in \ref{*} induces a homology isomorphism. Recall that for all semisimplicial spaces $X_{\bullet}$ , there is a spectral sequence $E^1_{p,q}(X_{\bullet})=H_q(X_p)$ that converges to $H_{p+q}(||X_{\bullet}||)$. The map $f$ induces a comparison map between spectral sequence
\begin{equation}\label{eq:5}
\begin{gathered}
\begin{tikzcd}
H_q({CF_1(D^n, 2^{-k}\epsilon)}_{p})\arrow["(f_p)_*"]{r}\arrow[Rightarrow]{d}&H_q(\widetilde{\Sigma}_p^nF(D^n,\partial))\arrow[Rightarrow]{d}\\H_{p+q}(||{CF_1(D^n, 2^{-k}\epsilon)}_{\bullet}||)\arrow["f_*"]{r}& H_{p+q}(||\widetilde{\Sigma}_{\bullet}^nF(D^n,\partial)||).
\end{tikzcd}
\end{gathered}
\end{equation}
So to prove $f_*$ is an isomorphism, we need to show that $f_p$ induces a homology isomorphism. Note that we have the following commutative diagram 
\begin{equation}\label{eq:4}
\begin{gathered}
\begin{tikzpicture}[node distance=4cm, auto]
  \node (A) {${CF_1(D^n, 2^{-k}\epsilon)}_{p}$};
  \node (B) [right of=A] {$\widetilde{\Sigma}_p^nF(D^n,\partial)$};
  \node (C) [below of=A, node distance=1.7cm] {$\Delta_{p}(D^n,k)$};  
  \node (D) [below of=B, node distance=1.7cm] {$\Delta_{p}(D^n, k),$};
  \draw[->] (C) to node {$=$} (D);
  \draw [->] (A) to node {$\tau$}(C);
  \draw [->] (A) to node {$f_p$} (B);
  \draw [->] (B) to node {$\pi$} (D);
\end{tikzpicture}
\end{gathered}
\end{equation}
where $\pi$ and $\tau$ are natural projection to the simplicial set $\Delta_{\bullet}(D^n,k)$. Hence to show that $f_p$ induces a homology isomorphism, it is enough to prove that that $f_p$ induces a homology isomorphism on the fibers of $\tau$ and $\pi$. 

We have three cases: \begin{itemize}
\item If $\beta=(B_0,\dots, B_p)$ lies in $\Delta_p(D^n,k)$ but not in $D^n_p(k)$, then the fiber of $\tau$ consists of those elements in $F(D^n)$ that have empty support which is a contractible space by \Cref{condi}. The fiber of $\pi$ over $\beta$ is a point. 
\item  If $\beta=(B_0,\dots, B_p)$ lies in $D_p^n(k)$ but not in $\partial D^n_p(k)$, then the fiber of $\tau$ over $\beta$ is the subspace $F_c(\cap_jB_j)$. The fiber of $\pi$ over $\beta$ is $F(D^n,\partial)$. Note that by the second condition in \Cref{condi}, the inclusion $F_c(\cap_jB_j)\hookrightarrow F(D^n,\partial)$ is a homology isomorphism. 
\item   If $\beta=(B_0,\dots, B_p)$ lies in $\partial D_p^n(k)$, then the fiber of $\tau$ over $\beta$ is acyclic by the third condition of \Cref{condi} and the fiber of $\pi$ over $\beta$ is a point. Therefore, $f_p$ induces a homology isomorphism which in turn implies that $f$ induces a homology isomorphism. 
\end{itemize}

Since ${CF_k(M,\epsilon)}_{\bullet}={CF_1(M,2^{-k}\epsilon)}^k$. Therefore, the fact that $f$ induces a homology isomorphism implies that the map 
\[
||{CF_k(D^n)}_{\bullet}||\to (\widetilde{\Sigma}^nF(D^n,\partial))^k.
\]
is also a homology isomorphism. Similar to the diagram \ref{eq:4}, one can fiber the map 
\[
D{F^f_k(D^n)}_{\bullet}\to (\pi^k)^{-1}(T_{k,\bullet}(D^n)),
\]
over $T_{k,\bullet}(D^n)$, to prove that 
\[
||D{F^f_k(D^n)}_{\bullet}||\to (\pi^k)^{-1}(||T_{k,\bullet}(D^n)||).
\]
is also a homology isomorphism. 
\end{proof}
\begin{rem}\label{disks}Let $U$ be an open subset of $M$ that is homeomorphic to the disjoint union of Euclidean spaces of dimension $n$. The same proof as the case of $D^n$ implies that
\[
F_c(U)\to F^f_c(U)\simeq \prod_{\pi_0(U)}\Omega^nF(D^n,\partial),
\]
is a homology isomorphism. 
\end{rem}
\subsubsection{Proof of \Cref{h}}\label{global} Since both $F$ and $F^f$ satisfy fragmentation property, the spaces $F_c(M)\simeq F_{\epsilon}(M)$ and $F_c^f(M)\simeq F^f_{\epsilon}(M)$ can be filtered and the natural map $F_{\epsilon}(M)\to F_{\epsilon}^f(M)$ respects the filtration. Hence,  it is enough to show that the induced map between filtration quotients induces a homology isomorphism. Using \Cref{filtrationquotients}, it is enough to prove that the induced map between pairs
\[
(F_k(M), DF_k(M))\to (F^f_k(M), DF^f_k(M)),
\]
induces a homology isomorphism.  Let us first show that $F_k(M)\to F^f_k(M)$ induces a homology isomorphism using the same idea as in the proof of \Cref{sss}. We use \Cref{resolution} to resolve $F_k(M)$ and  $F^f_k(M)$ by $F_k(M)_{\bullet}$ and $F^f_k(M)_{\bullet}$. Recall that $F_k(M, \epsilon)= (F_1(M, 2^{-k}\epsilon))^k$ and $F^f_k(M, \epsilon)= (F^f_1(M, 2^{-k}\epsilon))^k$. Therefore, it is enough to show that 
\[
F_1(M)_p\to F^f_1(M)_p,
\]
induces a homology isomorphism for each simplicial degree $p$.  To do so, we consider the commutative diagram
\begin{equation}\label{eq:6}
\begin{gathered}
\begin{tikzpicture}[node distance=4cm, auto]
  \node (A) {${F_1(M)}_{p}$};
  \node (B) [right of=A] {$F^f_1(M)_p$};
  \node (C) [below of=A, node distance=1.7cm] {$\Delta_{p}(M)$};  
  \node (D) [below of=B, node distance=1.7cm] {$\Delta_{p}(M),$};
  \draw[->] (C) to node {$=$} (D);
  \draw [->] (A) to node {$\tau$}(C);
  \draw [->] (A) to node {$f_p$} (B);
  \draw [->] (B) to node {$\pi$} (D);
\end{tikzpicture}
\end{gathered}
\end{equation}
where $\pi$ and $\tau$ are natural projection to the simplicial set $\Delta_{\bullet}(M)$. Hence to show that $f_p$ induces a homology isomorphism, it is enough to prove that that $f_p$ induces a homology isomorphism on the fibers of $\tau$ and $\pi$. Let $\sigma_p=(B_0, B_1, \dots, B_p)$ be a $p$-simplex in $\Delta_{p}(M)$. There are two cases for the fibers of $\tau$ and $\pi$ over $\sigma_p$:
\begin{itemize}
\item The intersection of $B_i$'s is empty. Therefore, the preimages of $\sigma_p$ under $\tau$ and $\pi$ are contractible by the first condition in \Cref{condi}.
\item The intersection of $B_i$'s is not empty. Given that the disks $B_i$'s are geodesically convex, their intersection is homeomorphic to a disk. Hence, the induced map on fibers over $\sigma_p$ is
\[
F_c(\cap_i B_i)\to F^f_c(\cap_i B_i),
\]
which is a homology isomorphism as we proved the main theorem for the disks relative to their boundaries in section \ref{diskcase}.
\end{itemize}
Similarly, we could see that $DF_k(M))\to DF^f_k(M)$ induces a homology isomorphism by fibering the semisimplicial resolutions $DF_1(M)_{\bullet}$ and $DF^f_1(M)_{\bullet}$ over the semisimplicial set $T_{1,\bullet}(M)$. Hence, we conclude that the map between pairs
\[
(F_k(M), DF_k(M))\to (F^f_k(M), DF^f_k(M)),
\]
induces a homology isomorphism.\qed

\begin{rem} One could give a different proof of \Cref{h} using \Cref{condi} and goodness of $F$ directly. As in Appendix \ref{sec2}, we could use the notion of lax support to show that 
  \[
 \underset{U\in{\bf D}(M)}{\textsf{hocolim }}F^f_c(U)\xrightarrow{\simeq} F^f_c(M).
 \]
 Since $F$ satisfies \Cref{condi}, the same proof implies that 
 \[
 \underset{U\in{\bf D}(M)}{\textsf{hocolim }}F_c(U)\xrightarrow{\simeq} F_c(M).
 \]

 Using \Cref{disks} for $U\in{\bf D}(M)$, we know that $F_c(U)\to F_c^f(U)$ is a homology isomorphism. Using the spectral sequence to compute the homology of the homotopy colimits and the bar construction model for the homotopy colimits, it is enough to prove that natural maps
 \[
 B_{\bullet}(F_c(-), {\bf D}(M), *)\to  B_{\bullet}(F^f_c(-), {\bf D}(M), *),
 \]
 is a homology isomorphism which easily follows from $F_c(U)\to F^f_c(U)$ being homology isomorphism for all $U\in {\bf D}(M)$. 
 \end{rem}
 \section{Mather-Thurston's theory for new transverse structures}\label{newMT} In this section,  we prove Mather-Thurston's type \Cref{MT} for foliated bundles with new transverse structures. We shall first explain in more detail, what it means for the functors $\textnormal{\text{Fol}}_c(M, \alpha)$ and $\textnormal{\text{Fol}}^{\textnormal{\text{PL}}}_c(M)$ to satisfy the c-principle.We then explain how Thurston avoids the local statement for the foliated bundle by using the method of the proof of \Cref{main}.
 \begin{itemize}
\item {\boldmath $\textnormal{\textbf{Fol}}_c(M, \alpha)$}: Let $(M, \alpha)$ be a contact manifold where $M$ is a manifold of dimension $2n+1$ and $\alpha$ is a smooth $1$-form such that $\alpha \wedge (d\alpha)^n$ is a volume form. The group of $C^{\infty}$-contactomorphisms consists of $C^{\infty}$-diffeomorphisms such that $f^*(\alpha)=\lambda_f \alpha$ where $\lambda_f$ is a non-vanishing smooth function on $M$ depending on $f$. Since we are working with orientation-preserving automorphisms, we assume that $\lambda_f$ is a positive function. Let $\Cont_c(M,\alpha)$ denote the group of compactly supported contactomorphisms with induced topology from $C^{\infty}$-diffeomorphisms. It is known that this group is also locally contractible (\cite{MR2509723}). Let $\dCont_c(M,\alpha)$ denote the same group with the discrete topology. 

The functor $\textnormal{\text{Fol}}_c(M, \alpha)$ is homotopy equivalent to $\overline{\BCont_c(M,\alpha)}$ which is the homotopy fiber of the natural map
\[
\BdCont_c(M,\alpha)\to \BCont_c(M, \alpha). 
\]
The space of formal sections in this case is easier to describe.

 Let $\Gamma_{2n+1,ct}$ be the etale groupoid whose space of objects is $\bR^{2n+1}$ and the space of morphisms is given by the germ of contactomorphisms of $(\bR^{2n+1}, \alpha_{st})$ where $\alpha_{st}$ is the standard contact form $dx_0+\sum_{i=1}^n x_{i+n}dx_i$. Note that the subgroup of $\mathrm{GL}_{2n+1}(\bR)$, formed by orientation preserving linear transformations that preserve $\alpha_{st}$ has $U_n$ as a deformation retract. Hence, the derivative of morphisms in $\Gamma_{2n+1, ct}$ at their sources induces the map
\[
\nu\colon\mathrm{B}\Gamma_{2n+1,ct}\to \mathrm{BU}_n.
\]
Let $\tau_M$ be the map $M\to \mathrm{BU}_n$ that classifies the tangent for the contact manifold $(M,\alpha)$. The space of formal sections, $\textnormal{\text{Fol}}^f_c(M, \alpha)$, is the space of lifts of the map $\tau_M$ to $\mathrm{B}\Gamma_{2n+1,ct}$
 \begin{equation*}\label{eq8}
\begin{gathered}
 \begin{tikzpicture}[node distance=2cm, auto]
  \node (C)  {$M$};  
  \node (D) [right of=C, node distance=2.5cm] {$ \mathrm{BU}_n.$};
    \node (B)  [above of=D, node distance=1.3 cm] {$\mathrm{B}\Gamma_{2n+1,ct}$};
  \draw[->] (C) to node {$\tau_M$} (D);
  \draw [->] (B) to node {$\nu$} (D);
    \draw [->, dashed] (C) to node {$$} (B);
\end{tikzpicture}
\end{gathered}
\end{equation*}
The universal foliated $M$-bundle $\overline{\BCont_c(M,\alpha)}\times M\to \overline{\BCont_c(M,\alpha)}$ with the transverse contact structure (i.e. the holonomy maps respect the contact structure of the fibers) induces a classifying map $\overline{\BCont_c(M,\alpha)}\times M\to \mathrm{B}\Gamma_{2n+1,ct}$. The adjoint of this classifying map induces a map $\textnormal{\text{Fol}}_c(M, \alpha)\to \textnormal{\text{Fol}}^f_c(M, \alpha)$. Rybicki  \cite[Section 11]{MR2729009} mentioned that an analog of the Mather-Thurston theorem is not known for smooth contactomorphisms and he continues saying that ``it seems likely that such a version could be established, but a possible proof seems to be hard". We show that the above adjoint map induces homology isomorphisms in the compactly supported case. The non-compactly supported version remains open. In particular, it is unknown whether the map $\overline{\BCont(\bR^{2n+1},\alpha_{st})}\to \overline{\mathrm{B}\Gamma_{2n+1,ct}}$ induces a homology isomorphism where $\overline{\mathrm{B}\Gamma_{2n+1,ct}}$ is the homotopy fiber of the map $\nu$. The original Thurston's technique is useful to avoid such subtle local statements to get the compactly supported version. 
\item {\boldmath $\textnormal{\textbf{Fol}}^{\textnormal{\textbf{PL}}}_c(M)$}: Let $M$ be an $n$-dimensional manifold that has a PL structure. Let $\text{PL}_{\bullet}(M)$ be the simplicial group of PL homeomorphisms of $M$. The set of $k$-simplices, $\text{PL}_k(M)$, is the group of PL homeomorphisms $M \times \Delta^k$ that commute with projection to $\Delta^k$. The topological group, $\text{PL}(M)$, of PL homeomorphisms of $M$ is the geometric realization of $\text{PL}_{\bullet}(M)$. Hence, the $0$-simplices of $\text{PL}_{\bullet}(M)$ is $\text{PL}(M)^{\delta}$ which is the group of PL homeomorphisms of $M$ as a discrete group. Therefore, we have a map
\[
\mathrm{B}\text{PL}(M)^{\delta}\to \mathrm{B}\text{PL}(M),
\]
whose homotopy fiber is denoted by $\overline{\mathrm{B}\text{PL}(M)}$. This space is homotopy equivalent to $\textnormal{\text{Fol}}^{\textnormal{\text{PL}}}_c(M)$. The space of formal sections is defined similarly to the contact case. Let $\Gamma_n^{\text{PL}}$ denote the etale groupoid whose space of objects is $\bR^n$ and whose space morphisms are given by germs of PL homeomorphisms of $\bR^n$. Note that a germ of PL homeomorphism at its sources in $\bR^n$ uniquely extends to a PL homeomorphism of $\bR^n$. Hence, we obtain a map
\[
\mathrm{B}\Gamma_n^{\text{PL}}\to \mathrm{B}\text{PL}(\bR^n).
\]
Let $\tau_M\colon M\to \mathrm{B}\text{PL}(\bR^n)$ be a map that classifies the tangent microbundle of $M$. The space $\textnormal{\text{Fol}}^{\textnormal{\text{f,PL}}}_c(M)$ is the space of lifts of $\tau_M$ in the diagram
\begin{equation*}\label{eq8}
\begin{gathered}
 \begin{tikzpicture}[node distance=2cm, auto]
  \node (C)  {$M$};  
  \node (D) [right of=C, node distance=2.8cm] {$\mathrm{B}\text{PL}(\bR^n).$};
    \node (B)  [above of=D, node distance=1.3 cm] {$\mathrm{B}\Gamma_n^{\text{PL}}$};
  \draw[->] (C) to node {$\tau_M$} (D);
  \draw [->] (B) to node {$\nu$} (D);
    \draw [->, dashed] (C) to node {$$} (B);
\end{tikzpicture}
\end{gathered}
\end{equation*}
Similar to the previous cases, the universal foliated $M$ bundle with transverse PL structure induces a map $\overline{\mathrm{B}\text{PL}(M)}\times M\to \mathrm{B}\Gamma_n^{\text{PL}}$ whose adjoint gives the map $\textnormal{\text{Fol}}^{\textnormal{\text{PL}}}_c(M)\to \textnormal{\text{Fol}}^{\textnormal{\text{f,PL}}}_c(M)$. We show that this map induces a homology isomorphism which answers a question of Gelfand and Fuks in \cite[Section 5]{MR0339195}. However, the non-compactly supported version even for $M=\bR^n$ is not known. 
 \end{itemize}
\subsection{Strategy to avoid the local data} Recall that the strategy is first to prove $F(D^n, \partial)\to \Omega^n F^f(D^n)$ induces a homology isomorphism and then for a compact manifold $M$ the proof is exactly the same as \Cref{global}.  For smooth foliations without extra transverse structures, Thurston's idea to avoid the local statement as is explained in Mather's note (\cite{mather2011homology}) is to consider {\it disk model} for $\overline{\mathrm{B}\Gamma}_n$ (see \cite[Section 9]{mather2011homology}). More concretely, he proved that $\overline{\BDiff_c(\bR^n)}\to \Omega^n\overline{\mathrm{B}\Gamma}_n$ induces a homology isomorphism without using $\overline{\BDiff(\bR^n)}\to \overline{\mathrm{B}\Gamma}_n$ being a homology isomorphism. 

To recall the disk model for  $\overline{\mathrm{B}\Gamma}_n$, we define $F(D^n)$ to be the realization of the semi-simplicial set whose $q$-simplices are given by germs of foliations on the total space of $\Delta^q\times \bR^n\to \Delta^q$ around $\Delta^q\times D^n$ that are transverse to the fibers. It is easy to show that $F(D^n)$ is homotopy equivalent to $\overline{\mathrm{B}\Gamma}_n$. The advantage of the disk model is one can define the support for the germ of the foliation by taking the intersection of the support of a representative with the disk $D^n$ and it has Thurston's fragmentation property. But note that if a germ of a foliation is supported in an open set $U$ in $\text{int}(D^n)$, it would give a simplex in $\textnormal{\text{Fol}}_c(U)\simeq \overline{\BDiff_c(U)}$. In particular, we have $F(D^n, \partial)\simeq \overline{\BDiff_c(\bR^n)}$.

Similarly, we define $F^f(D^n)$ to be the space of maps $\text{Map}(D^n,\overline{\mathrm{B}\Gamma}_n)$. Since $\overline{\mathrm{B}\Gamma}_n$ is at least $(n-1)$-connected, it has the fragmentation property and given that $D^n$ is contractible, we have $F(D^n)\simeq F^f(D^n)$. Also we have $F^f(D^n, \partial)\simeq \Omega^n \overline{\mathrm{B}\Gamma}_n$. Therefore, we have a diagram \ref{R} for these choices and \Cref{j_1} applies to prove that $\overline{\BDiff_c(\bR^n)}\to \Omega^n\overline{\mathrm{B}\Gamma}_n$ induces a homology isomorphism.

One can use the corresponding disk model for each case in \Cref{MT}, and follow the same strategy as \Cref{main}. Hence, in each case, to show that the corresponding $F$ satisfies the c-principle,  we need to show that the fragmentation properties and the goodness conditions (\Cref{condi}) for $F$ are satisfied. 
 
 It is easy to see that these functors satisfy the first and the fourth conditions in \Cref{condi}. Since the subspace of foliations with empty support is a point, hence contractible. And the third and fourth conditions are obvious in this case. The second condition is also satisfied for these spaces of foliations because there exists a metric on the space of foliations that makes them complete metric spaces (see \cite[Section 2]{hirsch1973stability}). \footnote{Epstein (\cite[Section 6]{epstein1977topology} showed that in the case of smooth foliations, the topology induced by such metric is the same as the subspace topology of space of plane fields.} Hence, it is easy to see that the base point in these spaces which is the horizontal foliation makes them well-pointed, in particular, it is a strong neighborhood deformation retract. Therefore, similar to \Cref{lax}, all these functors satisfy \Cref{condi} meaning that enlarging the subspace of compactly supported foliations to lax compactly supported (which is an open subspace) does not change the homotopy type. Hence, to prove the goodness of these functors we need to check the last two conditions in \Cref{condi}.

 The case of the contactomorphisms and PL homeomorphisms are similar and given what we already know about the connectivity of the corresponding Haefliger spaces, as we shall see, we have all the ingredients to check the above conditions. Hence, we prove the c-principle for $\text{Fol}_c(M,\alpha)$ and $\text{Fol}_c^{\text{PL}}(M)$ first. 
 \subsection{The case of the contactomorphisms and PL homeomorphisms} Haefliger's argument in \cite[Section 6]{haefliger1971homotopy} implies that $\overline{\mathrm{B}\Gamma_n^{\text{PL}}}$ is $(n-1)$-connected. Haefliger showed (\cite[Theorem 3]{haefliger1970feuilletages}) that Phillips' submersion theorem in the smooth category implies that $\overline{\mathrm{B}\Gamma_n}$ is $n$-connected. Given that Phillips' submersion theorem also holds in the PL category (\cite{haefliger1964classification}), one could argue exactly similar to the smooth case that $\overline{\mathrm{B}\Gamma_n^{\text{PL}}}$ is, in fact, $n$-connected. On the other hand, McDuff in \cite[Proposition 7.4]{mcduff1987applications} also proved that $\overline{\mathrm{B}\Gamma_{2n+1,ct}}$ is $(2n+1)$-connected which is even one degree higher than what we need. Hence, $\text{Fol}^{f,\text{PL}}(D^n)$ is $n$-connected and $ \text{Fol}^f(D^{2n+1},\alpha)$ is $(2n+1)$-connected. Therefore, the space of formal sections satisfies the fragmentation property. To prove the fragmentation property for $\text{Fol}^{\text{PL}}(D^n)$ and $ \text{Fol}(D^{2n+1},\alpha)$, we shall use the following lemma.
 
 \begin{lem}[McDuff ]\label{mcduff} Let $G(M)$ be a topological group of compactly supported automorphisms of $M$ with a transverse geometric structure (e.g. PL homeomorphisms, contactomorphisms, volume-preserving diffeomorphisms, and foliation preserving diffeomorphisms). We assume that 
 \begin{itemize}
\item $G(M)$ with its given topology is locally contractible. 
\item For every isotopy $h_t$ as a path in $G(M)$ and every open cover $\{B_i\}_{i=1}^k$ of $M$, we can write $h_t=h_{t,1}\circ\cdots h_{t,k}$ where each $h_{t,i}$ is an isotopy supported in $B_i$. 
 \end{itemize}
Let $F_c(M)$ be the realization of the semisimplicial set whose $p$ simplices are the set of foliations on $\Delta^p\times M$ transverse to fibers of the projection $\Delta^p\times M\to \Delta^p$ and whose holonomies lie in $G(M)$. Then the functor $F_c(M)$ which is homotopy equivalent to $\overline{\mathrm{B}G(M)}$ has the fragmentation property in the sense of \Cref{prop}.
 \end{lem}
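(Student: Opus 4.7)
The strategy is to adapt McDuff's original argument for smooth diffeomorphisms, using the two stated hypotheses as the only inputs. We must show that $F_\epsilon(M)\hookrightarrow F_c(M)$ is a weak equivalence, i.e.\ given a compatible pair $f\colon S^{q-1}\to F_\epsilon(M)$ and $\Phi\colon D^q\to F_c(M)$, we can homotope $\Phi$ rel boundary into $F_\epsilon(M)$. View $F_c(M)\simeq \overline{\mathrm{B}G(M)}$ as the homotopy fiber of $\mathrm{B}G(M)^\delta\to \mathrm{B}G(M)$. A $p$-simplex of $F_c(M)$ is a foliation on $\Delta^p\times M$ transverse to the fibers, equivalently (after fixing a horizontal trivialization at the base vertex) a continuous assignment to each point of $\Delta^p$ of a holonomy element of $G(M)$ that equals the identity at the base vertex.

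First I would subdivide $D^q$ together with its $\Delta^p$-simplices finely enough. By the first hypothesis (local contractibility of $G(M)$), the restriction of $\Phi$ to each small cell of the subdivision may be represented by a continuously parametrized isotopy $\{h_t^{(s)}\}$ in $G(M)$ starting at the identity, where $s$ ranges over the cell and $t$ over the relevant edge direction of $\Delta^p$. The compactness of $\Phi(D^q)$ means that the supports stay in a fixed compact subset of $M$. Choose a cover of this compact set by geodesically convex balls $B_1,\dots,B_N$ of radius $2^{-N}\epsilon$.

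Next, I apply the second hypothesis to every parametrized isotopy $h_t^{(s)}$ to obtain a factorization $h_t^{(s)}=h_{t,1}^{(s)}\circ\cdots\circ h_{t,N}^{(s)}$ where $h_{t,i}^{(s)}$ is supported in $B_i$. Each factor then defines a simplex of $F_c(B_i)\subset F_\epsilon(M)$, and the total ordered composition is homotopic in $F_c(M)$ to the original simplex by sliding a path from the identity permutation of factors to the trivial factorization; this is a standard move in $\overline{\mathrm{B}G(M)}$ using that $\mathrm{B}G(M)$ is a topological monoid under juxtaposition of bundles. Gluing these cellwise homotopies over the subdivision of $D^q$ yields the desired homotopy of $\Phi$ into $F_\epsilon(M)$, and one can arrange (by choosing the subdivision compatible with $S^{q-1}$ and doing nothing to cells already satisfying the $\epsilon$-bound) that the homotopy is rel boundary.

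The main obstacle is the coherence of the fragmentation across the subdivision of $D^q$: the factorization provided by the second hypothesis is not canonical, so one must interpolate between different factorizations on overlapping cells. This is where local contractibility of $G(M)$ is used crucially — it allows the chosen factorizations, made locally in $s$, to be glued up to homotopy through contractible neighborhoods in $G(M)^N$. Inductively over the skeleta of the subdivision, one extends the fragmentation from lower-dimensional cells using contractibility of the space of possible factorizations that agree on a given face, completing the deformation. The argument produces a family whose support per simplex is covered by $N$ balls of radius $2^{-N}\epsilon$, placing it in $F_\epsilon(M)$ as required.
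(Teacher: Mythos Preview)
The paper does not give a proof here; it simply cites McDuff \cite{MR699012}, section 4 and subsection 4.15. Your outline is in the spirit of McDuff's argument and identifies the right ingredients (fine subdivision, local contractibility to pass to isotopies, fragmenting those isotopies with respect to a cover by small balls, then gluing), so in that sense you are following the same route the paper points to.

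However, there is a real gap in your coherence step. The second hypothesis fragments a \emph{single} isotopy $h_t$; it says nothing about continuity in an auxiliary parameter $s$. Applying it pointwise in $s$ gives factorizations $h_{t,i}^{(s)}$ that a priori do not depend continuously on $s$, so your ``parametrized factorization'' is not yet available. Your proposed fix---interpolating between factorizations on adjacent cells via local contractibility of $G(M)$---assumes the space of factorizations agreeing on a face is contractible (or has enough connectivity to extend over higher skeleta). That does not follow from local contractibility of $G(M)$ alone: local contractibility controls small neighborhoods of individual elements, not spaces of $N$-tuples with prescribed product and support constraints. In McDuff's actual argument this is handled by a more delicate induction in which one arranges the fragmentation of a small $(s,t)$-family to vary continuously from the outset (using both hypotheses simultaneously on sufficiently small pieces), rather than fragmenting pointwise and repairing afterward. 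A minor additional issue: the sentence invoking ``$\mathrm{B}G(M)$ is a topological monoid under juxtaposition of bundles'' is not correct as stated---$\mathrm{B}G(M)$ carries no such structure in general---and the homotopy from the original simplex to the composed fragments should instead be exhibited directly as a concordance of transverse foliations on $\Delta^p\times[0,1]\times M$.
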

 \begin{proof}
 See section 4 of \cite{MR699012} and the discussion in subsection 4.15.
 \end{proof}
 The PL homeomorphism groups are known to be locally contractible (\cite{gauld1976local}) and as is proved by Hudson (\cite[Theorem 6.2]{hudson1969piecewise}) they also satisfy an appropriate isotopy extension theorem which gives the second condition in \Cref{mcduff}. Therefore, $\text{Fol}_c^{\text{PL}}(M)$ satisfies the fragmentation property. On the other hand, the group of contactomorphisms is also locally contractible (\cite[Section 3]{MR2509723}) and it satisfies the second condition (\cite[Lemma 5.2]{MR2729009}). Hence, $\text{Fol}_c(M,\alpha)$ also satisfies the fragmentation property in the sense of \Cref{prop}.
 
 Now we are left to show that $\textnormal{\text{Fol}}_c(-,\alpha)$ and $\textnormal{\text{Fol}}_c^{\text{PL}}(-)$ are good functors in the sense of \Cref{condi}. 
 \begin{lem}\label{goodness}
  $\textnormal{\text{Fol}}_c(-,\alpha)$ and $\textnormal{\text{Fol}}_c^{\text{PL}}(-)$ are good functors.
 \end{lem}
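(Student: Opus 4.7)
The preceding discussion in the paper already verifies conditions (1) through (4) of \Cref{condi}: the empty-support stratum is a point in both cases; well-pointedness with a lax-to-strict equivalence uses that the horizontal foliation is a strong neighborhood deformation retract with respect to a complete metric on the space of foliations; the open-embedding and disjoint-multiplication conditions are immediate from the semisimplicial set definitions of $\textnormal{\text{Fol}}_c$ and $\textnormal{\text{Fol}}_c^{\text{PL}}$. So the plan is to verify the two remaining conditions, namely (5) (homology invariance of $F_c$ under embeddings of open disks) and (6) (contractibility of $F(D^n,\partial_1)$).

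For condition (6), the plan is to construct an explicit deformation retraction of $F(D^n,\partial_1)$ onto the base point by shrinking toward the south pole. Fix a smooth radial contraction $\phi_t$ of $D^n$, supported away from a neighborhood of $\partial_1$, with $\phi_0=\mathrm{id}$ and $\phi_1$ sending $D^n$ into a tiny ball at the south pole. In the PL case, take $\phi_t$ to be piecewise-linear (e.g.\ linear interpolation toward a vertex), and in the contact case choose $\phi_t$ to be the time-$t$ flow of a contact vector field (built from the Liouville-type vector field in a Darboux chart, cut off away from $\partial_1$); this is possible because the pseudogroups in question are closed under compactly supported contractions of Euclidean space to a subball. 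A simplex-wise application of $\phi_t$ to the foliations yields a deformation retraction of $F(D^n,\partial_1)$ onto the subspace of foliations with support inside an arbitrarily small ball around the south pole, which by the well-pointedness argument of the preceding paragraph is contractible to the base foliation.

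For condition (5), let $e\colon U\hookrightarrow V$ be an embedding of open disks. The plan is to upgrade $e$ to an ambient isotopy inside a compact disk $D$ containing $\overline{e(U)}$. By the PL isotopy extension theorem (Hudson \cite{hudson1969piecewise}) in the PL case, and by the contact isotopy extension theorem (cf.\ \cite[Lemma 5.2]{MR2729009}) in the contact case, any isotopy of open disks through embeddings extends to a compactly supported ambient automorphism of the appropriate type. Consequently the map $F_c(U)\to F_c(V)$ factors, up to the natural action of a connected automorphism group on $F_c(V)$, through the inclusion of a standard sub-disk, which by condition (4) (together with the identification $F_c(U)\simeq \overline{\mathrm{B}G(U)}$ and the fact that isotopic embeddings induce equal maps on $\mathrm{B}G^\delta$) yields the same map on homology as a standard inclusion. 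Iterating between any two embeddings and applying condition (4) for a disjoint decomposition then shows $F_c(U)\to F_c(V)$ is a homology isomorphism.

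The main obstacle will be condition (5), because extending the argument across disks of different sizes requires that the relevant pseudogroup act nicely by conjugation on the classifying space. In the PL case this is clean because $\mathrm{BPL}(M)$ admits enough isotopies to make every two embeddings of an open disk ambiently isotopic through PL homeomorphisms. In the contact case the added constraint that the isotopy preserve the contact form makes the argument more delicate, and one must reduce to a Darboux chart before applying isotopy extension, which is precisely the step where one relies on contact isotopy extension from \cite{MR2509723, MR2729009}. Once this step is granted, the verification is entirely analogous to the standard smooth case handled by Mather.
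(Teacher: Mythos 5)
Your verification of condition (6) has a genuine gap, and the direction of contraction is the opposite of what works. You propose shrinking $D^n$ toward the south pole so that foliations become supported in a tiny ball $B_\epsilon$, and then appeal to well-pointedness to declare the resulting subspace contractible. This fails on two counts. First, the $\phi_t$ you describe cannot exist: a homotopy of $D^n$ that is supported away from a neighborhood of $\partial_1$ fixes a neighborhood of $\partial_1$ pointwise, hence cannot send $D^n$ into a tiny ball at the antipodal south pole. Second, and more seriously, the subspace of foliations supported in a small ball is \emph{not} contractible --- up to rescaling it is $F_c(B_\epsilon)\simeq \overline{\BCont_c(\bR^{2n+1})}$ (respectively $\overline{\BPL_c(\bR^n)}$), whose homology is precisely the nontrivial object the whole theory is computing. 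Well-pointedness only gives contractibility of the empty-support stratum, not of foliations with small but nonempty support. The paper's argument goes the other way: choose a neighborhood $U$ of $\partial_1$ disjoint from $\mathrm{supp}(c)$, take a contact (resp.\ PL) contraction $h_t$ of $D^n$ \emph{into $U$} with $h_0=\mathrm{id}$, and pull $c$ back along $F_t(s,x)=(s,h_t(x))$. Because the holonomy of $c$ is the identity on $U$, the pullback $F_1^*(c)$ is the horizontal foliation, yielding a concordance from $c$ to the trivial element. The boundary condition at $\partial_1$ is exactly what makes this work, and that is what your construction never exploits.

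For condition (5), your instinct (isotopy extension plus conjugation-invariance of homology of $\overline{\mathrm{B}G}$) is in the right spirit, but the argument is vaguer than what is needed and contains an imprecision: isotopic automorphisms do not induce ``equal maps on $\mathrm{B}G^\delta$'', they only induce equal maps on the homology of $\overline{\mathrm{B}G}$ --- that is the content of the conjugation lemma (cf.\ \cite[Lemma 1.4.8]{haller1998perfectness}), and it is the lemma the paper invokes. The paper's proof also constructs explicitly the compactly supported contracting contactomorphism: it starts from the anisotropic scaling $\rho_t(x_0,x_1,\dots,x_{2n+1})=(t^2x_0,tx_1,\dots,tx_{2n+1})$, which is a contact contraction but not compactly supported, and then cuts it off by projecting the truncated generating vector field onto contact vector fields via the standard retraction coming from the isomorphism between contact vector fields and functions. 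Appealing abstractly to contact isotopy extension does not by itself produce such a compactly supported contraction into $B(r)$, so this step needs to be made concrete as in the paper.
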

 \begin{proof}
 Recall that we need to check the last two conditions in \Cref{condi}. We focus on $\text{Fol}_c(-,\alpha)$ and we mention where $\text{Fol}_c^{\text{PL}}(-)$ is different.
 
 We may assume that  $U$ and $V$ are balls $B(r)$ and $B(R)$ of radi $r<R$ in $\bR^{2n+1}$. And we want to show that the induced map
 \[
\iota\colon\overline{\BCont_c(B(r),\alpha_{st})}\to \overline{\BCont_c(B(R),\alpha_{st})},
 \]
 is a homology isomorphism. 
 
 Note that for any topological group $G$, the homology of $\overline{\mathrm{B}G}$ can be computed by subchain complex $\text{S}_{\bullet}(\overline{\mathrm{B}G})$ of singular chains $\text{Sing}_{\bullet}(G)$ of the group $G$ given by smooth chains $\Delta^{\bullet}\to G$ that sends the first vertex to the identity (see section 1.4 of \cite{haller1998perfectness} for more detail). Given a chain $c$ in $ \text{S}_{\bullet}(\overline{\BCont_c(B(R),\alpha_{st})})$, to find a chain homotopy to a chain in $\overline{\BCont_c(B(r),\alpha_{st})}$, we need an easy lemma (\cite[Lemma 1.4.8]{haller1998perfectness}) which says that for every contactomorphism $h\in \Cont_{c,0}(B(R),\alpha_{st})$ that is isotopic to the identity, the conjugation by $h$ induces a self-map of $ \overline{\BCont_c(B(R),\alpha_{st})}$ which is the identity on homology. Hence, it is enough to show that there exists $h$ a contactomorphism, isotopic to the identity, that shrinks the support of the given chain to lie inside $B(r)$.  
 
 To find such compactly supported contraction, consider the following family of contactomorphisms $\rho_t: \bR^{2n+1}\to \bR^{2n+1} $
 \[
 \rho(x_0,x_1,\cdots, x_{2n+1})=(t^2.x_0,t.x_1,\cdots, t.x_{2n+1}).
 \]
 For $t<1$, it is a contracting contactomorphism but it is not compactly supported. To cut it off, we use the fact that the family $\rho_t$ is generated by a vector field $\dot{\rho}_t$. 
 
 Let $\lambda$ be a bump function that is positive on the support of the chain $c$ and zero near the boundary of $B(R)$. One wants to consider the flow of the vector field $\dot{\rho}_{\lambda.t}$ to cut off $\rho_t$ but $\dot{\rho}_{\lambda.t}$ may not be a contact vector field. However, there is a retraction $\pi$ from the Lie algebra of smooth vector fields to contact vector fields on every contact manifold (\cite[Section 1.4]{MR1445290}).  Briefly, the reason that this retraction exists is that there is an isomorphism (see \cite[Proposition 1.3.11]{MR1445290}) between contact vector fields on a contact manifold $(M, \alpha)$ and smooth functions by sending a contact vector field $\xi$ to $\iota_{\xi}(\alpha)$, the contraction of $\alpha$ by $\xi$. The retraction $\pi$ is defined by sending a smooth vector field to the contact vector field associated with the function $\iota_{\xi}(\alpha)$. Therefore, the flow of $\pi(\dot{\rho}_{\lambda.t})$ gives a family of compactly supported contactomorphisms of $B(R)$ that shrinks the support of the chain $c$. 
 
 Hence, by conjugating by such contactomorphisms that are isotopic to the identity, we conclude that $\iota$ induces a homology isomorphism. The case of the PL foliations is much easier because the existence of such contracting PL homeomorphisms that are isotopic to the identity is obvious. 
 
To check the last condition, we want to show that any chain $c$ in $S_{\bullet}(\overline{\BCont(D^{2n+1}, \partial_1)})$ is chain homotopic to the identity. Recall that the chain complex $S_{\bullet}(\overline{\BCont(D^{2n+1}, \partial_1)})$ is generated by the set of smooth maps from $\Delta^{\bullet}$ to $\Cont(D^{2n+1}, \partial_1)$ that send the first vertex to the identity contactomorphism. Note that this set is in bijection with the set of foliations on the total space of the projection $\Delta^{\bullet}\times M\to \Delta^{\bullet}$ that are transverse to the fibers $M$ and whose holonomies lie in $\Cont(D^{2n+1}, \partial_1)$. Thus, it is enough to show that each of these generators is chain homotopically trivial. Geometrically, this means that for each such foliation $c$ on $\Delta^{\bullet}\times M$, there is a foliation on $\Delta^{\bullet}\times [0,1]\times M$ transverse to the projection to the first two factors (i.e. it is a concordance) such that on $\Delta^{\bullet}\times \{0\}\times M$, it is given by $c$ and on $\Delta^{\bullet}\times \{1\}\times M$, it is the horizontal foliation. The idea is to ``push" the support of the foliation $c$ towards the free boundary until the foliation becomes completely horizontal.

To do so, consider a small neighborhood $U$ of $\partial_1$ in $D^{2n+1}$ that is in the complement of the support of the foliation $c$. Note that as in the previous case, there is a contact contraction that maps $D^{2n+1}$ to $U$ and is isotopic to the identity. Let us denote this contact isotopy by $h_t$ such that $h_0=\text{id}$. And let $F\colon \Delta^{\bullet}\times [0,1]\times D^{2n+1}\to \Delta^{\bullet}\times D^{2n+1}$ be the map that sends $(s,t,x)$ to $(s, h_t(x))$ and $F_t$ be the map $F$ at time $t$. Since $h_t$ is a contact isotopy, for each $t$, the pullback foliation $F_t^*(c)$ on $ \Delta^{\bullet}\times D^{2n+1}$ also gives an element in $S_{\bullet}(\overline{\BCont(D^{2n+1}\partial_1)})$. Therefore, the pullback foliation $F^*(c)$ on  $\Delta^{\bullet}\times [0,1]\times D^{2n+1}$ is a concordance from $c$ to the horizontal foliation which means that $c$ is chain homotopic to the identity in the chain complex $S_{\bullet}(\overline{\BCont(D^{2n+1}, \partial_1)})$. 

Note that we only used that for each foliation $c$ on $\Delta^{\bullet}\times D^{2n+1}$, there exists a neighborhood $U$ away from the support of $c$ and there is a contact embedding $h$ that maps $D^{2n+1}$ into $U$ which is also isotopic to the identity. Such embeddings isotopic to the identity also exist in the PL case. Therefore, $\text{Fol}_c(-,\alpha)$ and $\text{Fol}_c^{\text{PL}}(-)$ both satisfy the  conditions of \Cref{condi}.  
 \end{proof}
 As an application of this theorem, we could improve the connectivity of $\overline{\mathrm{B}\Gamma_{2n+1,ct}}$.
 \begin{cor}\label{improve}
 The classifying space $\overline{\mathrm{B}\Gamma_{2n+1,ct}}$ is at least $(2n+2)$-connected.
 \end{cor}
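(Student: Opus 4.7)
The plan is to deduce $(2n+2)$-connectivity of $\overline{\mathrm{B}\Gamma_{2n+1,ct}}$ from \Cref{MT} combined with Rybicki's perfectness theorem \cite{MR2729009}, following the blueprint Thurston used to improve Haefliger's connectivity bound for $\overline{\mathrm{B}\Gamma_n}$ by one degree in the smooth case.

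Apply \Cref{MT} to $M = \bR^{2n+1}$ with the standard contact form $\alpha_{st}$. Since $\bR^{2n+1}$ is parallelizable, the classifying map $\tau_M$ is null-homotopic, so $\text{Fol}^f_c(\bR^{2n+1},\alpha_{st}) \simeq \Omega^{2n+1}\overline{\mathrm{B}\Gamma_{2n+1,ct}}$, and the c-principle produces a homology isomorphism
\[
\overline{\BCont_c(\bR^{2n+1},\alpha_{st})} \longrightarrow \Omega^{2n+1}\overline{\mathrm{B}\Gamma_{2n+1,ct}}.
\]
Since $\overline{\mathrm{B}\Gamma_{2n+1,ct}}$ is already $(2n+1)$-connected by \cite[Proposition 7.4]{mcduff1987applications}, it is in particular simply connected, so Hurewicz turns the desired claim $\pi_{2n+2}=0$ into the equivalent homological statement $H_{2n+2}(\overline{\mathrm{B}\Gamma_{2n+1,ct}};\bZ) = 0$. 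Because of the high connectivity, iterated homology suspension stays in its stable range at each stage, yielding
\[
H_{2n+2}(\overline{\mathrm{B}\Gamma_{2n+1,ct}}) \;\cong\; H_1\bigl(\Omega^{2n+1}\overline{\mathrm{B}\Gamma_{2n+1,ct}}\bigr) \;\cong\; H_1\bigl(\overline{\BCont_c(\bR^{2n+1},\alpha_{st})}\bigr),
\]
the second isomorphism coming from the c-principle above. It therefore suffices to show that $H_1$ of the homotopy fiber $\overline{\BCont_c(\bR^{2n+1},\alpha_{st})}$ vanishes.

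For that last reduction I invoke Rybicki's theorem: the identity component $\Cont_c(\bR^{2n+1},\alpha_{st})_0$ is perfect as a discrete group, so $H_1(\BdCont_c(\bR^{2n+1},\alpha_{st})_0;\bZ)=0$. Restricting to the identity component and applying the Serre spectral sequence to the homotopy fibration
\[
\overline{\BCont_c(\bR^{2n+1},\alpha_{st})}_0 \longrightarrow \BdCont_c(\bR^{2n+1},\alpha_{st})_0 \longrightarrow \BCont_c(\bR^{2n+1},\alpha_{st})_0,
\]
the resulting five-term exact sequence, combined with the perfectness vanishing and the simple connectedness of the topologized base (since $\Cont_c(\bR^{2n+1},\alpha_{st})_0$ is connected), realizes $H_1$ of the fiber as the cokernel of the transgression $H_2(\BdCont_c(\bR^{2n+1},\alpha_{st})_0) \to H_2(\BCont_c(\bR^{2n+1},\alpha_{st})_0)$.

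The hardest step is closing the argument by showing this transgression is surjective --- equivalently, by controlling $\pi_1\Cont_c(\bR^{2n+1},\alpha_{st})_0$ via the $h$-principle for formal contact vector fields. This is where genuine geometric input beyond the c-principle is required, exactly parallel to the step in Thurston's smooth argument that uses the low-dimensional homotopy of the topologized diffeomorphism group; once it is in hand, the rest of the outline --- suspension isomorphism, c-principle, Hurewicz --- is essentially formal.
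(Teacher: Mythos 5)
Your outline matches the paper's strategy --- apply \Cref{MT} to $\bR^{2n+1}$, reduce the claim to $H_1(\overline{\BCont_c(\bR^{2n+1},\alpha_{st})})=0$, and feed in Rybicki's perfectness result --- but you stop short of a complete proof because you quote Rybicki for a weaker statement than the one actually available. Taking only ``$\Cont_c(\bR^{2n+1},\alpha_{st})_0$ is perfect as a discrete group,'' you correctly observe that the five-term exact sequence of
\[
\overline{\BCont_c(\bR^{2n+1},\alpha_{st})}\lra \BdCont_c(\bR^{2n+1},\alpha_{st})_0\lra \BCont_c(\bR^{2n+1},\alpha_{st})_0
\]
identifies $H_1$ of the fiber with the cokernel of a transgression landing in $\pi_1(\Cont_c(\bR^{2n+1},\alpha_{st})_0)$, whose surjectivity you then flag as an unresolved ``hardest step.'' This is not an extra geometric ingredient to be supplied by a contact h-principle for $\pi_1\Cont_{c,0}$: the paper cites \cite{MR2729009} for the perfectness of $\pi_1(\overline{\BCont_c(\bR^{2n+1},\alpha_{st})})$ itself. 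The long exact homotopy sequence of the very same fibration gives
\[
0\lra \pi_1(\Cont_c(\bR^{2n+1},\alpha_{st})_0)\lra \pi_1(\overline{\BCont_c(\bR^{2n+1},\alpha_{st})})\lra \Cont_c(\bR^{2n+1},\alpha_{st})_0^\delta\lra 0,
\]
so this fundamental group is the universal covering group of $\Cont_{c,0}$ made discrete; perfectness of that universal cover is the standard form of such theorems in the Mather--Thurston tradition (as it is for Mather's and Thurston's own perfectness results for $\Diff_c$), and it gives $H_1(\overline{\BCont_c})=0$ with no spectral-sequence step. Once you cite this form of Rybicki's theorem, your argument closes. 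As a minor stylistic point, the paper also avoids the iterated homology suspension altogether: $\Omega^{2n+1}\overline{\mathrm{B}\Gamma_{2n+1,ct}}$ is a loop space, so $\pi_1$ is abelian and $H_1=\pi_1=\pi_{2n+2}(\overline{\mathrm{B}\Gamma_{2n+1,ct}})$ directly; your suspension route works (each stage is in range), but this is shorter.
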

 \begin{proof} We already know by McDuff's theorem(\cite[Proposition 7.4]{mcduff1987applications}) that $\overline{\mathrm{B}\Gamma_{2n+1,ct}}$ is $(2n+1)$-connected. To improve the connectivity by one, note that Rybicki proved (\cite{MR2729009}) that $\overline{\BCont_c(\bR^{2n+1})}$ has a perfect fundamental group. Therefore, its first homology vanishes. On the other hand, by \Cref{MT}, the space $\overline{\BCont_c(\bR^{2n+1})}$ is homology isomorphic to $\Omega^{2n+1}\overline{\mathrm{B}\Gamma_{2n+1,ct}}$. Hence, we have 
 \[
 0=H_1(\Omega^{2n+1}\overline{\mathrm{B}\Gamma_{2n+1,ct}};\bZ)=\pi_1(\Omega^{2n+1}\overline{\mathrm{B}\Gamma_{2n+1,ct}})=\pi_{2n+2}(\overline{\mathrm{B}\Gamma_{2n+1,ct}}),
 \]
 which shows that $\overline{\mathrm{B}\Gamma_{2n+1,ct}}$ is $(2n+2)$-connected.
 \end{proof}
 However, as we mentioned in the introduction, the perfectness of the identity component of PL homeomorphisms $\text{PL}_0(M)^{\delta}$ of a PL manifold $M$ is not known in general. Epstein (\cite{epstein1970simplicity}) proved that $\text{PL}_c(\bR)^{\delta}$ and $\text{PL}_0(S^1)^{\delta}$ are perfect and left the case of higher dimensions as a question. In \cite[Theorem 1.4]{nariman2022flat}, the author used the c-principle of $\text{Fol}_c^{\text{PL}}(-)$ to prove the following.
 \begin{thm}Let $\Sigma$ be an oriented surface so it has essentially a unique PL structure and let $\textnormal{\text{PL}}_0(\Sigma,\text{rel }\partial)$ denote the identity component of the group of PL homeomorphisms of $\Sigma$ whose supports are away from the boundary. Then the discrete group ${\textnormal{\text{PL}}}^{\delta}_0(\Sigma,\text{rel }\partial)$ is a perfect group. 
 \end{thm}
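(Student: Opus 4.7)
The plan is to combine the c-principle for $\textnormal{\text{Fol}}^{\textnormal{\text{PL}}}_c(-)$ proved in \Cref{MT} with Greenberg's connectivity estimate for $\overline{\mathrm{B}\Gamma_2^{\textnormal{PL}}}$ in \cite{MR1200422} in order to reduce perfectness to a homological vanishing. A discrete group $G$ is perfect if and only if $H_1(\mathrm{B}G;\bZ)=0$, so it suffices to prove
\[
H_1(\mathrm{B}\text{PL}^{\delta}_0(\Sigma,\text{rel }\partial);\bZ)=0.
\]

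First I would consider the homotopy fiber sequence induced by the continuous identity homomorphism
\[
\overline{\mathrm{B}\text{PL}(\Sigma,\text{rel }\partial)}\longrightarrow \mathrm{B}\text{PL}^{\delta}_0(\Sigma,\text{rel }\partial)\longrightarrow \mathrm{B}\text{PL}_0(\Sigma,\text{rel }\partial).
\]
The base is the classifying space of the path-connected topological group $\text{PL}_0(\Sigma,\text{rel }\partial)$, so $\pi_1$ of the base equals $\pi_0(\text{PL}_0(\Sigma,\text{rel }\partial))=0$ and hence $H_1$ of the base vanishes by Hurewicz. The five-term exact sequence for the associated Serre spectral sequence therefore reduces the problem to establishing
\[
H_1\bigl(\overline{\mathrm{B}\text{PL}(\Sigma,\text{rel }\partial)};\bZ\bigr)=0.
\]

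Now I would invoke \Cref{MT}: under the identification $\overline{\mathrm{B}\text{PL}(\Sigma,\text{rel }\partial)}\simeq \textnormal{\text{Fol}}^{\textnormal{\text{PL}}}_c(\Sigma)$, the c-principle gives a homology isomorphism to $\textnormal{\text{Fol}}^{f,\textnormal{\text{PL}}}_c(\Sigma)$, the space of compactly supported sections of the bundle over $\Sigma$ with fiber $\overline{\mathrm{B}\Gamma_2^{\textnormal{PL}}}$. Greenberg's work \cite{MR1200422} supplies additional connectivity beyond the $2$-connectivity coming from Haefliger's argument and the PL Phillips submersion theorem: concretely, I would use his input to conclude that $\overline{\mathrm{B}\Gamma_2^{\textnormal{PL}}}$ is at least $3$-connected. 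Since $\dim\Sigma=2$, standard obstruction theory then forces $\textnormal{\text{Fol}}^{f,\textnormal{\text{PL}}}_c(\Sigma)$ to be simply connected, so its $H_1$ vanishes, and the c-principle transports this vanishing back to $\overline{\mathrm{B}\text{PL}(\Sigma,\text{rel }\partial)}$.

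The main obstacle is extracting the required $3$-connectivity of $\overline{\mathrm{B}\Gamma_2^{\textnormal{PL}}}$ cleanly from Greenberg's paper, since the ingredients already recalled in this paper only produce $2$-connectivity; the key additional input is an Epstein-type perfectness fact for one-dimensional compactly supported PL homeomorphisms, which Greenberg combines with the Mather-Thurston-style delooping in the PL setting to gain one degree of connectivity. Once this connectivity is in hand, the Serre spectral sequence combines the vanishing of $H_1$ of the fiber with the vanishing of $H_1$ of the base to yield $H_1(\mathrm{B}\text{PL}^{\delta}_0(\Sigma,\text{rel }\partial);\bZ)=0$, proving the perfectness of $\text{PL}^{\delta}_0(\Sigma,\text{rel }\partial)$.
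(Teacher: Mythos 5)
The paper itself does not prove this theorem here; it cites \cite[Theorem 1.4]{nariman2022flat} as the source and only recalls that the proof combines the c-principle from \Cref{MT} with Greenberg's work \cite{MR1200422}. Your skeleton matches what the paper suggests: reduce perfectness to $H_1(\mathrm{B}\text{PL}^{\delta}_0(\Sigma,\text{rel }\partial);\bZ)=0$, pass via the fiber sequence to $H_1(\overline{\mathrm{B}\text{PL}(\Sigma,\text{rel }\partial)};\bZ)=0$, then apply the PL c-principle and obstruction theory. Those reductions are correct (including the observation that $\overline{\mathrm{B}\text{PL}}\simeq\overline{\mathrm{B}\text{PL}_0}$, which you use implicitly and which holds because both sit in fiber sequences over $\mathrm{B}\pi_0\text{PL}$). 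The genuine gap, which you yourself flag, is the step that is supposed to do the real work: the claimed $3$-connectivity of $\overline{\mathrm{B}\Gamma_2^{\textnormal{PL}}}$.

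As stated, your mechanism for obtaining that $3$-connectivity is circular. Via the c-principle for $\bR^2$, $\pi_3(\overline{\mathrm{B}\Gamma_2^{\textnormal{PL}}})\cong\pi_1(\Omega^2\overline{\mathrm{B}\Gamma_2^{\textnormal{PL}}})\cong H_1(\overline{\BdPL_c(\bR^2)};\bZ)$, so asserting $3$-connectivity of $\overline{\mathrm{B}\Gamma_2^{\textnormal{PL}}}$ is essentially asserting $H_1=0$ for the homotopy fiber in the disk case, which is exactly the perfectness-type vanishing one is trying to prove. Moreover, the input you invoke — Epstein's perfectness of $\text{PL}_c(\bR)^{\delta}$ together with a ``Mather--Thurston-style delooping'' — lives in codimension one: it improves the connectivity of $\overline{\mathrm{B}\Gamma_1^{\textnormal{PL}}}$ from $1$ to $2$, but there is no direct delooping that transports this to $\overline{\mathrm{B}\Gamma_2^{\textnormal{PL}}}$ (the suspension comparison between codimensions is only a map, not an equivalence, even in the smooth case). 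So the one-dimensional Epstein result does not supply $\pi_3(\overline{\mathrm{B}\Gamma_2^{\textnormal{PL}}})=0$. What Greenberg's paper actually contributes — an independent, non-circular computation or structural result about the codimension-two PL Haefliger space that breaks this equivalence — is the substantive content of the proof, and your proposal does not correctly identify or supply it. Until that input is pinned down, the argument is incomplete.
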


 \section{Further discussion}
 \subsection{Space of functions not having certain singularities} It would be interesting to see if the space of smooth functions on $M$ not having certain singularities satisfies the fragmentation property.  In particular, it would give a different proof of Vassiliev's c-principle theorem (\cite[Section $3$]{MR1168473}) using the fragmentation method. Let $S$ be a closed semi-algebraic subset of the jet space $J^r(\bR^n;\bR)$ of codimension $n+2$ which is invariant under the lift of $\Diff(\bR^n)$ to the jet space. We denote the space of functions over $M$ avoiding the singularity set $S$ by $F(M,S)$. It is easy to check that  $F$ is a good functor satisfying the conditions \ref{condi}. To prove that $F$ satisfies the c-principle, we need to check whether the functors $F$ and $F^f$ satisfy the fragmentation property. It seems plausible to the author, as we shall explain, that using appropriate transversality argument for stratified manifolds ought to prove fragmentation property for $F(M)$ but since we still do not know if fragmenting the space of functions $F(M)$ is independently interesting, we do not pursue it further in this paper. 
 
 Recall, to check that the space of formal sections $F^f$ has fragmentation property, we have to show that $F(\bR^n,S)$ is at least $(n-1)$-connected. But it is easy to see that $F(\bR^n,S)$ is homotopy equivalent $J^r(\bR^n,\bR)\backslash S$ (see \cite[Lemma 5.13]{kupers2017three}) and this space by Thom's jet transversality is at least even $n$-connected. Therefore, the space of formal sections $F^f$ has the fragmentation property. 
 
 It is still not clear to the author how to check whether $F$ has the fragmentation property but here is an idea inspired by the fragmentation property for foliations. We want to solve the following lifting problem up to homotopy.
  \[ 
\begin{tikzpicture}[node distance=2cm, auto]
  \node (A) {$P$};
  \node (B) [right of=A] {$F_{\epsilon}(M,S)$};
  \node (C) [below of=A, node distance=1.5cm] {$Q$};  
  \node (D) [below of=B, node distance=1.5cm] {$ F(M,S),$};
  \draw[->, dashed] (C) to node  {$$} (B);
  \draw[->] (C) to node {$g$} (D);
  \draw [right hook->] (A) to node {}(C);
  \draw [->] (A) to node {$$} (B);
  \draw [right hook->] (B) to node {$$} (D);
\end{tikzpicture}
\]
where $Q$ is a simplicial complex and $P$ is a subcomplex. Let $\sigma\subset Q$ be a simplex. We can think of the restriction of $g$ to each $\sigma$ by adjointness as a map $g\colon M\times \sigma\to \bR$. In \Cref{thurston}, we defined the fragmentation homotopy $H_1\colon M\times \sigma\to M\times \sigma$ after fixing a partition of unity $\{\mu_i\}_{i=1}^N$. We have the flexibility to choose this partition of unity. Note that for each point ${\bold t}\in \sigma$, the space $H_1(M\times\{{\bold t}\})$ is diffeomorphic to $M$ (see \Cref{frag}). So the restriction of the map $g$ to this space gives a smooth function on $M$. By jet transversality, we can choose the triangulation of $Q$ fine enough so that for each simplex $\sigma$ and each point ${\bold t}\in \sigma$, the restriction of $g$ to $H_1(M\times\{{\bold t}\})$ avoids the singularity type $S$. 

Let $f_0$ be a function in $F(M,S)$ that we fix as a base section to define the support of other functions with respect to $f_0$. Similar to the proof of \Cref{fragment}, consider the subcomplex $L(\sigma)$ which is an $n$-dimensional subcomplex of $M\times \sigma$ which is the union of finitely many manifolds $L_i$ that are canonically diffeomorphic to $M$. In fact $L(\sigma)$ is a union of the graphs of finitely many functions $M\to \sigma$ inside $M\times \sigma$. It is easy to choose the partition of unity so that $L(\sigma)$ is a stratified manifold. The goal is to find a homotopy of the family of functions in $F(M,S)$ denoted by $g_s\colon M\times \sigma\to \bR$ so that $g_0=g$, and $g_1$ restricted to $H_1(M\times\{{\bold t}\})$ for each ${\bold t}\in \sigma$ is in $F(M,S)$, and most importantly the restriction of $g_1$ to each $L_i$ is given by the base function $f_0$. If we can find such homotopy then the rest of the proof is similar to proving fragmentation property for space of sections in \Cref{fragment}.  
%
\subsection{Foliation preserving diffeomorphism groups} Another interesting transverse structure is the foliation preserving case when we have a flag of foliations. To explain the functor in this case, let $M$ be a smooth $n$-dimensional manifold and $\mathcal{F}$ be a codimension $q$ foliation on $M$. Let $\text{Fol}_c(M, \mathcal{F})$ be the realization of the simplicial set whose $k$-simplices are given by the set of codimension $\text{dim}(M)$ foliations on $M \times \Delta^k$ that are transverse to the fibers of the projection $M \times \Delta^k\to \Delta^k$ and the holonomies are compactly supported diffeomorphisms of the fiber $M$ that preserve the leaves of $\mathcal{F}$.\footnote{holonomies must be leaf preserving. One can also define a version that holonomies may not preserve the leaves but they preserve the foliation. By a recent result in \cite{meersseman2018automorphism}, this version does not satisfy the fragmentation property in general.}

 To define the space of formal sections in this case, note that the foliation $\mathcal{F}$ on $M$ gives a lifting of the classifying map for the tangent bundle of $M$ to $\mathrm{B}\Gamma_q\times \mathrm{BGL}_{n-q}(\bR)$ where $\Gamma_q$ is the Haefliger groupoid of germs of diffeomorphisms $\bR^q$. Now consider the diagram
 \begin{equation}\label{eq7}
\begin{gathered}
 \begin{tikzpicture}[node distance=2cm, auto]
  \node (C)  {$M$};  
  \node (D) [right of=C, node distance=3cm] {$\mathrm{B}\Gamma_q\times \mathrm{BGL}_{n-q}(\bR),$};
    \node (B)  [above of=D, node distance=1.3 cm] {$\mathrm{B}\Gamma_{n,q}$};

  \draw[->] (C) to node {$\tau_{\mathcal{F}}$} (D);
  \draw [->] (B) to node {$\theta$} (D);
\end{tikzpicture}
\end{gathered}
\end{equation}
where $\Gamma_{n,q}$ is the subgroupoid $\Gamma_n$ given by germs of diffeomorphisms of $\bR^n$ that preserve the standard codimension $q$ foliation on $\bR^n$ (see \cite[Section 1.1]{MR3477863} for more details). Let $\overline{\mathrm{B}\Gamma}_{n,q}$ denote the homotopy fiber of $\theta$. Let $\textnormal{\text{Fol}}^f_c(M, \mathcal{F})$ be the space of lifts of $\tau_{\mathcal{F}}$ to $\mathrm{B}\Gamma_{n,q}$ up to homotopy.\footnote{The classifying spaces in the diagram \ref{eq7} are defined up to homotopy but if we fix models for them so that $\theta$ is a Serre fibration, $\textnormal{\text{Fol}}^f_c(M, \mathcal{F})$ is homotopy equivalent to the space of lifts of $\tau_{\mathcal{F}}$ along $\theta$ in that model.} Since the trivial $M$-bundle $\text{Fol}_c(M, \mathcal{F})\times M$ is the universal trivial foliated $M$-bundle whose holonomy preserves the leaves of $\mathcal{F}$, we have a homotopy commutative diagram
 \begin{equation*}\label{eq8}
\begin{gathered}
 \begin{tikzpicture}[node distance=1.6cm, auto]
  \node (A) {$\text{Fol}_c(M, \mathcal{F})\times M$};
  \node (C)  [below of=A]{$M$};  
  \node (D) [right of=C, node distance=3cm] {$\mathrm{B}\Gamma_q\times \mathrm{BGL}_{n-q}(\bR).$};
    \node (B)  [right  of=A, node distance=3 cm] {$\mathrm{B}\Gamma_{n,q}$};

  \draw[->] (C) to node {$\tau_{\mathcal{F}}$} (D);
  \draw [->] (A) to node {$$} (B);
    \draw [->] (A) to node {$$} (C);
  \draw [->] (B) to node {$\theta$} (D);
\end{tikzpicture}
\end{gathered}
\end{equation*}
The adjoint of the top horizontal map induces the map $\text{Fol}_c(M, \mathcal{F}))\to \text{Fol}^f_c(M, \mathcal{F})$. The method of this paper can be applied to show that $\text{Fol}_c(M, \mathcal{F})$ also satisfies the c-principle but we pursue this direction and its consequences elsewhere.

\section{Appendix}
\label{sec2}

In this section, we prove \Cref{nonabelian} using  Thurston's fragmentation of section spaces. The non-abelian Poincar{\' e} duality has been proved by various methods (see \cite{lurie2016higher, segal1973configuration, MR0358766, MR922926, MR1851264, MR3431668}). What makes the fragmentation property more useful in the geometric context in particular in foliation theory is that it lets us deform certain spaces associated with a manifold (e.g. section spaces and spaces of foliation with certain transverse structures)  to its subspace (instead of a homotopy colimit) that has a natural filtration (e.g. it deforms the section space to those sections whose supports have a volume less than $\epsilon$). 

The non-abelian Poincar{\' e} duality holds for topological manifolds with the same statement. But we are assuming (\Cref{complete}) that $M$ admits a  metric for which there exists $\epsilon>0$ such that all balls of radius $\epsilon$ are geodesically convex. Therefore, we give proof using the fragmentation method under this assumption. 

Let us recall the setup again. We have a Serre fibration $\pi\colon E\to M$ over such manifold $M$. Let $s_0$ be a fixed section for $\pi$ and we call the base section. We assume this base section satisfies the condition in \Cref{goodsection}. 
\begin{cond*}\label{goodsection} There is a fiber preserving homotopy $h_t$ of $E$ such that $h_0=\text{id}$ and $h_1^{-1}(s_0(M))$ is a neighborhood $V$ of $s_0(M)$ in $E$ and $h_t(s_0(M))=s_0(M)$ for all $t$, in other words the base section is a {\it good} base point in the space of sections.  We assumed that $M$ is a geodesic space and there exists a positive $\epsilon$ so that every ball of radius $\epsilon$ is geodesically convex.
\end{cond*}
So with respect to this base section, we can define the support for any other section as in \Cref{complete}. Let $\text{Sect}_c(\pi)$ be the space of compactly supported sections of $\pi$ equipped with the compact-open topology. Let $\text{Sect}_{\epsilon}(\pi)$ denote the subspace of those sections $s$ such that the support of $s$ can be covered by $k$ geodesically convex balls of radius $2^{-k}\epsilon$ for some positive integer $k$. Recall that $\text{Disj}(M)$ is the poset of the open subsets of $M$ that are homeomorphic to a disjoint union of finitely many open disks. And for an open set $U\in \text{Disj}(M)$, the space $\text{Sect}_c(U)$ denotes the subspace of sections which are compactly supported and their supports are covered by $U$. The non-abelian Poincar{\' e} duality says that if the fiber of the map $\pi$ is $(n-1)$-connected, the natural map
\[
\underset{U\in\text{Disj}(M)}{\textsf{hocolim }} \text{\textnormal{Sect}}_c(U)\to \text{\textnormal{Sect}}_c(\pi),
\]
is a weak homotopy equivalence. 

To prove this statement, we shall recall below the reformulation due to Lurie  \cite[Proposition 5.5.2.13]{lurie2016higher} in terms of a more flexible indexing category ${\bf D}(M)$. And to use the fragmentation method, we shall first describe  $\text{Sect}_{\epsilon}(\pi)$ as a homotopy colimit over the category $\mathcal{O}_{\epsilon}(M)$. Recall from \Cref{O}, that this category is the discrete poset of open subsets of $M$ that can be covered by a union of $k$ geodesically convex balls of radius at most $2^{-k}\epsilon$ for some positive integer $k$.


Recall that by the fragmentation method (\Cref{fragment}), we know that the inclusion $\text{Sect}_{\epsilon}(\pi)\hookrightarrow \text{Sect}_{c}(\pi)$ is a weak homotopy equivalence. Hence, we want to compare $\text{Sect}_{\epsilon}(\pi)$ with $\underset{U\in\text{Disj}(M)}{\textsf{hocolim }} \text{\textnormal{Sect}}_c(U)$, and to do so, we shall define some auxiliary spaces.

\begin{defn}\label{lax}
We define the lax support of a section $s\in \text{Sect}_c(\pi)$ to be the closure of the set of points $x$ where $s(x)$ is not in the neighborhood $V$ that is chosen in the condition above. 
\end{defn}
\begin{defn}
Let $\widetilde{\text{Sect}}_{c}(\pi)$ be the subspace of space of sections whose lax support is compact and in general, for an open set $U$ in $M$, let $\widetilde{\text{Sect}}_{c}(U)$ denote the space whose lax support is compact and lies inside $U$. Also, let $\widetilde{\text{Sect}}_{\epsilon}(\pi)$ be the subspace of $\widetilde{\text{Sect}}_{c}(\pi)$ consisting of those sections whose lax support can be covered by  $k$ geodesically convex balls of radius $2^{-k}\epsilon$ for some positive integer $k$. And similarly, let  $\widetilde{\text{Sect}}_{\epsilon}(U)$ denote the subspace of $\widetilde{\text{Sect}}_{\epsilon}(\pi)$ consisting of those sections whose lax supports can also be covered by $U$. 
\end{defn}
\begin{lem}\label{lax}
For an open set $U$ in $M$, the inclusion $\text{\textnormal{Sect}}_{c}(U)\hookrightarrow \widetilde{\text{\textnormal{Sect}}}_{c}(U)$ is a weak homotopy equivalence and similarly, the inclusion $\text{\textnormal{Sect}}_{\epsilon}(U)\hookrightarrow \widetilde{\text{\textnormal{Sect}}}_{\epsilon}(U)$ is a weak homotopy equivalence. 
\end{lem}
\begin{proof}
The proof of both statements is the same so we shall do the first. We need to solve the following lifting problem
\[ 
\begin{tikzpicture}[node distance=2cm, auto]
  \node (A) {$S^k$};
  \node (B) [right of=A] {$\text{\textnormal{Sect}}_{c}(U)$};
  \node (C) [below of=A, node distance=1.5cm] {$ D^{k+1}$};  
  \node (D) [below of=B, node distance=1.5cm] {$ \widetilde{\text{\textnormal{Sect}}}_{c}(U).$};
  \draw[->, dashed] (C) to node  {$$} (B);
  \draw[->] (C) to node {$H$} (D);
  \draw [right hook->] (A) to node {}(C);
  \draw [->] (A) to node {$F$} (B);
  \draw [->] (B) to node {$$} (D);
\end{tikzpicture}
\]
But instead we change the map of pairs $$(H,F): (D^{k+1}, S^k)\to (\widetilde{\text{\textnormal{Sect}}}_{c}(U), \text{\textnormal{Sect}}_{c}(U)),$$ up to homotopy to find the lift. For $a\in D^{k+1}$ and $x\in M$, we define $H_t(a,x)\in E$ to be $h_t(H(a,x))$. Similarly, we define $F_t$. Note that for all $a\in D^{k+1}$, the section $H_1(a,-)$ in fact lies in $\text{\textnormal{Sect}}_{c}(U)$ which is our desired lift.
\end{proof}

\begin{lem}\label{micro}
The natural map
\[
\eta\colon\underset{U\in\mathcal{O}_{\epsilon}(M)}{\textsf{hocolim }}  \widetilde{\text{\textnormal{Sect}}}_{\epsilon}(U)\to  \widetilde{\text{\textnormal{Sect}}}_{\epsilon}(\pi),
\]
is a weak homotopy equivalence. 
\end{lem}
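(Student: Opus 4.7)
My plan is to realize the homotopy colimit as the geometric realization of a semi-simplicial space and exhibit the augmentation $\alpha$ as a microfibration with weakly contractible fibers, then invoke the same machinery used in the proof of \Cref{resolution}, namely \cite[Proposition 2.8]{galatius2018homological}.

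Concretely, I would model $\underset{U\in\mathcal{O}_{\epsilon}(M)}{\textsf{hocolim }}\widetilde{\text{Sect}}_{\epsilon}(U)$ as $\|X_{\bullet}\|$, where $X_q$ is the subspace of $\widetilde{\text{Sect}}_{\epsilon}(\pi)\times \mathcal{O}_{\epsilon}(M)^{q+1}$ consisting of tuples $(s;\,U_0\subseteq U_1\subseteq\cdots\subseteq U_q)$ with $s\in\widetilde{\text{Sect}}_{\epsilon}(U_0)$. Since $\mathcal{O}_{\epsilon}(M)$ is discrete by \Cref{O}, this is a genuine semi-simplicial space and its realization computes the homotopy colimit. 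The augmentation forgetting the chain $U_\bullet$ is precisely $\alpha$.

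To verify that $\alpha$ is a microfibration, the key point is that each $\widetilde{\text{Sect}}_{\epsilon}(U)\hookrightarrow\widetilde{\text{Sect}}_{\epsilon}(\pi)$ is an open embedding. Indeed, by \Cref{cond} the subset $V\subset E$ from which the lax support is measured is open, so the condition ``$s(x)\in V$ for all $x\notin U$'' cuts out an open subset of $\widetilde{\text{Sect}}_{\epsilon}(\pi)$ in the compact-open topology. Given a $q$-simplex $(s_0,U_\bullet)$ lifting a path $s_t$ downstairs, the constant chain $(s_t,U_\bullet)$ therefore provides a valid lift on a short time interval, which is exactly the hypothesis of \cite[Proposition 2.8]{galatius2018homological}.

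For contractibility of the fiber over $s\in\widetilde{\text{Sect}}_{\epsilon}(\pi)$, I would identify this fiber with the classifying space of the sub-poset $\mathcal{O}_{\epsilon}(M;s):=\{U\in\mathcal{O}_{\epsilon}(M):\text{lax supp}(s)\subset U\}$ and show it is cofiltered. Writing $K$ for the compact lax support of $s$, given $U_1,U_2\in\mathcal{O}_{\epsilon}(M;s)$, the set $U_1\cap U_2$ is an open neighborhood of $K$; since $K$ is compact, for $k$ sufficiently large I can cover $K$ by $k$ geodesically convex balls of radius at most $2^{-k}\epsilon$ whose closures are contained in $U_1\cap U_2$, and their union gives an element of $\mathcal{O}_{\epsilon}(M;s)$ lying below both $U_i$. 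The main obstacle I anticipate is precisely this cofilteredness check: the admissibility constraint in \Cref{O} couples the number of balls $k$ to their maximum radius $2^{-k}\epsilon$, so one must be careful that shrinking balls to fit inside $U_1\cap U_2$ does not force $k$ to grow out of reach — which it does not, since $K$ is compact and can be covered by arbitrarily small balls of sufficiently large number. Once cofilteredness is established the nerve is weakly contractible, and the microfibration-with-contractible-fibers conclusion of \cite[Proposition 2.8]{galatius2018homological} yields the weak equivalence.
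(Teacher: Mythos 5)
Your overall approach matches the paper's: you realize the homotopy colimit as the bar construction $B_{\bullet}(\widetilde{\text{Sect}}_{\epsilon}(-), \mathcal{O}_{\epsilon}(M), *)$ (your semi-simplicial $X_\bullet$ is precisely this model), exhibit the augmentation $\alpha$ as a Serre microfibration using the open-embedding criterion of \cite[Proposition~2.8]{galatius2018homological} together with Weiss's microfibration lemma, and then argue that the fibers are weakly contractible by identifying them with nerves of posets. The first two steps agree with the paper essentially verbatim, including the observation that the lax-support definition makes $\widetilde{\text{Sect}}_{\epsilon}(U)\hookrightarrow\widetilde{\text{Sect}}_{\epsilon}(\pi)$ an open inclusion.

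Where you diverge from the paper, and where there is a genuine gap, is in the fiber-contractibility step. The paper asserts that the sub-poset $\mathcal{O}_\epsilon(M)|_{\mathrm{supp}(s)}$ is \emph{filtered} (upward directed), whereas you claim it is \emph{cofiltered} (downward directed). Your justification of cofilteredness does not hold up: the assertion that ``for $k$ sufficiently large I can cover $K$ by $k$ geodesically convex balls of radius at most $2^{-k}\epsilon$ whose closures are contained in $U_1\cap U_2$,'' together with the closing remark ``which it does not, since $K$ is compact and can be covered by arbitrarily small balls of sufficiently large number,'' does not engage with the actual coupling in \Cref{O}. Of course one can cover $K$ by $N$ balls of any radius $\delta>0$, with $N\to\infty$ as $\delta\to 0$; but admissibility demands simultaneously $N\leq k$ \emph{and} $\delta\leq 2^{-k}\epsilon$, and these pull hard in opposite directions. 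The number of $\delta$-balls needed to cover a fixed nondegenerate compact $K$ grows on the order of $\delta^{-n}$, which is polynomial, while the allowed radius $2^{-k}\epsilon$ shrinks exponentially in $k$; so for $k$ large one has $N\gg k$ rather than $N\leq k$. In other words, ``for $k$ sufficiently large'' is exactly the wrong quantifier --- only a bounded window of scales is available, and whether any admissible cover of $K$ fits inside $U_1\cap U_2$ depends on the Lebesgue number of $K$ in that intersection, which your argument does not control. You should either supply this control (using that $s\in\widetilde{\text{Sect}}_\epsilon(\pi)$ already gives one admissible cover, and quantitatively estimating how much room there is to shrink it) or align with the paper and argue filteredness instead.
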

\begin{proof}
Note that by definition the subspaces $\widetilde{\text{\textnormal{Sect}}}_{\epsilon}(U)$ are open in $\widetilde{\text{\textnormal{Sect}}}_{\epsilon}(\pi)$ and they give an open cover as $U$ varies in $\mathcal{O}_{\epsilon}(M)$. So the lemma is implied by \cite[Theorem 1.1]{MR2045835} but it is also easily implied by the microfibration technique as follows. It is enough to show that the above map is a Serre microfibration with weakly contractible fibers (see \cite[Lemma 2.2]{weiss2005does}).  Recall, we say the map $\pi: T\to B$ is a Serre microfibration if for every positive integer $k$ and every commutative diagram
\[ 
\begin{tikzpicture}[node distance=2cm, auto]
  \node (A) {$D^k\times \{0\}$};
  \node (B) [right of=A] {$T$};
  \node (C) [below of=A, node distance=1.4cm] {$ D^{k}\times [0,1]$};  
  \node (D) [below of=B, node distance=1.4cm] {$B,$};
  \draw[->] (C) to node {$h$} (D);
  \draw [right hook->] (A) to node {}(C);
  \draw [->] (A) to node {$f$} (B);
  \draw [->] (B) to node {$\pi$} (D);
\end{tikzpicture}
\]
there exists an $\epsilon>0$ and  a (micro)lift $H:D^k\times [0,\epsilon)\to T$ so that $H(x,0)=f(x)$ and $\pi\circ H(x,t)=h(x)$. 

We think of $\widetilde{\text{\textnormal{Sect}}}_{\epsilon}(-): \mathcal{O}_{\epsilon}(M)\to {\bf Top}$ as a diagram of spaces. It is known (see \cite[Appendix A]{MR2045835}) that for the diagram of spaces, the homotopy colimit is weakly equivalent to the realization of the bar construction $B_{\bullet}(\widetilde{\text{\textnormal{Sect}}}_{\epsilon}(-), \mathcal{O}_{\epsilon}(M), *)$. Note  that there is a continuous degree-wise injective map of semi-simplicial spaces
\[
B_{\bullet}(\widetilde{\text{\textnormal{Sect}}}_{\epsilon}(-), \mathcal{O}_{\epsilon}(M), *)\to \widetilde{\text{\textnormal{Sect}}}_{\epsilon}(\pi)\times B_{\bullet}(*, \mathcal{O}_{\epsilon}(M), *),
\]
where the map $\eta$ in the lemma is induced by the projection to the first factor. The lax support is defined so that the subspace $\widetilde{\text{\textnormal{Sect}}}_{\epsilon}(U)$ is open in $\widetilde{\text{\textnormal{Sect}}}_{\epsilon}(\pi)$ and since these spaces are Hausdorff, we could use \cite[Proposition 2.8]{galatius2018homological} to deduce that the map
\[
|B_{\bullet}(\widetilde{\text{\textnormal{Sect}}}_{\epsilon}(-), \mathcal{O}_{\epsilon}(M), *)|\to \widetilde{\text{\textnormal{Sect}}}_{\epsilon}(\pi),
\]
is a Serre microfibration. The fiber over a section $s\in \widetilde{\text{\textnormal{Sect}}}_{\epsilon}(\pi)$ can be identified with $|B_{\bullet}(*, \mathcal{O}_{\epsilon}(M)|_{\text{supp}(s)}, *)|$ where $\mathcal{O}_{\epsilon}(M)|_{\text{supp}(s)}$ consists of those open subsets in $\mathcal{O}_{\epsilon}(M)$ that contains the support of $s$. But this sub-poset is filtered, therefore its realization is contractible. 
\end{proof}
So using these spaces instead, we want to prove that 
\[
\underset{U\in\text{Disj}(M)}{\textsf{hocolim }} \widetilde{\text{\textnormal{Sect}}}_c(U)\to \widetilde{\text{\textnormal{Sect}}}_c(\pi),
\]
is a weak homotopy equivalence. However, the fragmentation method (\Cref{fragment}) implies that the inclusion $\widetilde{\text{\textnormal{Sect}}}_{\epsilon}(U)\hookrightarrow \widetilde{\text{\textnormal{Sect}}}_c(U)$ is a homotopy equivalence so we need to prove that the map 
\begin{equation}\label{w}
\begin{gathered}
\underset{U\in\text{Disj}(M)}{\textsf{hocolim }} \widetilde{\text{\textnormal{Sect}}}_{\epsilon}(U)\to \widetilde{\text{\textnormal{Sect}}}_c(\pi),
\end{gathered}
\end{equation}
is a weak homotopy equivalence.

On the other hand, combining \Cref{fragment} with \Cref{micro} and \Cref{lax}, we have the weak homotopy equivalence
\[
\underset{U\in\mathcal{O}_{\epsilon}(M)}{\textsf{hocolim }}  \widetilde{\text{\textnormal{Sect}}}_{\epsilon}(U)\xrightarrow{\simeq} \widetilde{\text{\textnormal{Sect}}}_{c}(\pi).
\]
Let us first observe that there is a functor $$\text{conv}\colon \mathcal{O}_{\epsilon}(M) \to \text{Disj}(M),$$ defined as follows. Recall that every open set $U$ in $\mathcal{O}_{\epsilon}(M)$ can be covered by a union of $k$ geodesically convex balls of radius at most $2^{-k}\epsilon$ for some positive integer $k$. 
\begin{lem} A union of $k$ geodesically convex balls of radius at most $2^{-k}\epsilon$ can be covered by at most $k$ {\it disjoint} geodesically convex balls of radius at most $\epsilon$. 
\end{lem} 
\begin{proof}Note that if the union of $r$ geodesically convex balls of radius at most $2^{-k}\epsilon$ is connected, it can be covered by a ball of radius at most $2^{-k+r-1}\epsilon$. So we consider the connected components of the union of $k$ balls of radius at most $2^{-k}\epsilon$ and we inductively cover the connected components by bigger balls if necessary until we obtain at most $k$ disjoint balls of radius at most $\epsilon$. 
\end{proof}
Let $\text{conv}(U)$ be the union of convex hulls of the connected components which is  homeomorphic to the disjoint union of balls in $M$. Hence, $\text{conv}(U)$ gives an object in $\text{Disj}(M)$ and it respects the containment so it is a functor between the two posets. Hence, we obtain a map
\[
\beta\colon\underset{U\in\mathcal{O}_{\epsilon}(M)}{\textsf{hocolim }}  \widetilde{\text{\textnormal{Sect}}}_{\epsilon}(U)\to \underset{U\in\text{Disj}(M)}{\textsf{hocolim }} \widetilde{\text{\textnormal{Sect}}}_{\epsilon}(U).
\]
Hence, to prove the non-abelian Poincar{\' e} duality for space of sections over $M$, it is enough to prove the following
\begin{thm}\label{colim}
The map $\beta$ induces a weak homotopy equivalence.
\end{thm}
In other words, we want to compare the homotopy colimit of two diagrams of section spaces over indexing categories $\mathcal{O}_{\epsilon}(M)\to \text{Disj}(M)$. To do that, we need a more flexible indexing category and the reformulation of the non-abelian Poincar{\' e} duality by Lurie (\cite[Theorem 5.5.6.6]{lurie2016higher}) in terms of this more flexible $\infty$-category.

\begin{defn}
Let $\mathsf{Mfld}_n$ denote the topological category of  $n$-dimensional topological manifolds and the morphisms are given by space of the codimension zero embeddings. We let ${\bf D}(M)$ to be the full subcategory of the $\infty$-category of $\text{\textnormal{N}}(\mathsf{Mfld}_n)_{/M}$ spanned
by those objects of the form $j\colon S\times \bR^n\to M$ where $S$ is a finite set. Here $\text{\textnormal{N}}(-)$ means the homotopy coherent nerve of the category (see \cite[ Definition 5.5.2.11]{lurie2016higher}). \end{defn} 
The space of morphisms $\text{Map}_{{\bf D}(M)}(f,g)$ between two objects  embeddings $(f: U\hookrightarrow M)$ and $(g: V\hookrightarrow M)$ in ${\bf D}(M)$ can be described by the following homotopy fiber sequence
\[
\text{Map}_{{\bf D}(M)}(f,g)\to \text{Sing}(\text{Emb}(U,V))\to \text{Sing}(\text{Emb}(U,M)),
\]
where the last map is induced by precomposing with $g$ and $\text{Sing}$ means the singular simplicial set. So roughly, we think of $\text{Map}_{{\bf D}(M)}(f,g)$ as the space of pairs of embeddings $(\iota, f)$ in $ \text{Emb}(U,V)$ and  $\text{Emb}(U,M)$ respectively and a specified isotopy in $\text{Emb}(U,M)$ between $f$ and  $g\circ\iota$.

Lurie in \cite[Remark 5.5.2.12]{lurie2016higher} defines an $\infty$-functor from the nerve of $\text{Disj}(M)$ to ${\bf D}(M)$
\[
\gamma\colon \text{N}(\text{Disj}(M))\to {\bf D}(M),
\]
by choosing a parametrization of each open disk in $M$. And in \cite[Proposition 5.5.2.13]{lurie2016higher}, he showed that the functor $\gamma$ is left cofinal. Hence, the colimits of diagrams over these $\infty$-categories are homotopy equivalent. The same argument as we shall sketch shows that the composition of functors
\[
\alpha\colon\text{N}(\mathcal{O}_{\epsilon}(M))\xrightarrow{\text{conv}}\text{N}(\text{Disj}(M))\xrightarrow{\gamma} {\bf D}(M),
\]
is also cofinal.
%
\begin{proof}
So we are left to show that $\alpha=\gamma\circ\text{conv}$ is also left cofinal similar to proposition \cite[Proposition 5.5.2.13]{lurie2016higher}. Let $V\in {\bf D}(M)$ and  ${\bf D}(M)_{V/}$ is the slice category under V. By Joyal's theorem \cite[Theorem 4.1.3.1]{MR2522659},  it is enough to show that $\text{N}(\mathcal{O}_{\epsilon}(M))\times_{{\bf D}(M)} {\bf D}(M)_{V/}$ is weakly contractible.  The projection $\text{N}(\mathcal{O}_{\epsilon}(M))\times_{{\bf D}(M)} {\bf D}(M)_{V/}\to \text{N}(\mathcal{O}_{\epsilon}(M))$ is a left fibration associated to a functor $\chi: \text{N}(O_\epsilon(M))\to {\bf Top}$ which sends $U\in \text{N}(\mathcal{O}_{\epsilon}(M))$ to the homotopy fiber of the map 
\[
\text{Sing}(\text{Emb}(V,\alpha(U)))\to \text{Sing}(\text{Emb}(V,M)).
\]
Hence, by \cite[Proposition 3.3.4.5]{MR2522659}, it is enough to show that 
\begin{equation}\label{eq:1}
\underset{ \text{N}(\mathcal{O}_{\epsilon}(M))}{\textsf{colim }}\text{Sing}(\text{Emb}(V,\alpha(-)))\xrightarrow{\simeq} \text{Sing}(\text{Emb}(V,M)),
\end{equation}
is a weak equivalence. Suppose that $V$ is homeomorphic to $S\times \bR^n$ for a finite set $S$. For any open subset $U$ in $M$, let $\text{Conf}(S,U)$ denote the space of embeddings of the set $S$ into $U$. Lurie showed (\cite[Remark 5.4.1.11]{lurie2016higher}) that the diagram
\[
 \begin{tikzpicture}[node distance=1.6cm, auto]
  \node (A) {$\text{Sing}(\text{Emb}(S\times \bR^n,U))$};
  \node (C)  [below of=A]{$\text{Sing}(\text{Conf}(S,U))$};  
  \node (D) [right of=C, node distance=5cm] {$\text{Sing}(\text{Conf}(S,M)),$};
    \node (B)  [right  of=A, node distance=5 cm] {$\text{Sing}(\text{Emb}(S\times \bR^n,M))$};

  \draw[->] (C) to node {$$} (D);
  \draw [->] (A) to node {$$} (B);
    \draw [->] (A) to node {$$} (C);
  \draw [->] (B) to node {$$} (D);
\end{tikzpicture}
\]
where the vertical maps are given by evaluation at $0$, is a homotopy cartesian diagram. Hence, the weak equivalence in \ref{eq:1} is equivalent to proving
\[
\underset{\text{N}(\mathcal{O}_{\epsilon}(M))}{\textsf{colim }}\text{Sing}(\text{Conf}(S, \alpha(-)))\xrightarrow{\simeq} \text{Sing}(\text{Conf}(S,M)).
\]
Note that $\text{Conf}(S,\alpha(U))$ is an open subspace $\text{Conf}(S,M)$ and as $U$ varies in $\mathcal{O}_{\epsilon}(M)$, the open subspaces $\text{Conf}(S,\alpha(U))$ cover $\text{Conf}(S,M)$. So by \cite[Theorem 1.1]{MR2045835}, the above map is a weak equivalence.
\end{proof}

\bibliographystyle{alpha}
\bibliography{reference}
\end{document}